\theoremstyle{plain}
\newtheorem{theorem}{Theorem}[section]
\newtheorem{proposition}[theorem]{Proposition}
\newtheorem{lemma}[theorem]{Lemma}
\newtheorem{remark}[theorem]{Remark}
\newtheorem{example}[theorem]{Example}
\newtheorem{theointro}{Theorem}
\newtheorem*{remintro}{Remark}
\newtheorem{propintro}[theointro]{Proposition}
\newcommand{\Z}{\mathbf{Z}}
\newcommand{\GL}{\mathrm{GL}}
\newcommand{\PGL}{\mathrm{PGL}}
\newcommand{\A}{\mathbf{A}}
\newcommand{\PP}{\mathbf{P}}
\newcommand{\G}{\mathbf{G}}
\newcommand{\Autgp}[1]{\mathbf{Aut}_{#1}}
\renewcommand{\phi}{\varphi}
\newcommand{\fonction}[4]{\left\{\begin{array}[c]{ccc}#1 & \longrightarrow & #2 \\ #3 & \longmapsto & #4 \end{array}\right.}
\newcommand{\Alb}[1]{\mathrm{Alb}(#1)}
\newcommand{\Albz}[1]{\mathrm{Alb}_0(#1)}
\newcommand{\Albu}[1]{\mathrm{Alb}_1(#1)}
\newcommand{\alb}[1]{a_{#1}}
\newcommand{\aff}[1]{#1_{\textrm{aff}}}
\newcommand{\ant}[1]{#1_{\textrm{ant}}}
\newcommand{\Ru}[1]{R_u(#1)}
\newcommand{\reg}[1]{#1_{\textrm{reg}}}
\DeclareMathOperator{\id}{id}
\DeclareMathOperator{\Spec}{Spec}
\DeclareMathOperator{\Aut}{Aut}
\DeclareMathOperator{\Hom}{Hom}
\DeclareMathOperator{\End}{End}
\DeclareMathOperator{\pr}{pr}
\DeclareMathOperator{\Gal}{Gal}
\DeclareMathOperator{\codim}{codim}
\DeclareMathOperator{\Ext}{Ext}
\DeclareMathOperator{\Pic}{Pic}
\DeclareMathOperator{\Sym}{Sym}
\DeclareMathOperator{\dom}{dom}
\DeclareMathOperator{\Lie}{Lie}
\DeclareMathOperator{\Vect}{Vect}
\DeclareMathOperator{\Proj}{Proj}
\newlist{mycase}{enumerate}{3}
\setlist[mycase]{align=left,listparindent=\parindent,parsep=\parskip,font=\normalfont\itshape\underline,leftmargin=0pt,labelwidth=0pt,itemindent=.4em,labelsep=.4em,partopsep=0pt,itemsep=\baselineskip}
\setlist[mycase,1]{label=Case~\arabic*.,ref=\arabic*}
\setlist[mycase,2]{label=Subcase~\themycasei.\arabic*.,ref=\themycasei.\arabic*}
\setlist[mycase,3]{label=Subsubcase~\themycaseii.\arabic*.,ref=\themycaseii.\arabic*}
\newlist{mystep}{enumerate}{3}
\setlist[mystep]{align=left,listparindent=\parindent,parsep=\parskip,font=\normalfont\itshape\underline,leftmargin=0pt,labelwidth=0pt,itemindent=.4em,labelsep=.4em,partopsep=0pt,itemsep=\baselineskip}
\setlist[mystep,1]{label=Step~\arabic*.,ref=\arabic*}
\setlist[mystep,2]{label=Case~\themystepi.\arabic*.,ref=\themystepi.\arabic*}
\setlist[mystep,3]{label=Subcase~\themystepii.\arabic*.,ref=\themystepii.\arabic*}
\newcommand*{\da@rightarrow}{\mathchar"0\hexnumber@\symAMSa 4B }
\newcommand*{\da@leftarrow}{\mathchar"0\hexnumber@\symAMSa 4C }
\newcommand*{\xdashrightarrow}[2][]{%
  \mathrel{%
    \mathpalette{\da@xarrow{#1}{#2}{}\da@rightarrow{\,}{}}{}%
  }%
}
\newcommand{\xdashleftarrow}[2][]{%
  \mathrel{%
    \mathpalette{\da@xarrow{#1}{#2}\da@leftarrow{}{}{\,}}{}%
  }%
}
\newcommand*{\da@xarrow}[7]{%
  \sbox0{$\ifx#7\scriptstyle\scriptscriptstyle\else\scriptstyle\fi#5#1#6\m@th$}%
  \sbox2{$\ifx#7\scriptstyle\scriptscriptstyle\else\scriptstyle\fi#5#2#6\m@th$}%
  \sbox4{$#7\dabar@\m@th$}%
  \dimen@=\wd0 %
  \ifdim\wd2 >\dimen@
    \dimen@=\wd2 %
  \fi
  \count@=2 %
  \def\da@bars{\dabar@\dabar@}%
  \@whiledim\count@\wd4<\dimen@\do{%
    \advance\count@\@ne
    \expandafter\def\expandafter\da@bars\expandafter{%
      \da@bars
      \dabar@ 
    }%
  }%
  \mathrel{#3}%
  \mathrel{%
    \mathop{\da@bars}\limits
    \ifx\\#1\\%
    \else
      _{\copy0}%
    \fi
    \ifx\\#2\\%
    \else
      ^{\copy2}%
    \fi
  }%
  \mathrel{#4}%
}
\newcommand\xdashmapsto[2][]{\mathrel{\mapstochar\xdashrightarrow[#1]{#2}}}
\title{Almost homogeneous varieties of Albanese codimension one}
\author{Bruno Laurent
\thanks{Mathematisches Institut, Heinrich-Heine-Universit\"at, 40204 D\"usseldorf, Germany \newline
\url{Bruno.Laurent@hhu.de}}}
\date{}
\begin{document}

\maketitle

\begin{abstract}
We classify almost homogeneous normal varieties of Albanese codimension $1$, defined over an arbitrary field. We prove that such a variety has a unique normal equivariant completion. Over a perfect field, the group scheme of automorphisms of this completion is smooth, except in one case in characteristic $2$, and we determine its (reduced) neutral component.
\end{abstract}

\section*{Introduction}\addcontentsline{toc}{section}{Introduction}

The varieties equipped with an action of an algebraic group and having a dense orbit are said to be almost homogeneous. Their classification is a challenging problem, for which one usually restricts to some subclasses of varieties. This can be done for example by imposing conditions on the acting groups: if we only take algebraic tori, then we get the so-called toric varieties, which have been intensively studied for the past fifty years. Instead of giving restrictions of the groups, we can also require some geometric conditions on the varieties.

For a smooth complex projective variety $X$, by Hodge symmetry the birational invariants $\dim H^1(X,\mathcal{O}_X)$ and $\dim H^0(X,\Omega_X^1)$ are equal, and their common value is called the irregularity (for a curve, this is the genus). It is also the dimension of the Albanese variety $\Alb{X}$, which is an abelian variety with a universal morphism $X \to \Alb{X}$. Over an arbitrary field, these three integers make sense but they need not be equal. Nonetheless the dimension of the Albanese variety ``behaves well'' with respect to group actions, this is why we introduce the new invariant $\dim X - \dim \Alb{X}$, which we call the Albanese codimension.

If $X$ is a smooth projective curve of genus $g$ then $\Alb{X}$ is the Jacobian of $X$, so the Albanese codimension is $1-g$, which can be arbitrarily negatively large. However, if $X$ is an almost homogeneous variety then this is a non-negative integer. The case of Albanese codimension $0$ is easy: then the almost homogeneous varieties are homogeneous and  are exactly the torsors under an abelian variety (see Proposition \ref{prop_codim0}). In this article, we focus on normal almost homogeneous varieties of Albanese codimension $1$, defined over an arbitrary field.

\bigskip
On the one hand, a central idea when studying algebraic varieties $X$ endowed with an action of an algebraic group $G$ is to find equivariant morphisms to ``simpler'' varieties. One of the main tools is the structure of equivariant morphisms to a homogeneous space: an equivariant morphism $X \to G/H$ makes $X$ be an associated fiber bundle (see \cite[Sect. 2.5]{BRI_anv} or Proposition \ref{prop_assoc_fiber_bundle}). If $X$ is almost homogeneous then the Albanese map $X \to \Alb{X}$ is an example of such a morphism, and in our situation its fibers have dimension $1$.

On the other hand, we can look for an equivariant completion $\overline{X}$ of $X$. In the article \cite{AHI2}, and in the setting of smooth complex varieties and complex linear algebraic groups, Dmitry Ahiezer studied the case when the boundary $\overline{X} \setminus X$ is the union of homogeneous divisors. He proved that if such a completion $\overline{X}$ exists, then it is unique up to isomorphism. We obtain an analogous result in Proposition \ref{prop_uniqueness_completion}, for normal varieties and smooth connected algebraic groups over an arbitrary field.

Being of Albanese codimension $1$ has an interpretation in terms of completions: over an algebraically closed field, if a normal variety $X$ is almost homogeneous (and not proper) then its Albanese codimension is $1$ if and only if $X$ admits a normal equivariant completion $\overline{X}$ such that the complement of the open orbit consists of the union of homogeneous divisors which are abelian varieties (see Proposition \ref{prop_equiv_divhom_Alb}). 

Even over an arbitrary field, we can be more precise about the equivariant completions.

\begin{theointro}\label{theo_unique_compl}
Let $X$ be a normal variety of Albanese codimension $1$ and let $G$ be a smooth connected algebraic group. If $X$ is almost homogeneous under an action of $G$ then $X$ has a unique normal equivariant completion $\overline{X}$. Moreover $\overline{X}$ is projective and regular, and the boundary $\overline{X} \setminus X$ consists of zero, one or two (disjoint) $G$-homogeneous divisors.
\end{theointro}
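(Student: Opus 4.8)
The plan is to realise $X$ as an associated fibre bundle over its Albanese variety whose fibres are one-dimensional, to reduce the entire statement to the study of a single homogeneous curve, and then to read off the completion and its boundary from the short list of non-proper homogeneous curves.

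First I would set up the Albanese morphism $\alb{X}\colon X \to \Albu{X}$ and use its equivariance. Since $\alb{X}$ is universal, $G$ acts on $\Albu{X}$ compatibly, the abelian variety $A := \Albz{X}$ acting by translations; the induced homomorphism $G \to A$ has image an abelian subvariety whose translates cover $\alb{X}(X)$, so it is surjective and $\Albu{X}$ is a single $G$-orbit, i.e. a homogeneous space $G/G_1$ with $G_1$ the stabiliser of a base point $o$. By the structure of equivariant morphisms to a homogeneous space (Proposition \ref{prop_assoc_fiber_bundle}) this yields $X \cong G \times^{G_1} F$, where $F := \alb{X}^{-1}(o)$ is the scheme-theoretic fibre, a $G_1$-stable closed subscheme. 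As the Albanese codimension is $1$, $F$ has dimension $1$; since $X$ is normal and, over the smooth base $\Albu{X}$, is fppf-locally a product, the fibre $F$ is normal, hence a regular curve, almost homogeneous under $G_1$.

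Next I would classify $F$. Its open $G_1$-orbit $C$ is a smooth homogeneous curve, and $F$ sits between $C$ and its unique regular projective model $\overline{F}$, the finitely many points of $\overline{F}\setminus C$ being $G_1$-fixed zero-dimensional orbits. Running through the smooth homogeneous curves shows that $\overline{F}\setminus C$ has at most two closed points: for $C$ a form of $\G_a$ a single point at infinity, for $C$ a form of $\G_m$ two points (or a single closed point of degree two over a non-closed field), and for the proper cases ($C$ an elliptic curve or $\PP^1$) none, so that $F=\overline{F}$ is already complete. Setting $\overline{X} := G \times^{G_1} \overline{F}$ produces a $G$-variety containing $X$ as a dense open. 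It is proper, since $\overline{F}$ is proper and $\Albu{X}$ is proper; regular, since $G$ is smooth and $\overline{F}$ is a regular curve; and projective, since $\overline{X} \to \Albu{X}$ carries a line bundle ample relative to the projective base $\Albu{X}$ (the fibres having genus $0$ or $1$). Its boundary is
\[
\overline{X} \setminus X \;=\; G \times^{G_1} (\overline{F} \setminus F) \;=\; \bigsqcup_i \, G \times^{G_1} \{P_i\},
\]
and each $P_i$ being $G_1$-fixed gives $G \times^{G_1} \{P_i\} \cong G/G_1 = \Albu{X}$, a $G$-homogeneous divisor; their number equals the number of points of $\overline{F}\setminus F$, hence $0$, $1$ or $2$, in agreement with Proposition \ref{prop_equiv_divhom_Alb}.

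Finally, for uniqueness I would show that any normal equivariant completion $\overline{X}'$ must coincide with $\overline{X}$. Being proper and normal, $\overline{X}'$ has its own Albanese morphism to $A$, which extends $\alb{X}$ by universality and is $G$-equivariant with the same surjection $G \to A$; thus $\overline{X}' \cong G \times^{G_1} F'$ with $F'$ a normal proper curve containing $F$ densely. A normal proper curve is its own regular projective model, so $F' = \overline{F}$ and $\overline{X}' \cong \overline{X}$; alternatively one invokes the uniqueness of completions with homogeneous-divisor boundary (Proposition \ref{prop_uniqueness_completion}). The main obstacle is the analysis of $F$ over a general, possibly imperfect, base field: one must control the forms of $\G_a$ and $\G_m$ occurring as the open orbit, handle the possible absence of a rational base point $o$ and the non-smoothness of $G_1$ in positive characteristic, and in particular rule out (or locate, as in the exceptional characteristic-$2$ situation) wild forms of $\G_a$ whose regular completion could acquire a singular boundary point and break the regularity of $\overline{X}$. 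These field-of-definition issues, rather than the bundle formalism, are where the real work lies.
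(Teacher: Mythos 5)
Your skeleton is genuinely close to the paper's argument (Albanese fibration, associated fiber bundle, reduction to a homogeneous curve and the at-most-two boundary points from \cite{LAU}, uniqueness via the homogeneous-divisor boundary as in Proposition \ref{prop_uniqueness_completion}), but there is a gap at the step you rely on throughout: the claim that, since $X$ is normal and fppf-locally a product over the smooth base $\Albu{X}$, the fibre $F$ is normal, hence a regular curve. Normality ascends along a faithfully flat morphism only when the fibres of that morphism are normal (e.g.\ when it is smooth), and here $G \times F \to X$ is a torsor under the stabiliser $G_1 = \aff{G} \cdot G_x$, which need not be smooth in positive characteristic. This failure is not hypothetical: precisely in the pseudo-abelian case of Theorem \ref{theo_class_section} (over an imperfect field) the group $\aff{G}$ is non-smooth and the Albanese fibres can be non-reduced, as in the example from \cite[Rem.~3.4]{BRI_anv} recalled in the paper. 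Then $F$ is not a regular curve, your classification of $F$ collapses, and the construction $\overline{X} = G \stackrel{G_1}{\times} \overline{F}$ has no meaning. You flag this at the end as ``where the real work lies,'' but it is the main content, not a patch. The paper handles it twice, in two different ways: for existence it does \emph{not} build the completion fibrewise, but first proves $X$ is quasi-projective by quotienting by Frobenius kernels $G_n$ to reduce to a smooth stabiliser before linearizing an ample bundle on the (now normal) fibre, and then invokes \cite[Th.~2]{BRI_anv}; for the boundary analysis it never asserts the closed fibre $\overline{C}$ is normal --- it shows only that $\overline{C}$ is irreducible (via flatness of $G \times \overline{C} \to \overline{X}$), that the finitely many boundary points are fixed by $\aff{G}$, and that their $G$-orbits are divisors.

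Two downstream statements in your write-up are also too strong. First, you claim each boundary point $P_i$ is $G_1$-fixed, so that each boundary divisor is $G/G_1 \cong \Albu{X}$. The paper only obtains $\aff{G} \subseteq G_{P_i} \subseteq \aff{G} \cdot G_x$ with equal dimensions, whence the divisor is an abelian variety with an \emph{isogeny} onto $A$ (Proposition \ref{prop_equiv_divhom_Alb}); in the pseudo-abelian case this isogeny is a nontrivial purely inseparable one, so your identification with $\Albu{X}$ is false. Second, even granting a regular $\overline{F}$, the associated bundle $G \stackrel{G_1}{\times} \overline{F}$ exists as a variety only once one has a $G_1$-linearized ample bundle on $\overline{F}$, and the linearization results used in the paper (\cite[Lemma~2.9, Prop.~2.12]{BRI_anv}) again require normality --- the same obstruction. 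Finally, your descent remarks are vaguer than what is needed: over an imperfect field normality is not geometric, which is why the paper first argues over $\overline{k}$ and then passes through the normalization of $\overline{X}_{\overline{k}}$ to return to $k$; simply asserting that the field-of-definition issues can be controlled does not close the argument.
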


In \cite{AHI1}, Ahiezer proved that for a complex homogeneous variety $X$ under the action of a complex linear algebraic group (with no assumption on the Albanese codimension), the boundary $\overline{X} \setminus X$ has at most two connected components. If there are exactly two components then they are homogeneous; but one of them can be an isolated point, like for the standard action of $\GL_2$ on $\PP^2$. Ahiezer also described the variety obtained by removing one of these components, and in particular the open orbit $X$ is a $\G_m$-torsor over some homogeneous projective variety $Y$. Then $X$ is the complement of the zero section of a linearized line bundle $\mathcal{L}$ over $Y$: for example, for the action of $\GL_2$ on $\PP^2$, we have $X = \A^2 \setminus \{0\}$, the $\G_m$-torsor structure is given by the standard map $\A^2 \setminus \{0\} \to \PP^1$ and $X$ is the completion of the zero section of the tautological line bundle on $\PP^1$. Moreover $\overline{X}' = \PP(\mathcal{L} \oplus \mathcal{O}_Y)$ is also a smooth equivariant completion and the boundary $\overline{X}' \setminus X$ consists of two homogeneous divisors. In \cite{AHI2}, Ahiezer also studied the case where the boundary is one homogeneous divisor. In this situation, the completion is again a homogeneous fibration. We will recover this in our setting.

\bigskip
When trying to classify almost homogeneous varieties under a group, the simplest case to look at is when the variety is the group acting on itself by translations. Over a perfect field, a theorem of Claude Chevalley states that every smooth connected algebraic group $G$ sits in a unique exact sequence $1 \to H \to G \to A \to 1$ where $H$ is a smooth connected linear group and $A$ is an abelian variety (see \cite{CON}). The quotient $G \to A$ is the Albanese morphism. Therefore, $G$ has Albanese codimension $1$ if and only if it is an extension of an abelian variety by a smooth connected linear group of dimension $1$ (that is, a form of the multiplicative group $\G_m$ or the additive group $\G_a$).

Over an imperfect field, Chevalley's theorem does not hold anymore. This leads to the notion of pseudo-abelian varieties, introduced by Burt Totaro: these are the smooth connected groups such that every smooth connected normal affine subgroup is trivial. Chevalley's theorem implies that over a perfect field, every pseudo-abelian variety is an abelian variety. Over any imperfect field, Michel Raynaud first constructed in \cite[Exp. XVII, Prop. C.5.1]{SGA3} counterexamples of large dimension, as Weil restrictions of abelian varieties. Totaro then found new classes of counterexamples, of arbitrary dimension, where the Albanese variety is an elliptic curve (see \cite[Cor. 6.5]{TOT}). In particular, there exist pseudo-abelian varieties of Albanese codimension $1$. Totaro also proved that every pseudo-abelian variety can be written as $(A \times H) / K$ where $A$ is an abelian variety, $K$ is a (commutative) finite subgroup scheme of $A$ and $H$ is an extension of a smooth connected unipotent group by $K$ (see \cite[Lemma 6.2]{TOT} for a precise statement). However, to the best of our knowledge, pseudo-abelian varieties are not fully classified, even in dimension $2$. Still, over an arbitrary field, we can give the following description (see Theorem \ref{theo_class_section}).

\begin{propintro}
Every smooth connected algebraic group of Albanese codimension $1$ is commutative. Moreover, $G$ is such a group if and only if one of the following cases holds:
\begin{enumerate}
\item The group $G$ is given by an extension $0 \to T \to G \to A \to 0$ where $T$ is a form of the multiplicative group $\G_m$ and $A$ is an abelian variety. In particular $G$ is a semi-abelian variety.
\item There exist a form $U$ of the additive group $\G_a$ and an abelian variety $A$ such that $G$ is given by an extension $0 \to U \to G \to A \to 0$.
\item The group $G$ is a pseudo-abelian variety such that we have an exact sequence $0 \to \G_{a,\overline{k}} \to G_{\overline{k}} \to B \to 0$ where $B$ is an abelian variety.
\end{enumerate}
\end{propintro}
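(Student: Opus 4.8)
The plan is to determine the structure of $G$ after base change to $\overline{k}$, where Chevalley's theorem is available, and then to descend the resulting information to $k$. The workhorse throughout will be the elementary principle that every morphism from a proper connected variety to an affine variety (sending a point to a point) is constant.

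First I would establish commutativity, which it suffices to check over $\overline{k}$. Using that the formation of the Albanese variety commutes with field extensions, the group $G_{\overline{k}}$ again has Albanese codimension $1$; since $\overline{k}$ is perfect, Chevalley's theorem (\cite{CON}) yields an exact sequence $0 \to L \to G_{\overline{k}} \to B \to 0$ in which $L = \aff{(G_{\overline{k}})}$ is a smooth connected linear group of dimension $1$ and $B$ is an abelian variety, so that $L \cong \G_{m,\overline{k}}$ or $L \cong \G_{a,\overline{k}}$. The conjugation action $G_{\overline{k}} \to \Aut(L)$ is trivial on the commutative subgroup $L$ and hence factors through $B$; as $\Aut(\G_m)$ is finite étale and $\Aut(\G_a) = \G_m$ is affine, the induced morphism from the proper variety $B$ is constant, so $L$ is central. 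The commutator then factors as a morphism $B \times B \to L$ from a proper variety to an affine one, which must be constant equal to the identity. Thus $G_{\overline{k}}$, and therefore $G$, is commutative.

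Next I would split into cases according to $\dim \aff{G} \in \{0,1\}$, the bound coming from $(\aff{G})_{\overline{k}} \subseteq \aff{(G_{\overline{k}})} = L$. If $\dim \aff{G} = 1$, then $(\aff{G})_{\overline{k}} = L$, so the smooth connected group $G/\aff{G}$ becomes $B$ over $\overline{k}$ and is therefore an abelian variety $A$; moreover $\aff{G}$ is a one-dimensional smooth connected linear group, that is, a form of $\G_m$ or of $\G_a$ according to the geometric type of $L$, placing us in case (1) or case (2). If instead $\dim \aff{G} = 0$, then $G$ has no nontrivial smooth connected normal affine subgroup and so is pseudo-abelian by definition. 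It remains to see that $L$ is then unipotent, i.e. $L = \G_{a,\overline{k}}$: if the maximal subtorus of the commutative group $L$ were nontrivial, it would be the unique maximal torus, hence Galois-stable, and would descend to a nontrivial smooth connected normal affine $k$-subgroup of $G$, contradicting pseudo-abelianness (Totaro's observation, \cite{TOT}). This is case (3). Finally, for the converse I would check that each family has Albanese codimension $1$: in cases (1) and (2) the quotient $G \to A$ is the Albanese morphism, since any homomorphism from $G$ to an abelian variety kills the affine subgroup and factors through $A$, whence the codimension equals $\dim \aff{G} = 1$; in case (3) the codimension is again $\dim G - \dim B = 1$ after base change.

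The main obstacle is the imperfect-field case (3). Over $\overline{k}$ everything is controlled by Chevalley's theorem, but comparing the $k$-group $\aff{G}$ with its geometric counterpart $\aff{(G_{\overline{k}})}$ — and in particular excluding a subtorus of $\aff{(G_{\overline{k}})}$ when $G$ is pseudo-abelian — is precisely where the phenomena special to imperfect fields enter, so I expect to rely there on the descent of the maximal torus together with Totaro's structure theory and on the invariance of $\dim \Alb{G}$ under field extension.
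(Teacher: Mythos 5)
Your overall architecture --- Chevalley over $\overline{k}$, centrality of the one-dimensional affine part, a commutator argument for commutativity, then a dichotomy on an affine normal subgroup with the pseudo-abelian case as the degenerate branch --- is sound, and is in fact close to what the paper does in Step 2 of the proof of Theorem \ref{theo_class_section} (the paper deduces the proposition from its classification of homogeneous varieties; your direct argument on the group itself is legitimate, and your commutator proof of commutativity is a reasonable alternative to the paper's route via the Rosenlicht decomposition $G = \ant{G} \cdot \aff{G}$ together with Totaro's results). But two steps need repair. First, your case split is anchored to the wrong group. With the paper's definition (Proposition \ref{prop_Rosenlicht_decomp}), $\aff{G}$ is the \emph{smallest} normal subgroup with abelian-variety quotient, and $G \to G/\aff{G}$ is the Albanese morphism; hence $\dim \aff{G}$ equals the Albanese codimension and is $1$ in every case --- including the pseudo-abelian one, where $\aff{G}$ is a non-smooth one-dimensional subgroup scheme. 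Accordingly, your inclusion $(\aff{G})_{\overline{k}} \subseteq \aff{(G_{\overline{k}})}$ is backwards (minimality gives $\aff{(G_{\overline{k}})} \subseteq (\aff{G})_{\overline{k}}$), and the alternative $\dim \aff{G} = 0$ never occurs. The dichotomy must be run, as the paper does, on $N$, the \emph{largest smooth connected} normal affine subgroup: then $N_{\overline{k}} \subseteq \aff{(G_{\overline{k}})} = L$ does hold (its image in $B$ is affine, smooth, connected and proper, hence trivial), $N$ nontrivial forces $N_{\overline{k}} = L$ and yields cases (1)--(2), and $N$ trivial is precisely the definition of pseudo-abelian. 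Your parenthetical gloss shows this is what you intended, but as written the pivot of the proof is false.

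Second, the exclusion of $L \simeq \G_{m,\overline{k}}$ in the pseudo-abelian case is not justified by what you say. Over an imperfect field, $\overline{k}/k$ is not a Galois extension: invariance of the maximal torus of $G_{\overline{k}}$ under $\Aut(\overline{k}/k)$ only descends it to the perfect closure of $k$, and the entire difficulty is the purely inseparable descent --- exactly the phenomenon that makes pseudo-abelian varieties exist in the first place. The statement you need --- that a smooth connected group containing a nontrivial torus after base change to $\overline{k}$ already contains a nontrivial $k$-torus, equivalently that $G_{\overline{k}}$ semi-abelian implies $G$ semi-abelian --- is true, but it is a theorem resting on Grothendieck's theorem on maximal tori and the rigidity of groups of multiplicative type; it is what the paper invokes as \cite[Lemma 5.4.3]{BRI_structure}. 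With that citation in place of ``Galois-stable, hence descends,'' your case (3) closes. Two smaller points: the formation of the Albanese variety does \emph{not} commute with arbitrary field extensions (this is the point of Proposition \ref{prop_Albcodim_field_ext}; only the dimension is invariant, via a purely inseparable isogeny, which is all you actually use); and in characteristic $p$ the automorphism functor of $\G_a$ is strictly larger than $\G_m$ (units $a_0 + a_1 F + \cdots$ with $a_0$ invertible and the $a_i$, $i \geq 1$, nilpotent), so the centrality argument should note that the non-reduced part of the automorphism functor receives no nonconstant map from the smooth variety $B$; the conclusion is unaffected.
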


\bigskip
Our first examples of almost homogeneous varieties of Albanese codimension $1$ are some groups acting on themselves. By Theorem \ref{theo_unique_compl}, they admit a normal equivariant completion such that the boundary consists of homogeneous divisors. The variety obtained by possibly removing some divisors of the boundary is also almost homogeneous and of Albanese codimension $1$. The following theorem states that those are, up to the existence of a $k$-rational point, all the varieties we are interested in.

\begin{theointro}\label{theo_description}
Let $X$ be a normal variety of Albanese codimension $1$ and let $G$ be a smooth connected algebraic group acting faithfully on $X$. If $X$ is almost homogeneous then there exist a smooth connected subgroup $G' \leq G$ of Albanese codimension $1$ and a $G'$-stable open subscheme $X' \subseteq X$ which is a $G'$-torsor.
\end{theointro}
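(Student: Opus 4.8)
The plan is to locate the torsor $X'$ inside the open $G$-orbit $\Omega \subseteq X$ and to read off $G'$ from the Albanese fibration. First I would record that the Albanese morphism $\alb{X}\colon X \to \Alb{X}$ is $G$-equivariant, with $G$ acting on the torsor $\Alb{X}$ through a homomorphism $\beta\colon G \to A$ to the abelian variety $A$ under which $\Alb{X}$ is a torsor, followed by translations (the action of the connected group $G$ on an abelian torsor is by translations). Fix a point $x_0$ of $\Omega$, let $H$ be its stabilizer and $N := \ker\beta$. The differences $\alb{X}(x)-\alb{X}(y)$ for $x,y \in \Omega$ lie in $\beta(G)$ and generate $A$, while $\beta(G)$ is a closed subgroup of $A$; hence $\beta$ is surjective. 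If $g \in H$ then $\beta(g)$ fixes $\alb{X}(x_0)$, so $\beta(g)=0$ and therefore $H \leq N$. The subgroup $N$ preserves each fibre of $\alb{X}$, and the fibre through $x_0$ meets $\Omega$ in the single $N$-orbit $N\cdot x_0 \cong N/H$, a smooth homogeneous curve $F$; in particular $\dim N = \dim H + 1$, the extra dimension being exactly the Albanese codimension.

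The second step is to produce inside $N$ a one-dimensional smooth connected subgroup $L$ acting on $F$ with trivial scheme-theoretic stabilizer and open orbit, so that $L\cdot x_0$ is an open sub-torsor of $F$. This rests on the classification of one-dimensional homogeneous curves. If $F$ is affine it is a torsor under a form of $\G_m$ or of $\G_a$, and $L$ is taken to be a lift of this structure group to $N$; if $F$ is complete it is a form of $\PP^1$ (the abelian case being excluded, as a complete one-dimensional fibre would enlarge the Albanese), and one picks a one-dimensional $L$ acting with open orbit and passes to that affine orbit. Replacing $x_0$ by a general point of $F$ one arranges $L \cap H = 1$.

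The third and central step is to assemble $G'$ as a genuine subgroup of $G$ with $\beta(G')=A$ and with $\ker(\beta|_{G'})$ equal to $L$. Here I would use the anti-affine part $\ant{G}$: it is commutative, central in $G$, and surjects onto $A$ under $\beta$, since $G/\ant{G}$ is affine and hence $\Alb(\ant{G})=\Alb{G}$ still maps onto $A$. When $L$ can be chosen to be the linear part of $\ant{G}$ one simply sets $G'=\ant{G}$: it is commutative, satisfies $\beta(G')=A$ and $\ker(\beta|_{G'})=L$, so $\dim G' = \dim A + 1 = \dim X$. In general one sets $G':=L\cdot\ant{G}$, which is commutative because $\ant{G}$ is central, and then $\ker(\beta|_{G'})=L\cdot(\ant{G}\cap N)$ must be cut down to $L$. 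Granting this, $X':=G'\cdot x_0$ is open in $\Omega$ (by the dimension count), it is $G'$-stable, and it is a $G'$-torsor because $G'\cap H$ reduces to the trivial group (using $L\cap H=1$ together with $\ant{G}\cap H \leq \ant{G}\cap N$); that $G'$ is smooth connected of Albanese codimension $1$ then follows from the structure of such groups, its three types matching the $\G_m$-, $\G_a$- and pseudo-abelian shapes of the fibre $F$ (Theorem~\ref{theo_class_section}).

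I expect the main obstacle to be this third step, namely realizing $G'$ as a subgroup of $G$ whose $\beta$-kernel is exactly one-dimensional: the linear part of $\ant{G}$ need split off neither from $\ant{G}$ nor compatibly with $L$, so controlling $\ker(\beta|_{G'})=L\cdot(\ant{G}\cap N)$ requires care. The hardest case will be when $F$ is a nontrivial form of $\G_a$ over an imperfect field, where lifting the additive structure group of $F$ from $N/H$ to a subgroup of $N$, and separating it from the linear part of $\ant{G}$, calls on the fine structure of groups of Albanese codimension $1$ (the pseudo-abelian case of Theorem~\ref{theo_class_section}).
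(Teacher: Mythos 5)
Your overall picture (work in the open orbit, fiber over the Albanese, use $\ant{G}$ to cover the abelian part, adjoin a one\--dimensional group moving along the fibers) matches the paper's, but the execution has two genuine gaps, one of which the paper shows to be avoidable and one of which is the actual crux. First, the entire third step is both broken and unnecessary. Broken: your deduction that $G' \cap H$ is trivial ``using $L \cap H = 1$ together with $\ant{G} \cap H \leq \ant{G} \cap N$'' is a non sequitur, since an element of $G' \cap H$ can be a mixed product $la$ with $l \in L$, $a \in \ant{G}$, neither factor lying in $H$; and you give no mechanism at all for cutting $\ker(\beta|_{G'}) = L \cdot (\ant{G} \cap N)$ down to $L$. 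Unnecessary: the paper's proof of Theorem \ref{theo_orbit_torsor} begins with the observation that it suffices to produce \emph{any} commutative smooth connected subgroup $G' \leq G$ acting with dense orbit. Indeed, by commutativity the scheme-theoretic stabilizer of one point of the orbit stabilizes every point of the orbit, hence fixes a dense open subscheme, hence (the fixed locus being closed) acts trivially on $X$, hence is trivial by faithfulness of the $G$-action; so the open orbit is automatically a $G'$-torsor, and the Albanese codimension of $G'$ is then automatically $1$ because $\Albz{X'} \simeq \Albz{X}$ (Proposition \ref{prop_Alb_qhom}). Neither one-dimensionality of $L$ nor any control of $\ker(\beta|_{G'})$ is needed, and this argument also repairs your ``replace $x_0$ by a general point to get $L \cap H = 1$'' step, which as stated ignores infinitesimal stabilizers in characteristic $p$.

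Second, the gap that survives this simplification is the one you flagged in Step 2 but did not close: the existence, \emph{over $k$}, of a smooth connected subgroup $L \leq N$ acting on the fiber with dense orbit. Over an imperfect field $N$ need not be smooth (the pseudo-abelian case: $\aff{G}$ can be non-smooth), the fiber of $\alb{X}$ can be non-reduced and without $k$-rational points (so it is not the ``smooth homogeneous curve $F$'' of your Step 1), and there is no general recipe for lifting a form of $\G_a$ from a quotient of $N$ to a subgroup of $N$. The paper avoids all lifting: assuming $G$ non-commutative, it first shows $X$ is almost homogeneous under $\ant{G} \cdot D(G)$ (the key point being that $D(G)$, normal in $G$ with $G$ acting faithfully, can fix no point of the fiber curve, hence acts on it with dense orbit), and then takes $G' = \ant{G} \cdot T$ for a nontrivial $k$-torus $T \leq D(G)$ if one exists, and otherwise $G' = \ant{G} \cdot D^i(G)$ where $D^i(G)$ is the last nonvanishing iterated derived subgroup --- canonical, smooth, connected, commutative, defined over $k$, and \emph{not required to be one-dimensional} --- verifying density of the orbit only after base change to $\overline{k}$, where their formation is insensitive. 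Your proposal would need either this device or a serious argument from the structure theory of unipotent groups over imperfect fields to be completed.
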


\begin{remintro}
The normal $G$-equivariant completion of $X$ is the normal $G'$-equivariant completion of $X'$. Hence $X$ is obtained from the completion of a $G'$-torsor by removing some divisors of the boundary, and the acting group $G$ can be larger than $G'$. The possibilities for $G$ will be given in Theorem \ref{theo_class_section}.
\end{remintro}

It follows from the classification that, over an algebraically closed field, every proper normal almost homogeneous variety of Albanese codimension $1$ is the projectivization of a vector bundle of rank $2$ over the Albanese variety. In dimension $2$, we get a ruled surface over an elliptic curve. The automorphism group of a ruled surface was determined by Masaki Maruyama in \cite{MAR}, by using explicit computations with coordinates. We give an analogous result in Theorem \ref{theo_aut}, with a different approach. It turns out that, except in one case which only occurs in characteristic $2$, the automorphism group is smooth. The neutral component is then the expected one. The description of the full automorphism group is still an open problem.

\bigskip
The article is organized as follows. In Section \ref{sect_preliminaries} we recall some general results on the Albanese morphism. In particular, we prove that the Albanese codimension is invariant under algebraic field extensions (Proposition \ref{prop_Albcodim_field_ext}), and that the Albanese variety of a normal almost homogeneous variety is the Albanese variety of the dense orbit (Proposition \ref{prop_Alb_qhom}). The equivariant completions of normal almost homogeneous varieties $X$ are studied in Section \ref{sect_completion}. Section \ref{sect_class} is dedicated to the classification in the case of Albanese codimension $1$. Finally, Section \ref{sect_aut} deals with the automorphism group of the completions.

\paragraph{Notations and conventions} We fix a base field $k$ and an algebraic closure $\overline{k}$. For all $k$-schemes $X$ and field extensions $K/k$, the base change $X \times_k \Spec K$ will be denoted by $X_K$. 

All morphisms between $k$-schemes are morphisms over $k$. A variety over $k$ is a separated scheme of finite type over $\Spec k$ which is geometrically integral. A curve is a variety of dimension $1$. An algebraic group over $k$ is a group scheme of finite type over $\Spec k$. A subgroup of an algebraic group is a (closed) subgroup scheme.

For a proper variety $X$, we denote by $\Autgp{X}$ its group scheme of automorphisms. This is a group scheme locally of finite type, representing the functor which associates with every $k$-scheme $S$ the abstract group $\Aut_{S\textrm{-sch}}(X \times S)$ (see \cite[Th. 3.7]{MO}).

A variety $X$ is homogeneous under the action of a smooth connected algebraic group $G$ if and only if the abstract group $G(\overline{k})$ acts transitively on the set $X(\overline{k})$. If $X$ has a $k$-rational point $x$ then, equivalently, $X$ is isomorphic to $G / G_x$ where $G_x$ is the isotropy group of $x$

It is convenient to have a definition of homogeneity for schemes which may not be geometrically irreducible or geometrically reduced (for example, divisors on a variety). Let $Y$ be an integral scheme of finite type over $k$ and let $G$ be a smooth connected algebraic group acting on $Y$. We say that $Y$ is homogeneous if for every irreducible component $Z$ of $Y_{\overline{k}}$, the abstract group $G(\overline{k})$ acts transitively on $Z(\overline{k})$. This means that the reduced subscheme $Z_{\textrm{red}}$ is a homogeneous variety.

If $X$ is a variety equipped with an action of an algebraic group $G$ then an equivariant completion of $X$ is a proper variety $\overline{X}$ equipped with an action of $G$ on $\overline{X}$ and an equivariant open immersion $X \hookrightarrow \overline{X}$.

If $\mathcal{E}$ is a quasi-coherent sheaf on a base scheme $S$ then we can consider the Grothendieck projectivization $\pi : \PP(\mathcal{E}) = \Proj (\Sym \mathcal{E}) \to S$. With this convention, the sections $\sigma : S \to \PP(\mathcal{E})$ of $\pi$ are in bijective correspondence with the invertible sheaves $\mathcal{L}$ on $S$ equipped with a surjective morphism $\mathcal{E} \twoheadrightarrow \mathcal{L}$; the bijection is given by $\sigma \mapsto \sigma^* \mathcal{O}_{\PP(\mathcal{E})}(1)$.

\paragraph{Acknowledgments} I gratefully thank Michel Brion and Stefan Schr\"oer for very useful discussions and suggestions. This work was partially supported by the research training group \textit{GRK 2240 Algebro-geometric Methods in Algebra, Arithmetic and Topology}.

\section{Preliminaries on the Albanese variety}\label{sect_preliminaries}

The Albanese morphism of a projective complex manifold is a well-known object. In \cite{SerreAlb}, Jean-Pierre Serre proved its existence in the context of algebraic varieties over an algebraically closed field. Over an arbitrary field, we have the following.

\begin{proposition}\cite[App. A]{WIT}, \cite[Th. A.5]{ACMV}
Let $X$ be a variety. There exist an abelian variety $\Albz{X}$, a $\Albz{X}$-torsor $\Albu{X}$ and a morphism $\alb{X} : X \to \Albu{X}$ satisfying the following universal property: 
\begin{center}
\parbox{0.95\textwidth}{
for every abelian variety $B_0$, for every $B_0$-torsor $B_1$ and every morphism $b : X \to B_1$, there exists a unique morphism $b_1 : \Albu{X} \to B_1$ such that $b = b_1 \circ \alb{X}$, and there exists a unique group morphism $b_0 : \Albz{X} \to B_0$ such that $b_1$ is equivariant.
}
\end{center}
We call $\Albz{X}$ the Albanese variety of $X$ and $\alb{X}$ the Albanese morphism.

The formation of $\Albz{X}$, $\Albu{X}$ and $\alb{X}$ commutes with separable algebraic field extensions, as well as with finite products of algebraic varieties.
\end{proposition}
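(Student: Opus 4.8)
The plan is to reduce the torsor statement to the classical \emph{pointed} Albanese over a separably closed field and then to descend. First I would pass to the separable closure $k_s$ of $k$: since $X$ is geometrically integral it is geometrically reduced, so its smooth locus is a dense open subscheme, and over the separably closed field $k_s$ this smooth locus has a rational point $x_0 \in X(k_s)$. Translating an arbitrary torsor so that it becomes an abelian variety pointed at the image of $x_0$, the universal property for torsors under abelian varieties over $k_s$ becomes the familiar universal property of the pointed Albanese: an abelian variety $A$ together with a morphism $a : X_{k_s} \to A$ sending $x_0$ to $0$ that is initial among pointed morphisms from $X_{k_s}$ to abelian varieties.

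For the existence of this pointed Albanese over $k_s$ I would invoke the classical construction. When $X$ is smooth and projective one sets $\Albz{X}$ to be the dual of the reduced identity component of the Picard scheme $\Pic_{X/k}$ and recovers $\alb{X}$ from a Poincar\'e bundle, the universal property following from biduality together with the identification of pointed morphisms $X \to B$ with homomorphisms $B^{\vee} \to \Pic^0_{X/k}$ of the associated Picard varieties. The general case is handled by observing that a morphism from $X$ to an abelian variety is insensitive to removing closed subsets and extends across compactifications, so that the problem is controlled by a smooth projective model; equivalently one produces the universal morphism directly, using that the dimensions of the abelian varieties admitting a nonconstant map from $X$ are bounded and choosing a maximal such morphism.

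Next I would descend from $k_s$ to $k$. Applying $\sigma \in \Gal(k_s/k)$ to the triple $(\Albz{X_{k_s}}, \Albu{X_{k_s}}, \alb{X_{k_s}})$ produces another triple with the same universal property, hence by uniqueness a canonical isomorphism; these isomorphisms automatically satisfy the cocycle condition, again by uniqueness, and so constitute a Galois descent datum. Since torsors under abelian varieties are quasi-projective, this descent datum is effective, and it produces $\Albz{X}$, $\Albu{X}$ and $\alb{X}$ over $k$. The universal property over $k$ then follows from the one over $k_s$ by faithfully flat descent of morphisms: any $b : X \to B_1$ base-changes to $X_{k_s}$, factors uniquely through $\alb{X_{k_s}}$, and the resulting factorization is Galois-equivariant by uniqueness, hence descends to $k$.

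Finally, compatibility with separable extensions is built into this construction: for a separable algebraic extension $K/k$, both $(\Albz{X})_K$ and $\Albz{X_K}$ satisfy the universal property over $K$, so they agree by uniqueness. Compatibility with finite products reduces, through the pointed description, to the splitting $\Hom(X \times Y, A) \cong \Hom(X, A) \oplus \Hom(Y, A)$ for pointed morphisms to an abelian variety $A$, a consequence of the rigidity lemma; this yields $\Albz{X \times Y} \cong \Albz{X} \times \Albz{Y}$, which one then promotes to the torsor statement. The hard part of the whole argument is the existence over $k_s$ for an arbitrary---possibly singular and non-proper---variety, which is where the cited references do the real work; and the subtlety that forces the restriction to \emph{separable} extensions is the potential non-smoothness of the Picard scheme over imperfect fields, which is exactly what base change to $k_s$ followed by Galois descent is designed to avoid.
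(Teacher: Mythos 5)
The paper itself contains no proof of this proposition: it is imported verbatim from Wittenberg \cite{WIT} and Achter--Casalaina-Martin--Vial \cite{ACMV}. Your outer skeleton --- a rational point on the dense smooth locus over $k_s$, the pointed Albanese there, canonical isomorphisms coming from uniqueness that automatically satisfy the cocycle condition, effectivity of the descent datum by quasi-projectivity, and the universal property and separable base change deduced from uniqueness --- is exactly the strategy of those references, so at the descent level your proposal is correct and matches the intended argument.

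The genuine gaps are in the existence over $k_s$, which you rightly flag as the hard part but then sketch in a way that fails precisely where it matters. First, defining $\Albz{X}$ as the dual of $\left(\Pic^0_{X/k}\right)_{\mathrm{red}}$ presupposes that this reduced identity component is an abelian variety; over an imperfect field the underlying reduced scheme of a group scheme need not be a subgroup scheme, and $k_s$ is imperfect whenever $k$ is --- so the Picard-duality construction is unavailable in exactly the situation that passing to $k_s$ (rather than $\overline{k}$) is designed to handle. Second, the reduction to a smooth projective model is doubly broken: resolution of singularities is not known in positive characteristic, and the claim that morphisms to abelian varieties are insensitive to removing closed subsets is Weil's extension theorem, which requires smoothness. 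For singular $X$ it is false: the cone over an elliptic curve $E$ with its vertex removed maps onto $E$ (so its Albanese is nontrivial), while the full cone admits only constant maps to abelian varieties, being swept out by rational curves through the vertex. The viable route is the alternative you mention in passing, Serre's maximality argument; but there ``choosing a maximal such morphism'' does not by itself produce a universal object --- a morphism of maximal dimension with generating image is only determined up to isogeny, and one must show the filtered system of such morphisms stabilizes up to isomorphism. Over imperfect fields the obstruction is exactly purely inseparable isogenies, the phenomenon the paper quantifies in Proposition \ref{prop_Albcodim_field_ext}, and controlling it is the actual content of the cited appendices rather than something your sketch supplies. (A smaller point: the product decomposition $f(x,y)=f(x,y_0)+f(x_0,y)$ for possibly non-proper, singular $X$ and $Y$ is not literally the rigidity lemma, which requires a proper factor, and needs its own classical argument.)
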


\begin{remark}
\begin{enumerate}[wide, labelwidth=!, labelindent=0pt]
\item If $X$ has a $k$-rational point $x$ then the $k$-rational point $\alb{X}(x)$ of $\Albu{X}$ induces an isomorphism $\Albz{X} \to \Albu{X}$. In this case, the Albanese variety is simply denoted by $\Alb{X}$, we have $\alb{X}(x) = 0$ and the universal property becomes: 
\begin{center}
\parbox{0.95\textwidth}{
for every abelian variety $B$ and every morphism $b : X \to B$ such that $b(x)=0$, there exists a unique group morphism $\psi : \Alb{X} \to B$ such that $b = \psi \circ \alb{X}$.
}
\end{center}
\item For regular varieties, the Albanese variety is a birational invariant (this follows from Weil's extension theorem, see \cite[8.4 Cor. 6]{BLR}). Proposition \ref{prop_Alb_qhom} will give an analogous result for normal almost homogeneous varieties.
\item If $G$ is a smooth connected algebraic group acting on $X$ then it follows from the universal property of $\alb{G \times X}$ that there exists a unique action of $G$ on $\Albu{X}$ such that $\alb{X}$ is $G$-equivariant. Moreover, this action comes from the group morphism $\alb{G} : G \to \Alb{G}$ and a unique action of $\Alb{G}$ on $\Albu{X}$.
\end{enumerate}
\end{remark}

\begin{example}
Let $A$ be an abelian variety, let $\mathcal{E}$ be a locally free sheaf of finite rank $r \geq 1$ on $A$ and let $\pi : X = \PP(\mathcal{E}) \to A$. Since $X$ is locally isomorphic to the trivial projective bundle and every morphism from $\PP^{r-1}$ to an abelian variety is trivial, we see that $\pi$ is the Albanese morphism of $X$.
\end{example}

The formation of the Albanese variety commutes with separable algebraic field extensions, but this it not true for arbitrary algebraic field extensions. Indeed, if $G$ is a pseudo-abelian variety but not an abelian variety then the kernel of the Albanese morphism $G \to \Alb{G}$ is not smooth, while the kernel of $G_{\overline{k}} \to \Alb{G_{\overline{k}}}$ is smooth (this follows from Proposition \ref{prop_Rosenlicht_decomp}). In general, let $X$ be a variety and let $k'/k$ be a field extension. The universal property of the Albanese variety applied to $(\alb{X})_{k'} : X_{k'} \to \Albu{X}_{k'}$ gives a group morphism $\phi : \Albz{X_{k'}} \to \Albz{X}_{k'}$. In \cite[Lemma A.15]{ACMV}, Jeffrey Achter, Sebastian Casalaina-Martin and Charles Vial give a precise description of $\phi$ when $k'/k$ is a finite and purely inseparable extension; in this case, $\phi$ is surjective and its kernel if connected. Actually, we can show that this kernel is infinitesimal.

\begin{proposition}\label{prop_Albcodim_field_ext}
Let $X$ be a variety and let $k'/k$ be an algebraic field extension. The natural morphism $\Albz{X_{k'}} \to \Albz{X}_{k'}$ is a purely inseparable isogeny. In particular, $\dim \Albz{X}$ is invariant under algebraic field extensions.
\end{proposition}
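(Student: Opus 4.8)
The plan is to build the comparison morphism $\phi\colon\Albz{X_{k'}}\to\Albz{X}_{k'}$ from the universal property, reduce the statement to the case of a \emph{finite} purely inseparable extension, and there appeal to the description of Achter, Casalaina-Martin and Vial recalled above. Concretely, base-changing $\alb{X}\colon X\to\Albu{X}$ yields a morphism $X_{k'}\to\Albu{X}_{k'}$ to a torsor under the abelian variety $\Albz{X}_{k'}$, and the universal property of $\Albz{X_{k'}}$ produces $\phi$. Writing $k\subseteq k_s\subseteq k'$ with $k_s$ the separable closure of $k$ in $k'$, the extension $k_s/k$ is separable and $k'/k_s$ is purely inseparable. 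Over the separable step the formation of the Albanese commutes with the base change (by the commutation with separable algebraic extensions recalled above), so the corresponding comparison morphism is an isomorphism; hence it suffices to treat the case where $k'/k$ is purely inseparable, and we may assume $\operatorname{char}k=p>0$.

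Next I would reduce to a finite extension. Write $k'=\varinjlim k_i$ as the filtered union of its finite purely inseparable subextensions, so that $X_{k'}=\varprojlim X_{k_i}$ with affine transition maps. By the standard limit formalism, the whole datum $(\Albz{X_{k'}},\alb{X_{k'}})$ together with its defining universal property is of finite presentation over the limit and therefore descends to some finite stage: for $i$ large one has $\Albz{X_{k'}}\cong(\Albz{X_{k_i}})_{k'}$ and $\phi=(\phi_{k_i/k})_{k'}$, where $\phi_{k_i/k}$ is the analogous comparison morphism for $k_i/k$. As being a purely inseparable isogeny is stable under base change, it is enough to treat the case $k'=k_i$, that is, a finite purely inseparable extension.

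For such an extension, \cite[Lemma A.15]{ACMV} gives that $\phi$ is surjective with connected kernel. Since that kernel is a finite group scheme over a field, connectedness forces it to be local, hence infinitesimal; and a surjective homomorphism of abelian varieties with infinitesimal kernel is precisely a purely inseparable isogeny. (The infinitesimality reflects that, choosing $q=p^e$ with $(k')^q\subseteq k$, the projection $X_{k'}\to X$ is radicial, so the discrepancy between $\Albz{X_{k'}}$ and $\Albz{X}_{k'}$ is expected to be killed by a power of the relative Frobenius of $\Albz{X_{k'}}$; this is where the purely inseparable hypothesis is essential.) Finally, the ``in particular'' follows at once: an isogeny and a field base change both preserve dimension, and the separable part is an isomorphism, so $\dim\Albz{X}$ is unchanged under any algebraic extension.

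The step I expect to be the real obstacle is the finite purely inseparable case, equivalently showing that the Albanese dimension does not jump, i.e. that the kernel of $\phi$ is finite and not merely connected. This is exactly the content extracted from \cite{ACMV}; its geometric source is the radicial nature of $X_{k'}\to X$, which makes the difference between $\Albz{X_{k'}}$ and $\Albz{X}_{k'}$ a purely Frobenius (infinitesimal) phenomenon. The accompanying limit argument, ensuring the conclusion propagates from finite to arbitrary algebraic extensions, is the other point requiring care.
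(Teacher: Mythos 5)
Your overall strategy (reduce to a finite purely inseparable extension, then apply \cite[Lemma A.15]{ACMV}) is the paper's, but there is a genuine gap at exactly the step you yourself single out as the crux: the finiteness of $\ker\phi$. Lemma A.15 of \cite{ACMV} gives only that $\phi$ is surjective with \emph{connected} kernel; it does not give finiteness, and your sentence ``since that kernel is a finite group scheme over a field, connectedness forces it to be local, hence infinitesimal'' assumes the very point to be proved. Your closing attribution (``this is exactly the content extracted from \cite{ACMV}'') is also wrong on this score: the paper explicitly states that infinitesimality is what it adds \emph{beyond} \cite{ACMV}. Nothing in your argument rules out a priori that the connected kernel is positive-dimensional, i.e.\ that $\dim\Albz{X_{k'}}>\dim\Albz{X}$ --- and this dimension invariance is precisely the conclusion of the proposition, so it cannot be invoked. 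Your parenthetical Frobenius heuristic (``is expected to be killed by a power of the relative Frobenius'') is the right germ of an argument but is never carried out. The paper closes the gap as follows: by \cite[Lemma 3.10]{BRI_commut} there exist an abelian variety $B$ over $k$ and a purely inseparable isogeny $u : \Alb{X_{k'}} \to B_{k'}$ (this is where the Frobenius-twist idea is made precise), and by the universal property (via \cite[Lemma A.15]{ACMV} again) $u$ factorizes as $\Alb{X_{k'}} \xrightarrow{\phi} \Alb{X}_{k'} \to B_{k'}$, whence $\ker\phi \subseteq \ker u$ is finite.

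A secondary, repairable issue lies in the reduction to a finite subextension: you assert that the ``defining universal property'' descends by the standard limit formalism, but spreading out only gives an abelian variety $A$ over a finite stage $k_0$ and a morphism $f : X_{k_0}\to A$ base-changing to $\alb{X_{k'}}$; it does not by itself say that $f$ is the Albanese morphism of $X_{k_0}$. The paper verifies this: by the universal property of $\alb{X_{k_0}}$, $f$ factorizes as $X_{k_0}\to\Alb{X_{k_0}}\to A$, and the morphism $\Alb{X_{k_0}}\to A$ becomes an isomorphism after extension of scalars to $k'$, hence is already one over $k_0$. (The paper also first extends scalars to $k_s$, replacing $k'$ by the field $k'\otimes_k k_s$, so that $X$ acquires a rational point and the Albanese torsor trivializes --- your limit argument, stated at the level of $\Albz{X_{k'}}$ alone, would otherwise have to track the torsor $\Albu{X_{k'}}$ as well.) These points are routine to fix; the kernel-finiteness step is not.
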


\begin{proof}
We prove that we can assume that $k'/k$ is finite and purely inseparable, in order to use the result of Achter, Casalaina-Martin and Vial. The fact that the kernel is finite then follows from a simple argument.

Since the constructions commute with separable field extensions, we first can assume that $k'/k$ is purely inseparable and that $X$ has a $k$-rational point. Indeed, we start by taking the separable closure $k''$ of $k$ in $k'$. Since the extension $k''/k$ is separable, the morphism $\Albz{X_{k''}} \to \Albz{X}_{k''}$ is an isomorphism. After extension of scalars, we get the isomorphism $\Albz{X_{k''}}_{k'} \to \Albz{X}_{k'}$. Then $\phi$ is the composed morphism $\Albz{X_{k'}} \to \Albz{X_{k''}}_{k'} \to \Albz{X}_{k'}$ (by uniqueness in the universal property), so that it suffices to show that $\Albz{X_{k'}} \to \Albz{X_{k''}}_{k'}$ is a purely inseparable isogeny. Hence we can replace $k$ with $k''$ and assume that $k'/k$ is purely inseparable. In this situation, the ring $K = k' \otimes_k k_s$ is a field, the extension $K/k'$ is separable and $K/k_s$ is purely inseparable. Let $\psi : \Albz{X_K} = \Albz{X_{k'}}_K \to \Albz{X}_K = (\Albz{X}_{k'})_K$ be the morphism deduced from $\phi$ after extension of scalars, so that $\phi$ is a purely inseparable isogeny if and only if so is $\psi$. Then $\psi$ is also the morphism given by the universal property of the Albanese variety. Thus we can replace $k$ and $k'$ with $k_s$ and $K$.

We now reduce to the case where $k'/k$ is finite. Let $k_0/k$ be a finite subextension of $k'/k$ such that $\Alb{X_{k'}}$ and $\alb{X_{k'}} : X_{k'} \to \Alb{X_{k'}}$ are defined over $k_0$: we have an abelian variety $A$ defined over $k_0$ and a morphism $f : X_{k_0} \to A$ such that $A_{k'} = \Alb{X_{k'}}$ and the morphism deduced from $f$ after extension of scalars is $\alb{X_{k'}}$. Then $f$ is the Albanese morphism of $X_{k_0}$, because by the universal property of $\alb{X_{k_0}}$ the morphism $f$ factorizes as $X_{k_0} \to \Alb{X_{k_0}} \to A$, and by construction after extension of scalars to $k'$ the morphism $\Alb{X_{k_0}} \to A$ becomes an isomorphism so it is already an isomorphism over $k_0$. Moreover, $\phi$ is deduced from the morphism $\Alb{X_{k_0}} \to \Alb{X}_{k_0}$.

Therefore, we can assume that $k'/k$ is a finite (and purely inseparable). By \cite[Lemma A.15]{ACMV}, $\phi$ is surjective with connected kernel. It remains to show that this kernel is finite. By \cite[Lemma 3.10]{BRI_commut}, there exist an abelian variety $B$ defined over $k$ and a purely inseparable isogeny $u : \Alb{X_{k'}} \to B_{k'}$. But it follows from \cite[Lemma A.15]{ACMV} again that this isogeny factorizes as $\Alb{X_{k'}} \stackrel{\phi}{\to} \Alb{X}_{k'} \to B_{k'}$, so $\ker \phi$ is a subgroup of $\ker u$.
\end{proof}

The Albanese variety of a homogeneous variety can be easily described. We first need a result on algebraic groups.

\begin{proposition}\label{prop_Rosenlicht_decomp}\cite[Th. 3.2.1, 4.3.2, 4.3.4 and 5.1.1]{BRI_structure} Let $G$ be a smooth connected algebraic group.
\begin{enumerate}
\item There exists a smallest normal subgroup $\aff{G}$ such that the quotient $G / \aff{G}$ is an abelian variety. The Albanese morphism of $G$ is the quotient morphism $ G \to G / \aff{G}$. The subgroup $\aff{G}$ is affine and connected, and its formation commutes with algebraic separable field extensions. If the base field $k$ is perfect then $\aff{G}$ is smooth.
\item There exists a smallest normal subgroup $\ant{G}$ such that the quotient $G / \ant{G}$ is affine. The subgroup $\ant{G}$ is anti-affine (that is, $\mathcal{O}(\aff{G}) = k$), smooth, connected and central in $G$, and its formation commutes with arbitrary field extensions.
\item We have the Rosenlicht decomposition $G = \ant{G} \cdot \aff{G}$.
\end{enumerate}
\end{proposition}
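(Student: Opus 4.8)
The plan is to construct the two subgroups $\aff{G}$ and $\ant{G}$ by two independent universal constructions --- the Albanese morphism and the affinization --- and then to deduce the Rosenlicht decomposition $G = \ant{G}\cdot\aff{G}$ from the single formal fact that a connected algebraic group which is simultaneously affine and proper is trivial. The deep structural input I would import is the Barsotti--Chevalley theorem, together with the basic theory of anti-affine groups.

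First I would produce $\aff{G}$. Since $G$ is a group, the universal property of the Albanese morphism (applied after translating to send the identity to the origin) shows that $\alb{G} : G \to \Alb{G}$ is a group homomorphism onto an abelian variety, so I set $\aff{G} := \ker \alb{G}$, a normal subgroup with $G/\aff{G} = \Alb{G}$ an abelian variety. Minimality is formal: any homomorphism $G \to A'$ onto an abelian variety factors through $\Alb{G}$ by its universal property, so its kernel contains $\aff{G}$. The real content is that $\aff{G}$ is affine and connected, which is the general-field form of the Barsotti--Chevalley theorem asserting the existence of a smallest normal affine subgroup with proper (hence, by connectedness, abelian-variety) quotient, the quotient being exactly the Albanese variety. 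Over a perfect field, Chevalley's theorem provides a \emph{smooth} such subgroup, and minimality forces it to equal $\aff{G}$, giving smoothness; over an imperfect field this can fail, which is precisely the pseudo-abelian phenomenon. Compatibility with separable extensions follows from the corresponding compatibility of $\alb{G}$, while inseparable extensions are excluded because smoothness of the kernel need not be preserved.

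Next I would produce $\ant{G}$ by affinization. A key theorem is that $\mathcal{O}(G)$ is a finitely generated Hopf algebra, so that $G^{\mathrm{aff}} := \Spec \mathcal{O}(G)$ is an affine algebraic group and the canonical morphism $G \to G^{\mathrm{aff}}$ is a surjective homomorphism; I set $\ant{G}$ to be its kernel, which is anti-affine by construction. It is the smallest normal subgroup with affine quotient: any homomorphism $G \to Q$ with $Q$ affine satisfies $Q = \Spec \mathcal{O}(Q)$ and hence factors through $G^{\mathrm{aff}}$, so $\ker(G \to Q) \supseteq \ant{G}$. Smoothness, connectedness and commutativity of $\ant{G}$ are the structural properties of anti-affine groups, and centrality in $G$ I would obtain from a rigidity argument: a connected group admits no nontrivial algebraic action by automorphisms on an anti-affine group, so conjugation is trivial on $\ant{G}$. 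Compatibility with arbitrary field extensions follows from the flat base-change identity $\mathcal{O}(G_{k'}) = \mathcal{O}(G) \otimes_k k'$. For the decomposition itself, since $\ant{G}$ is central and $\aff{G}$ is normal, the product $N := \ant{G} \cdot \aff{G}$ is a normal subgroup, and $G/N$ is at once a quotient of the abelian variety $G/\aff{G}$, hence proper, and a quotient of the affine group $G/\ant{G}$, hence affine; a connected group that is both proper and affine is trivial, so $G = \ant{G} \cdot \aff{G}$.

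I expect the main obstacle to lie in the deep structural inputs rather than the bookkeeping: the Barsotti--Chevalley theorem over an arbitrary field (affineness of $\aff{G}$), the finite generation of $\mathcal{O}(G)$, and the fine properties of anti-affine groups (commutativity, centrality, smoothness). These are exactly the results imported from \cite{BRI_structure}; by contrast, the minimality statements, the field-extension compatibilities, and the final ``proper and affine implies trivial'' step are comparatively formal.
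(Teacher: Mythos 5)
The paper offers no proof of this proposition: it is imported wholesale from Brion's structure notes (\cite[Th. 3.2.1, 4.3.2, 4.3.4 and 5.1.1]{BRI_structure}), so the only meaningful benchmark is the standard argument of that reference, and your outline reproduces it faithfully: $\aff{G}$ as the kernel of the Albanese homomorphism, with affineness and connectedness imported from the Barsotti--Chevalley/Raynaud theorem and smoothness over a perfect field deduced from Chevalley's theorem plus minimality; $\ant{G}$ as the kernel of the affinization morphism $G \to \Spec \mathcal{O}(G)$, with centrality via rigidity; and the decomposition obtained by playing the abelian-variety quotient against the affine quotient. Your division into ``deep imported inputs'' versus ``formal bookkeeping'' is also the correct one.

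However, the one lemma you flag as purely formal is false as you state it: ``a connected algebraic group which is simultaneously affine and proper is trivial'' fails at the group-scheme level, since $\alpha_p$ and $\mu_p$ are connected, affine, finite (hence proper) and nontrivial. Affine plus proper only yields finite, and finite plus connected only yields infinitesimal. The repair is immediate but must be made: $G$ is smooth by hypothesis, quotients of smooth algebraic groups over a field are smooth, so $G/(\ant{G} \cdot \aff{G})$ is smooth, connected, affine and proper, hence a reduced finite connected group scheme, hence trivial. Note that the same smoothness input is silently needed in your perfect-field step, where ``minimality forces equality'' with the Chevalley subgroup $H$: one shows $H/\aff{G}$ is affine, a subgroup of an abelian variety hence proper, and smooth and connected as a quotient of $H$, so trivial. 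Two smaller glosses: anti-affineness of the kernel of the affinization morphism is not ``by construction'' but is itself part of the cited theorem (as is the surjectivity of $G \to \Spec \mathcal{O}(G)$, which requires the finite generation of $\mathcal{O}(G)$ and more); and connectedness of $\ker \alb{G}$, which you fold into the imported theorem, in fact follows formally by the minimality argument applied to $G/(\ker \alb{G})^\circ$.
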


\begin{proposition}\label{prop_Alb_hom}\cite[Sect. 3]{BRI_nonaffine} Let $X$ be a variety which is homogeneous under the action of a smooth connected algebraic group $G$. If $X$ has a $k$-rational point $x$ then $\Alb{X} = G / (\aff{G} \cdot G_x)$ and the Albanese morphism is the quotient morphism $X \simeq G / G_x \to G / (\aff{G} \cdot G_x)$.
\end{proposition}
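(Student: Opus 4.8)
The plan is to exhibit the quotient morphism $a : X \simeq G/G_x \to A := G/(\aff{G}\cdot G_x)$ as the Albanese morphism of $X$, by checking that the pair $(A,a)$ satisfies the universal property; the uniqueness built into the Albanese construction then yields the desired identification. First I would record that $\aff{G}\cdot G_x$ is a closed subgroup scheme, since $\aff{G}$ is normal in $G$ by Proposition \ref{prop_Rosenlicht_decomp}, so that the quotient $A$ exists. Writing $H$ for the scheme-theoretic image of $G_x$ under the Albanese morphism $\alb{G} : G \to G/\aff{G} = \Alb{G}$ of $G$, the isomorphism theorem gives $A \simeq \Alb{G}/H$. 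As $\Alb{G}$ is an abelian variety and the quotient of an abelian variety by a subgroup scheme is again an abelian variety (being proper, smooth and connected), $A$ is an abelian variety; moreover the base point $x$, i.e.\ the identity coset, maps to $0 \in A$.

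Next I would verify the universal property. Let $B$ be an abelian variety and $b : X \to B$ a morphism with $b(x)=0$. Pulling back along the orbit map $\pi : G \to X$ yields $\beta := b \circ \pi : G \to B$ with $\beta(e)=0$. Here is the key step: by Proposition \ref{prop_Rosenlicht_decomp} the quotient $\alb{G} : G \to \Alb{G}$ is precisely the Albanese morphism of $G$, so the universal property of $\Alb{G}$ factors $\beta$ as $\gamma \circ \alb{G}$ for a unique group homomorphism $\gamma : \Alb{G} \to B$. Since the fibre $\pi^{-1}(x)$ equals $G_x$ scheme-theoretically, the restriction $\beta|_{G_x}$ is the constant morphism $0$; hence $\gamma$ kills $H = \alb{G}(G_x)$ and, by the universal property of the quotient, descends to a group homomorphism $\psi : \Alb{G}/H \simeq A \to B$.

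It then remains to check $b = \psi \circ a$ and the uniqueness of $\psi$. Writing $q : G \to A$ for the quotient, one has $q = a \circ \pi$ and, by construction, $\beta = \psi \circ q = \psi \circ a \circ \pi$; comparing with $\beta = b \circ \pi$ and using that $\pi$ is faithfully flat, hence an epimorphism, we may cancel $\pi$ to obtain $b = \psi \circ a$. Uniqueness of $\psi$ follows likewise from the surjectivity of $a$, since any two group homomorphisms agreeing after composition with the faithfully flat map $a$ coincide.

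I expect the only genuinely delicate points to concern quotients by subgroup schemes that need not be smooth or connected: namely that $\aff{G}\cdot G_x$ is a closed subgroup scheme, that $A$ is an abelian variety, and that $\gamma$ descends through $\Alb{G} \to \Alb{G}/H$. These are standard facts about quotients of algebraic groups, but some care is warranted when $G_x$ (and thus $H$) is non-reduced or disconnected, as may occur over an imperfect field or in positive characteristic; reducing the verification of $\gamma|_H = 0$ to $\beta|_{G_x}=0$ via the fppf-surjectivity of $\alb{G} : G_x \to H$ is the cleanest way to sidestep these issues.
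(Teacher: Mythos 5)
Your proof is correct. There is in fact no internal proof to compare it against: the paper quotes Proposition \ref{prop_Alb_hom} from \cite[Sect. 3]{BRI_nonaffine} without argument, and what you have written is essentially the standard verification from that reference, resting on the fact — recorded in Proposition \ref{prop_Rosenlicht_decomp} — that $\alb{G} \colon G \to G/\aff{G}$ is the Albanese morphism of $G$.

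The points you flag as delicate are all handled correctly. Verifying the pointed universal property suffices because $x \in X(k)$ lets one normalize any morphism to a torsor under an abelian variety, exactly as in the Remark following the Albanese proposition in Section \ref{sect_preliminaries}. The scheme-theoretic image $H$ is a closed subgroup scheme with no reducedness or connectedness hypotheses needed, since $\alb{G}|_{G_x}$ is a homomorphism of group schemes of finite type over $k$, with image $G_x/(G_x \cap \aff{G})$; your reduction of $\gamma|_H = 0$ to $\beta|_{G_x} = 0$ via the fppf surjection $G_x \to H$ is then exactly the right move, and it is also what makes $\Alb{G}/H$ an abelian variety (proper, connected, and geometrically reduced — hence smooth — as an fppf quotient of the smooth group $\Alb{G}$). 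Finally, cancelling $\pi$ and $a$ is legitimate because fppf quotient morphisms are epimorphisms of schemes. One small consistency check worth noting: your identification of $\alb{X}$ with the quotient map is precisely what the paper uses later, e.g.\ in the proof of Proposition \ref{prop_Alb_qhom}, where $f_0$ is described as the quotient $G/\aff{G} \to G/(\aff{G} \cdot G_x)$, so your argument fills in exactly the citation the paper leans on.
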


\begin{proposition}\label{prop_Alb_qhom} Let $X$ be a variety which is almost homogeneous under the action of a smooth connected algebraic group $G$, and let $U$ be a $G$-stable open subscheme of $X$.
\begin{enumerate}[wide, labelwidth=!, labelindent=0pt]
\item The natural morphisms $\Albu{U} \to \Albu{X}$ and $\Albz{U} \to \Albz{X}$ are surjective.
\item The Albanese codimension of $X$ is non-negative, and is greater or equal to that of $U$.
\item If moreover $X$ normal then $\Albu{U} \to \Albu{X}$ and $\Albz{U} \to \Albz{X}$ are isomorphisms.
\end{enumerate}
\end{proposition}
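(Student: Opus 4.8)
Throughout, let $\Omega \subseteq X$ denote the dense $G$-orbit. Since $U$ is $G$-stable, open and nonempty, it meets $\Omega$ and therefore contains it, so that $\Omega \subseteq U \subseteq X$ and $\dim U = \dim X$. The three inclusions induce, by the universal property of the Albanese, compatible natural morphisms $\Albz{\Omega} \to \Albz{U} \to \Albz{X}$ (and likewise on the torsors $\Albu{-}$), the composite being the morphism attached to $\Omega \hookrightarrow X$. The plan is to reduce the geometric content to the homogeneous case (Proposition \ref{prop_Alb_hom}) and to the invariance of the Albanese dimension (Proposition \ref{prop_Albcodim_field_ext}).

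For (1) and (2), I would first reduce to $k = \overline{k}$. Surjectivity of $\Albz{U} \to \Albz{X}$ may be checked after the faithfully flat base change to $\overline{k}$; comparing $\Albz{U_{\overline{k}}} \to \Albz{X_{\overline{k}}}$ with $(\Albz{U})_{\overline{k}} \to (\Albz{X})_{\overline{k}}$ through the purely inseparable, hence surjective, isogenies of Proposition \ref{prop_Albcodim_field_ext} shows it suffices to treat $k = \overline{k}$. So assume $k = \overline{k}$, fix a rational point $x \in \Omega$, and work with the pointed Albanese. It is then enough to see that $\Albz{\Omega} \to \Albz{X}$ is surjective, as it factors through $\Albz{U}$. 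Its image $B$ is an abelian subvariety of $\Albz{X}$; if $B$ were proper I would form the nonzero quotient $q : \Albz{X} \to \Albz{X}/B$ and note that $q \circ \alb{X}$ vanishes on $\Omega$, since $\alb{X}(\Omega) \subseteq B$ by Proposition \ref{prop_Alb_hom}, hence on all of $X$ by density and separatedness; the uniqueness in the universal property of $\alb{X}$ would then force $q = 0$, a contradiction. This same computation shows $\alb{X}$ is surjective, whence $\dim \Albz{X} = \dim \Albz{X_{\overline{k}}} \le \dim X$ by Proposition \ref{prop_Albcodim_field_ext}, giving the non-negativity in (2); the comparison with $U$ follows from $\dim U = \dim X$ together with $\dim \Albz{X} \le \dim \Albz{U}$, which is (1). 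The statements for $\Albu{-}$ follow by the same reduction, the chosen point trivialising the torsors, or directly from equivariance.

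For (3), I would reduce the assertion for the pair $(U,X)$ to the single claim that $\Albu{\Omega} \to \Albu{X}$ is an isomorphism whenever $X$ is normal almost homogeneous with open orbit $\Omega$: applying this to $U$ and to $X$ (both normal almost homogeneous with the same open orbit) identifies $\Albu{U}$ and $\Albu{X}$ with $\Albu{\Omega}$. To prove the core claim I would construct an inverse to the natural map $\phi : \Albu{\Omega} \to \Albu{X}$ by extending $\alb{\Omega} : \Omega \to \Albu{\Omega}$ to a morphism $\tilde{a} : X \to \Albu{\Omega}$. Granting such an extension, the universal property of $\alb{X}$ factors $\tilde{a}$ as $\chi \circ \alb{X}$; comparing the resulting endomorphisms on the dense open $\Omega$ and invoking the uniqueness clauses for $\alb{\Omega}$ and $\alb{X}$ then gives $\chi \circ \phi = \id$ and $\phi \circ \chi = \id$, so $\phi$ is an isomorphism.

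The main obstacle is exactly the existence of the extension $\tilde{a}$, and this is where normality and almost-homogeneity must combine. Since $\Albu{\Omega}$ is a torsor under the abelian variety $\Albz{\Omega}$, hence proper, and $X$ is normal, so regular in codimension $1$ with discrete valuation rings at its codimension-$1$ points, the valuative criterion of properness extends $\alb{\Omega}$ over every codimension-$1$ point, yielding a morphism on an open $V \subseteq X$ with $\codim(X \setminus V) \ge 2$ in the spirit of Weil's extension theorem (cf. \cite[8.4]{BLR}). The resulting rational map is $G$-equivariant, so $V$ is $G$-stable and $X \setminus V$ is a union of orbits of codimension $\ge 2$; the difficulty is to rule out any remaining indeterminacy. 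Here I would exploit the fact, recalled earlier, that the $G$-action on $\Albu{\Omega}$ factors through the abelian variety $\Alb{G}$ acting transitively, and use equivariance to propagate the extension across the low-dimensional boundary orbits and conclude $V = X$. I expect this propagation across the higher-codimension boundary by means of the group action to be the technical heart of the argument.
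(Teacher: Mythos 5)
Parts (1) and (2) of your proposal are correct and essentially reproduce the paper's argument: reduce to the open orbit with a rational point, observe that $\alb{X}$ factors through the image of $\Alb{U} \to \Alb{X}$ by density, and conclude by the universal property. Your quotient-by-the-image trick is an immaterial variant of the paper's scheme-theoretic-image argument, and your reduction to $\overline{k}$ via Proposition \ref{prop_Albcodim_field_ext} is a harmless substitute for the paper's passage to a finite separable extension.

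For (3), however, there is a genuine gap, and it sits exactly where you yourself locate the ``technical heart'': extending $\alb{\Omega}$ across the boundary locus of codimension $\geq 2$. Two problems. First, the mechanism you propose --- propagating the extension along boundary orbits by equivariance --- cannot work as stated. Equivariance shows (as in the paper's lemma on equivariant rational maps) that the domain of definition $V$ is $G$-stable, so each boundary orbit lies \emph{entirely inside or entirely outside} $V$; a translate $gy$ of a point $y \notin V$ is again outside $V$, so there is no point of its orbit from which anything can be propagated. Second, the extension is not a formal consequence of normality plus codimension-one definedness: for the affine cone $X$ over an elliptic curve $E \subset \PP^2$, the projection $X \setminus \{0\} \to E$ is defined in codimension one but extends to no morphism on $X$ (on the blow-up of the vertex it restricts to an isomorphism on the exceptional copy of $E$, which the contraction would force to map to a point). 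So a genuinely global input from the group action is indispensable, and it is precisely what the paper supplies: after reducing to quasi-projective $X$ by covering it with $G$-stable quasi-projective open subsets \cite[Th. 1]{BRI_anv}, one notes that the composite $X \dashrightarrow \Alb{U} \xrightarrow{\Alb{i}} \Alb{X}$ is the everywhere-defined morphism $\alb{X}$, so by a (graph-closure) version of Zariski's Main Theorem it suffices that $\Alb{i}$ be finite. Finiteness is obtained by comparing the homomorphism $f_0 : \Alb{G} \to \Alb{U}$, which is finite because its kernel $(\aff{G} \cdot G_x)/\aff{G}$ is affine and proper, with $f = \Alb{i} \circ f_0 : \Alb{G} \to \Alb{X}$, whose finiteness is the key nontrivial input \cite[Cor. 3]{BRI_anv}. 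Your sketch contains no substitute for this finiteness-plus-ZMT argument, so as written the proof of (3) is incomplete.
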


\begin{proof}
It suffices to treat the case where $U$ is the open orbit, so that $\alb{U}$ is surjective. Let $i : U \to X$ be the inclusion. By the universal property of $\alb{U}$, there exists a unique morphism $\Albu{i}$ such that the square
\begin{tikzpicture}[baseline=(m.center)]
\matrix(m)[matrix of math nodes,
row sep=2.5em, column sep=2.5em,
text height=1.5ex, text depth=0.25ex, minimum width=3em, anchor=base, ampersand replacement=\&]
{U \&  X \\
\Albu{U} \& \Albu{X}\\};
\path[->] (m-1-1) edge node[above] {$i$}(m-1-2);
\path[->] (m-1-1) edge node[left] {$\alb{U}$}(m-2-1);
\path[->] (m-2-1) edge node[below] {$\Albu{i}$}(m-2-2);
\path[->] (m-1-2) edge node[right] {$\alb{X}$}(m-2-2);
\end{tikzpicture}
is commutative and a unique group morphism $\Albz{i} : \Albz{U} \to \Albz{X}$ such that $\Albu{i}$ is equivariant. Since the different constructions commute with separable field extensions, we can assume that $U$ has a $k$-rational point $x$, and simply write $\Alb{U}$, $\Alb{X}$ and $\Alb{i}$.

The image of $\Alb{U} \to \Alb{X}$ is an abelian subvariety $A$ of $\Alb{X}$. Since $U$ is scheme-theoretically dense in $X$ and $\alb{U}$ is surjective, $A$ is the scheme-theoretic image of $\alb{X}$. Then the restriction $X \to A$ satisfies the universal property of the Albanese map, so $A = \Alb{X}$. Hence $\Alb{U} \to \Alb{X}$ is indeed surjective and $\dim X - \dim \Alb{X} \geq \dim U - \dim \Alb{U} \geq 0$.

The statement for a normal $X$ is given in the proof of \cite[Th. 3]{BRI_nonaffine} when the base field is algebraically closed. Using results of the article \cite{BRI_anv}, we see that it remains valid over an arbitrary field. For the seek of completeness, we recall the arguments. It suffices to show that the rational map $X \dashrightarrow \Alb{U}$ induced by $\alb{U} : U \to \Alb{U}$ is defined everywhere, because it is easily checked that it is then a morphism satisfying the universal property of $\alb{X}$. The variety $X$ is normal so by \cite[Th. 1]{BRI_anv} it is covered by $G$-stable quasi-projective open subsets (which contain $U$). Hence we can assume that $X$ is quasi-projective. Since the composed map $X \dashrightarrow \Alb{U} \xrightarrow{\Alb{i}} \Alb{X}$ is defined everywhere, by a version of Zariski's Main Theorem it suffices to prove that the morphism $\Alb{i}$ is finite.

The action of $G$ on $U$ induces a group morphism $f_0 : \Alb{G} \to \Alb{U}$ such that the square
\begin{tikzpicture}[baseline=(m.center)]
\matrix(m)[matrix of math nodes,
row sep=2.5em, column sep=2.5em,
text height=1.5ex, text depth=0.25ex, minimum width=3em, anchor=base, ampersand replacement=\&]
{G \times U \&  U \\
\Alb{U} \times \Alb{U} \& \Alb{U}\\};
\path[->] (m-1-1) edge (m-1-2);
\path[->] (m-1-1) edge node[left] {$f_0 \times \alb{U}$}(m-2-1);
\path[->] (m-2-1) edge (m-2-2);
\path[->] (m-1-2) edge node[right] {$\alb{U}$}(m-2-2);
\end{tikzpicture}
is commutative (where the bottom morphism is the group law of $\Alb{U}$). Similarly, we have a group morphism $f : \Alb{G} \to \Alb{X}$, and it agrees with the composed morphism $\Alb{G} \xrightarrow{f_0} \Alb{U} \xrightarrow{\Alb{i}} \Alb{X}$. The morphism $f_0$ is the quotient $\dfrac{G}{\aff{G}} \to \dfrac{G}{\aff{G} \cdot G_x}$ so its kernel $\dfrac{\aff{G} \cdot G_x}{\aff{G}} \simeq \dfrac{\aff{G}}{\aff{G} \cap G_x}$ is affine (as a quotient of $\aff{G}$) and proper (as a subgroup of $\Alb{G}$) hence it is finite. So $f_0$ is a finite morphism. The key point is that $f$ is finite too, and this is given by \cite[Cor. 3]{BRI_anv}. Therefore $\Alb{i}$ has finite kernel, and as expected it is a finite morphism.
\end{proof}

\begin{remark}
In particular, the Albanese torsor $\Albu{X}$ is homogeneous under an action of $G$ and the Albanese morphism $X \to \Albu{X}$ is surjective and equivariant. We will often use this in the sequel, together with Proposition \ref{prop_assoc_fiber_bundle}.
\end{remark}

\begin{proposition}\label{prop_codim0}
Let $X$ be a variety of Albanese codimension $0$ and let $G$ be a smooth connected algebraic group. Then $X$ is almost homogeneous under a faithful action of $G$ if and only if $G$ is an abelian variety and $X$ is a $G$-torsor.
\end{proposition}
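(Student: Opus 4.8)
The plan is to prove the nontrivial implication; the converse is immediate, since a $G$-torsor is acted on freely and transitively, hence is homogeneous with faithful action. For the forward implication, I would first reduce to the case $k=\overline{k}$. The Albanese codimension is invariant under algebraic field extensions (Proposition \ref{prop_Albcodim_field_ext}), almost homogeneity and faithfulness are preserved by base change, and the two desired conclusions — that $G$ is an abelian variety and that $X$ is a $G$-torsor — descend from $\overline{k}$ to $k$: properness, smoothness and connectedness descend by faithfully flat descent, and the orbit map $G\times X \to X\times X$, $(g,z)\mapsto(gz,z)$, is an isomorphism over $k$ as soon as it is one over $\overline{k}$. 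So I assume $k$ algebraically closed and fix a point $x$ in the open orbit $U=G/G_x$.

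The first step is to show that $G$ is an abelian variety, i.e. that $\aff{G}=1$. By Proposition \ref{prop_Alb_qhom}, $0=\dim X-\dim\Alb{X}\geq \dim U-\dim\Alb{U}\geq 0$, so $U$ also has Albanese codimension $0$. Proposition \ref{prop_Alb_hom} identifies $\alb{U}$ with the quotient $G/G_x \to G/(\aff{G}\cdot G_x)$, so $\dim U=\dim\Alb{U}$ forces $\aff{G}\cdot G_x$ and $G_x$ to have the same dimension. Since $\aff{G}$ is connected (Proposition \ref{prop_Rosenlicht_decomp}) and $(\aff{G}\cdot G_x)/G_x \cong \aff{G}/(\aff{G}\cap G_x)$, this dimension count forces the neutral component of $\aff{G}\cap G_x$ to be all of $\aff{G}$, whence $\aff{G}\subseteq G_x$.

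Because $\aff{G}$ is normal in $G$, the inclusion $\aff{G}\subseteq G_x$ propagates to every orbit point: $\aff{G}\cdot(gx)=g\,\aff{G}\,x=gx$, so $\aff{G}$ fixes the dense open $U$ pointwise. As $X$ is separated and reduced, $\aff{G}$ then acts trivially on all of $X$, and faithfulness forces $\aff{G}=1$; by Proposition \ref{prop_Rosenlicht_decomp}, $G=G/\aff{G}$ is an abelian variety. Now $G$ is commutative, so the kernel of its action on $U=G/G_x$ equals $\bigcap_{g} gG_xg^{-1}=G_x$; faithfulness gives $G_x=1$, hence $U\cong G$. Finally, since $G$ is proper the orbit $U$ is closed in $X$, while it is open and dense, so $U=X$; thus $X\cong G$ over $\overline{k}$, and the reduction above upgrades this to $X$ being a $G$-torsor over $k$.

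The main obstacle is the bookkeeping around the base field: the formation of $\aff{G}$ need not commute with inseparable extensions, so I carry out the structural argument over $\overline{k}$ — where $\aff{G}$ is smooth and Propositions \ref{prop_Alb_hom} and \ref{prop_Rosenlicht_decomp} apply cleanly — and then descend the two conclusions. The only genuinely geometric input is the dimension count turning Albanese codimension $0$ into the inclusion $\aff{G}\subseteq G_x$; everything afterwards is a standard normality/density argument combined with faithfully flat descent.
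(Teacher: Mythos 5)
Your proof is correct and follows essentially the same route as the paper's: reduce to $k=\overline{k}$, use the dimension count $\dim (\aff{G}\cdot G_x)=\dim G_x$ to obtain $\aff{G}\subseteq G_x$, kill $\aff{G}$ by normality plus faithfulness, then kill $G_x$ by commutativity plus faithfulness, and conclude $U=X$ from properness of the orbit. The only difference is cosmetic: you spell out the descent from $\overline{k}$ and the schematic-density argument (and are, if anything, slightly more careful than the paper in deducing $\dim U=\dim\Albu{U}$ from the inequality in Proposition \ref{prop_Alb_qhom} rather than asserting $\Albz{X}=G/(\aff{G}\cdot G_x)$ outright, since $X$ is not assumed normal here).
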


\begin{proof}
Assume that $X$ is almost homogeneous. We can assume that $k$ is algebraically closed, so that the open orbit is isomorphic to $G / G_x$ for some $x \in X(k)$. The Albanese variety of $X$ is then $G / (\aff{G} \cdot G_x)$. By assumption, we have $\dim G_x = \dim \aff{G} \cdot G_x$. Taking the neutral components, we get $\dim (G_x)^\circ = \dim \aff{G} \cdot (G_x)^\circ$, so that $(G_x)_{\textrm{red}}^\circ = \aff{G} \cdot (G_x)_{\textrm{red}}^\circ$. Hence $\aff{G}$ is a subgroup of the smooth connected group $(G_x)_{\textrm{red}}^\circ$, and in particular a subgroup of $G_x$. But $\aff{G}$ is a normal subgroup of $G$ and $G$ acts faithfully on $G/G_x$, so $\aff{G}$ is trivial. Therefore $G$ is an abelian variety. As the action is faithful, the subgroup $G_x$ is trivial. The open orbit is thus a proper variety (isomorphic to $G$), so it is closed in $X$ and it is the whole $X$.
\end{proof}

\section{Equivariant completions}\label{sect_completion}

We recall an important observation on equivariant morphisms to homogeneous spaces. We will apply it to the Albanese morphism in order to study the completions of almost homogeneous varieties.

\begin{proposition}\label{prop_assoc_fiber_bundle}\cite[Sect. 2.5]{BRI_anv}
Let $X$ be a variety and let $G$ be an algebraic group acting on $X$. Let $f : X \to G/H$ be a $G$-equivariant morphism where $H$ is a subgroup of $G$. Let $C = f^{-1}(0)$ be the fiber at the base point of $G/H$. Then the square
\begin{center}
\begin{tikzpicture}[baseline=(m.center)]
\matrix(m)[matrix of math nodes,
row sep=2.5em, column sep=2.5em,
text height=1.5ex, text depth=0.25ex, minimum width=3em, anchor=base, ampersand replacement=\&]
{G \times C \&  G \\
X \& G/H\\};
\path[->] (m-1-1) edge node[above] {$\pr_{1}$}(m-1-2);
\path[->] (m-1-1) edge (m-2-1);
\path[->] (m-2-1) edge node[below] {$f$}(m-2-2);
\path[->] (m-1-2) edge (m-2-2);
\end{tikzpicture}
\end{center}
is cartesian, where the left arrow is the restriction of the action $G \times X \to X$. We say that $X$ is the associated fiber bundle and we denote it by $X = G \stackrel{H}{\times} C$.

Moreover, if $M$ is an ample $H$-linearized line bundle on $C$ then $L = G \stackrel{H}{\times} M$ exists and is an $f$-ample $G$-linearized line bundle on $X$.
\end{proposition}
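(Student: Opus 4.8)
The plan is to produce the cartesian square by a ``shearing'' isomorphism, read off the fiber bundle description from the fact that $\pi : G \to G/H$ is an $H$-torsor, and then obtain the line bundle and its relative ampleness by faithfully flat descent along $\pi$. Before anything else I would observe that since $h \cdot 0 = 0$ for every $h \in H$, the subgroup $H$ fixes the base point of $G/H$ and therefore acts on the scheme-theoretic fiber $C = f^{-1}(0)$; this is what makes an ``$H$-linearized line bundle on $C$'' meaningful in the second part.

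For the cartesian square I would consider the automorphism of $G \times X$ given on points by $(g,x) \mapsto (g, g^{-1} \cdot x)$, with inverse $(g,x) \mapsto (g, g \cdot x)$; this uses only the group law of $G$ and the action on $X$. The fiber product $G \times_{G/H} X$ is the closed subscheme of $G \times X$ cut out by $\pi \circ \pr_G = f \circ \pr_X$. Using the equivariance of $f$, one checks that the shearing automorphism restricts to an isomorphism of this subscheme onto $G \times C$: on the subscheme one has $f(g^{-1}\cdot x) = g^{-1}\cdot f(x) = g^{-1}\cdot \pi(g) = 0$, so the second coordinate factors through $C = f^{-1}(0)$, and the inverse shear sends $G \times C$ back into the subscheme. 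By Yoneda this identification is scheme-theoretic, and under it the projection to $G$ becomes $\pr_1$ while the projection to $X$ becomes the action map; hence the square is cartesian.

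Next, $\pi : G \to G/H$ is faithfully flat of finite type and an $H$-torsor for right translation. Because the square is cartesian, the action map $G \times C \to X$ is the base change of $\pi$ along $f$, so it is again an $H$-torsor; transporting the torsor structure through the shearing isomorphism gives the standard action $(g,c)\cdot h = (gh, h^{-1}c)$, whose quotient is $X$. This is exactly the assertion $X = G \stackrel{H}{\times} C$. For the line bundle, given an $H$-linearized $M$ on $C$, I would form $\pr_C^* M$ on $G \times C$; combining the $H$-linearization of $M$ with the $H$-action on $G \times C$ equips $\pr_C^* M$ with an $H$-equivariant structure compatible with the torsor $G \times C \to X$, so by fppf descent of quasi-coherent sheaves it descends to a line bundle $L$ on $X$ restricting to $M$ over $C$. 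The left translation of $G$ on the first factor commutes with the $H$-action, hence descends to a $G$-linearization of $L$ lying over the $G$-action on $X$. Finally, pulling $f$ back along the fppf cover $\pi$ returns $\pr_1 : G \times C \to G$ with $L$ pulling back to $\pr_C^* M$, which is relatively ample over $G$ since $M$ is ample on $C$ and $G \times C \to G$ is the constant family; relative ampleness descends along the faithfully flat quasi-compact map $\pi$, so $L$ is $f$-ample.

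The main obstacle is bookkeeping rather than any deep idea: one must check that the shearing identification is genuinely scheme-theoretic (which is why $C$ must be taken as the scheme-theoretic fiber and Yoneda is invoked) and that every descent step is legitimate, i.e. that $\pi : G \to G/H$ is fppf so that descent of sheaves and of relative ampleness apply. The most delicate point to verify carefully is the $H$-equivariant structure on $\pr_C^* M$ and its compatibility with the torsor, since it is this compatibility that lets $L$ and its $G$-linearization descend at once.
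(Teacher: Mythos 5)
Your proof is correct, and it is essentially the argument of the source: the paper gives no proof of this proposition, citing \cite[Sect.\ 2.5]{BRI_anv}, where the cartesian square is obtained by the same shearing isomorphism $(g,x)\mapsto(g,g^{-1}x)$ and the line bundle $G \stackrel{H}{\times} M$ with its $G$-linearization and $f$-ampleness is produced exactly as you do, by fppf descent along the $H$-torsor $G \times C \to X$ (using only that $\pi : G \to G/H$ is faithfully flat, which matters since $H$ need not be smooth). Your identification of the delicate points --- the scheme-theoretic fiber, the equivariant structure on $\pr_C^* M$, and descent of relative ampleness --- matches the checks required there.
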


\begin{proposition}
Let $X$ be a normal variety of Albanese codimension $1$, which is almost homogeneous under the action of a smooth connected algebraic group $G$. Then $X$ has a $G$-linearizable line bundle which is ample relatively to $\alb{X} : X \to \Albu{X}$. In particular $X$ is quasi-projective.
\end{proposition}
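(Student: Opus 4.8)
The plan is to apply the associated fiber bundle structure of Proposition \ref{prop_assoc_fiber_bundle} to the Albanese morphism $\alb{X} : X \to \Albu{X}$, and to produce the required line bundle by a \emph{canonical} (hence automatically $G$-linearized) construction directly on $X$, so as to avoid any descent from an extension of $k$.

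By the Remark following Proposition \ref{prop_Alb_qhom}, $\alb{X}$ is $G$-equivariant and surjective and $\Albu{X}$ is homogeneous under $G$; being a torsor under the abelian variety $\Albz{X}$, it is smooth and projective. To understand the fibers I would pass to $\overline{k}$, where $\Albu{X} = G/H$ with $H = \aff{G} \cdot G_x$, so that $X = G \stackrel{H}{\times} C$ with $C = \alb{X}^{-1}(0)$ a one-dimensional scheme (as the Albanese codimension is $1$). The fiber of the open orbit is the orbit $\aff{G}/(\aff{G} \cap G_x)$, a one-dimensional homogeneous space under the affine group $\aff{G}$, hence a smooth curve, open and dense in $C$. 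Accordingly $C$ is either non-proper, and then affine, or proper, and then a form of $\PP^1$; since these are geometric properties and $\alb{X}$ is fppf-locally trivial over $\Albu{X}$, the same dichotomy holds for the morphism $\alb{X}$ over $k$.

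In the non-proper case, $C$ is an affine curve, so $\alb{X}$ is an affine morphism (affineness descends along the fppf cover $G \to G/H$), and then $L = \mathcal{O}_X$ is $\alb{X}$-ample and tautologically $G$-linearized. In the proper case, $\alb{X}$ is smooth and proper with geometric fibers $\PP^1$, so the relative cotangent sheaf $\Omega_{X/\Albu{X}}$ is a line bundle; its dual restricts to $\mathcal{O}_{\PP^1}(2)$ on each geometric fiber, hence has positive degree there and is $\alb{X}$-ample (ampleness being checkable fiberwise for proper morphisms). As $\alb{X}$ is $G$-equivariant, the sheaf $\Omega_{X/\Albu{X}}$ carries a canonical $G$-linearization, and so does its dual. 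In both cases $X$ thus admits a $G$-linearized $\alb{X}$-ample line bundle $L$; since $\Albu{X}$ is projective, the relative ampleness of $L$ then yields that $X$ is quasi-projective.

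The main obstacle is the geometric analysis of the fiber $C$ carried out over $\overline{k}$: one must check that $C$ is a reduced (indeed smooth) curve, so that the affine-versus-$\PP^1$ dichotomy applies and, in the proper case, that $\Omega_{X/\Albu{X}}$ is a genuine line bundle. This should follow from the normality of $X$ together with the homogeneous structure of the open orbit's fiber, but it requires some care over imperfect fields, where the isotropy group $H = \aff{G} \cdot G_x$ — and hence the fiber-bundle quotient map $G \times C \to X$ — need not be smooth.
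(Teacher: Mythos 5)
There is a genuine gap, and it sits exactly where you flagged it: the dichotomy ``$C$ proper $\Rightarrow$ $C$ is a form of $\PP^1$ and $\alb{X}$ is smooth'' is not just delicate, it is false in positive characteristic, and the paper's proof is organized precisely around repairing this. Two distinct problems occur. First, you conflate the fiber of $(\alb{X})_{\overline{k}}$ with the fiber of $\alb{X_{\overline{k}}}$: by Proposition \ref{prop_Albcodim_field_ext} these differ by a purely inseparable isogeny, so the fiber you actually work with is an infinitesimal thickening of $\aff{G}/(\aff{G} \cap G_x)$ and need not be reduced, let alone smooth. Second, even setting that aside, the isotropy group $H = \aff{G} \cdot G_x$ need not be smooth, so $G \times C \to X$ is only faithfully flat, and normality of $X$ does \emph{not} descend to $C$. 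This is not a hypothetical: for a pseudo-abelian variety $G$ over an imperfect field (see the remark after Theorem \ref{theo_class_section}), the Albanese fibers are non-reduced, and the same holds for the normal equivariant completion $\overline{X}$ of $G$ --- a proper, normal, almost homogeneous variety of Albanese codimension $1$ to which the proposition must apply. There $\alb{\overline{X}}$ is not smooth, $\Omega_{\overline{X}/\Albu{\overline{X}}}$ is not invertible, and your canonical relative anticanonical bundle simply does not exist. (Your non-proper branch, by contrast, is essentially sound: irreducibility of $C$ follows from the flat-sweep argument the paper uses in Proposition \ref{prop_equiv_divhom_Alb}, affineness is insensitive to nilpotents, affineness of morphisms descends fppf-locally, and $\mathcal{O}_X$ is indeed relatively ample and trivially linearized --- in that case your argument is even slicker than the paper's.)

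The idea missing from your proposal is the paper's Frobenius-kernel trick, which is exactly the device for ``smoothing out'' $H$: one replaces $X$ by the finite categorical quotient $X/G_n$ (where $G_n = \ker F^n$), for $n$ large enough that $H/H_n$ is smooth. Then $G/G_n \to (G/G_n)/(H/H_n)$ is a smooth morphism, so the base-changed map $(G/G_n) \times C' \to X/G_n$ is smooth, normality of $X/G_n$ descends to the curve $C'$, and one takes \emph{any} ample line bundle $M$ on $C'$: since $H/H_n$ is affine, a power of $M$ is $(H/H_n)$-linearizable, and Proposition \ref{prop_assoc_fiber_bundle} produces a relatively ample $G/G_n$-linearized bundle on $X/G_n$. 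Pulling back along the finite equivariant morphism $X \to X/G_n$ yields the desired bundle on $X$; no case distinction between affine and proper fibers is needed, which is what makes the paper's argument uniform. Finally, note that the paper does not avoid descent from a field extension: it reduces to the case of a $k$-rational point in the open orbit via the norm construction $N_{k'/k}$ on line bundles, whereas your plan to circumvent this by canonical constructions only survives in the affine branch.
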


\begin{proof}
By extending scalars to some finite separable algebraic field extension $k'/k$ if necessary, we can assume that the open orbit has a $k$-rational point $x$. Indeed, we can adapt the arguments of \cite[Lemma 2.10]{BRI_anv}. Assume that $L'$ is a linearizable line on $X_{k'}$ which is ample relatively to $\alb{X_{k'}} : X_{k'} \to \Albu{X_{k'}} = \Albu{X}_{k'}$. Consider the norm $L = N_{k'/k}(L')$. By \cite[Lemma 2.2]{BRI_anv} this is a linearizable line bundle on $X$. Moreover, by \cite[II, Prop. 4.6.13.ii and Prop. 6.6.1]{EGA}, $L$ is ample relatively to $\alb{X}$.

The Albanese variety is $\Alb{X} = G/H$ where $H = \aff{G} \cdot G_x$. Let $C= \alb{X}^{-1}(0)$ be the fiber at the base point of $G/H$. The main idea of the proof is that, since $C$ has dimension $1$, it has an ample line bundle. In order to use Proposition \ref{prop_assoc_fiber_bundle}, we need an $H$-linearizable line bundle. The situation would be nice if $C$ were normal because, as $H$ is an affine algebraic group, some power of any line bundle on $C$ would be linearizable. Hence, our goal is to reduce to the case where $C$ is normal. This is done by reducing to the case where $H$ is smooth.

For $n \geq 0$, we denote by $G_n$ the kernel of the iterated Frobenius morphism $F^n : G \to G^{(p^n)}$. We have $H_n = H \cap G_n$ and, for $n$ large enough, the quotient $H/H_n$ is smooth (see \cite[Exp. VII A, Prop. 8.3]{SGA3}). By \cite[Lemma 2.5]{BRI_anv}, there exists a categorical quotient $f : X \to X/G_n$ which is finite, and $X/G_n$ is a normal variety. Moreover, by \cite[Lemma 2.8]{BRI_anv}, there exists a unique action of $G/G_n$ on $X/G_n$ such that $f$ is equivariant with respect to $G \to G/G_n$. If $L'$ is an ample line bundle on $X/G_n$ then, since $f$ is affine, the pullback $L = f^* L'$ is an ample line bundle on $X$. Moreover, the morphism $f$ is equivariant so the pullback of a $G/G_n$ linearization of $L'$ is a $G$-linearization of $L$.

The morphism $\alb{X} : X \to \dfrac{G}{H}$ is $G$-equivariant so it factorizes as a $G/G_n$-equivariant morphism $b : X/G_n \to \left(\dfrac{G}{H}\right)/G_n \simeq \dfrac{G}{H \cdot G_n} \simeq \dfrac{G/G_n}{H/H_n}$. We have $\dim \dfrac{G/G_n}{H/H_n} = \dim \dfrac{G}{H} = \dim X-1 = \dim (X/G_n) - 1$ so the fiber $b^{-1}(0)$ has dimension $1$. Therefore, we can assume that $H$ is smooth.

Then the morphism $G \to G/H$ is smooth (see \cite[Prop. 2.6.5]{BRI_structure}) so by base change the morphism $G \times C \to X$ is smooth too. Since $X$ is a normal variety, $G \times C$ is normal too, as well as $C$ (see \cite[IV4, Prop. 17.3.3]{EGA}). Finally, let $M$ be an ample line bundle on $C$. By \cite[Lemma 2.9 and Prop. 2.12]{BRI_anv}, there exists some power of $M$ which is $H$-linearizable and we can use Proposition \ref{prop_assoc_fiber_bundle}.
\end{proof}

This enables us to consider equivariant completions of $X$. Indeed, we can use the following general result, which is a direct consequence of \cite[Th. 2]{BRI_anv}.

\begin{proposition}
Every normal quasi-projective variety, equipped with an action of a smooth connected algebraic group, admits a normal equivariant completion.
\end{proposition}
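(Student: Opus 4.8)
The plan is to obtain the statement almost immediately from \cite[Th. 2]{BRI_anv}, the only genuine work being the passage to a normalization. Since $X$ is normal and quasi-projective and carries an action of the smooth connected group $G$, it fits the hypotheses needed to invoke \cite[Th. 2]{BRI_anv}: quasi-projectivity is precisely the condition that allows that theorem to apply (for instance $X$ is itself a $G$-stable quasi-projective open, so any covering hypothesis of that result holds trivially). Applying it yields a proper $G$-variety $Y$ together with a $G$-equivariant open immersion $X \hookrightarrow Y$ with dense image.

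If \cite[Th. 2]{BRI_anv} does not already guarantee normality, I would pass to the normalization $\nu : \overline{X} \to Y$. Over a field $\nu$ is a finite birational morphism, so $\overline{X}$ is proper (a finite morphism is proper, and properness is stable under composition), and it is a variety since it is integral and contains the geometrically integral dense open $X$. Because $X$ is already normal, $\nu$ restricts to an isomorphism over the image of $X$, which produces the desired open immersion $X \hookrightarrow \overline{X}$ with dense image. The remaining point is to transport the $G$-action from $Y$ to $\overline{X}$.

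The hard part will be this lifting step, and it is exactly where the smoothness of $G$ enters. Since $G$ is smooth it is geometrically normal, so $G \times \overline{X}$ is normal and $\id_G \times \nu : G \times \overline{X} \to G \times Y$ is finite and birational, hence is the normalization of $G \times Y$. By the universal property of normalization, the composite $G \times \overline{X} \to G \times Y \to Y$ (the second arrow being the action on $Y$) factors uniquely through $\nu$, giving a morphism $\beta : G \times \overline{X} \to \overline{X}$. One then checks that $\beta$ is a group action — associativity and the unit law hold on a dense open and, $\nu$ being separated, therefore everywhere — and that $X \hookrightarrow \overline{X}$ is equivariant, which is immediate as $\nu$ is an isomorphism over $X$. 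This exhibits $\overline{X}$ as a normal equivariant completion of $X$, and apart from this normalization argument everything is formal from \cite[Th. 2]{BRI_anv} together with the standard properties of normalization over a field.
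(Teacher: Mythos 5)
Your proposal is correct and takes essentially the route the paper intends: the paper gives no written proof, stating the result is a direct consequence of \cite[Th. 2]{BRI_anv}, which provides a $G$-equivariant immersion of $X$ into the (proper) projectivization of a $G$-linearized vector bundle over an abelian variety, so that taking the closure there and then normalizing --- exactly your argument --- is the implicit derivation. Your treatment of the only nontrivial point, lifting the action to the normalization using smoothness of $G$ (so that $G \times \overline{X}$ is normal and the universal property of normalization applies) and checking the action axioms by density and separatedness, is the standard way to complete that sketch.
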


\begin{proposition}\label{prop_uniqueness_completion}
Let $X$ be a normal variety and let $G$ be a smooth connected algebraic group acting on $X$. If $\overline{X}$ is a normal equivariant completion of $X$ such that the boundary $\overline{X} \setminus X$ is the union of $G$-homogeneous divisors then $\overline{X}$ is unique up to isomorphism. Moreover if $X$ is regular then so is $\overline{X}$.
\end{proposition}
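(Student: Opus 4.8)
The plan is to prove uniqueness by a standard birational argument: given two such completions $\overline{X}_1$ and $\overline{X}_2$, I would consider the normalization $\overline{X}_3$ of the closure of the diagonal image of $X$ inside $\overline{X}_1 \times_k \overline{X}_2$, together with its two projections $p_i : \overline{X}_3 \to \overline{X}_i$. Each $p_i$ is a proper birational $G$-equivariant morphism restricting to the identity on the common open subset $X$, and the goal is to show that each $p_i$ is an isomorphism, whence $\overline{X}_1 \simeq \overline{X}_3 \simeq \overline{X}_2$. Since $X$ is dense, $p_i$ is an isomorphism over $X$, so the content is entirely concentrated on the boundary: I must show $p_i$ does not contract any divisor and is quasi-finite (hence, being proper and birational between normal varieties, an isomorphism by Zariski's Main Theorem).

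The key step exploits the hypothesis that the boundary consists of $G$-homogeneous divisors. First I would argue that $p_i$ cannot contract a divisor: the exceptional locus of $p_i$ is $G$-stable (since $p_i$ is equivariant), and its image in $\overline{X}_i$ is a $G$-stable closed subset contained in the boundary $\overline{X}_i \setminus X$. If $p_i$ contracted a prime divisor $E \subseteq \overline{X}_3$, its image would be a $G$-stable closed subset of a homogeneous divisor $D$ of strictly smaller dimension; but a homogeneous divisor $D$ has no proper nonempty $G$-stable closed subset of smaller dimension than each of its components, because $G(\overline{k})$ acts transitively on the $\overline{k}$-points of each irreducible component of $D_{\overline{k}}$. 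This forces the image to be a union of components of $D$, so $p_i(E)$ has the same dimension as $E$, a contradiction. Consequently $p_i$ is a proper birational morphism of normal varieties that is an isomorphism in codimension $1$, and then by Zariski's Main Theorem (together with the fact that the fibers over codimension $\geq 1$ points of the boundary cannot contain curves, again by the homogeneity-transitivity argument applied componentwise) $p_i$ is an isomorphism.

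The main obstacle I expect is handling the boundary carefully over a non-algebraically-closed field, where the homogeneous divisors need not be geometrically irreducible and the notion of homogeneity is the componentwise one introduced in the conventions. I would reduce the transitivity arguments to $\overline{k}$: normality, properness, birationality and the property of being an isomorphism are all stable under the faithfully flat base change $\Spec \overline{k} \to \Spec k$, so it suffices to prove $(p_i)_{\overline{k}}$ is an isomorphism, where each boundary divisor becomes a disjoint union of genuine homogeneous varieties and the transitivity of $G(\overline{k})$ on component $\overline{k}$-points is available directly. For the final assertion, if $X$ is regular then $\overline{X}$ is regular because $\overline{X}$ is already known to be the canonical object $\overline{X}_3$ built functorially, and regularity of the completion will be established in the codimension-one points via the structure of the boundary divisors; more precisely, once uniqueness identifies $\overline{X}$ with the specific completion produced for instance by Proposition~\ref{prop_assoc_fiber_bundle}, one reads off regularity from the local structure of an associated fiber bundle over a regular base whose fiber is a smooth curve, the boundary being a disjoint union of sections.
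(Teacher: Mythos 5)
Your uniqueness argument is correct but takes a genuinely different route from the paper's. The paper considers the rational map $f : \overline{X} \dashrightarrow \overline{X}'$ extending the identity, notes (via a lemma on equivariant rational maps) that its domain of definition $U$ is a $G$-stable open subset containing $X$, uses normality of $\overline{X}$ and properness of $\overline{X}'$ to get $\codim_{\overline{X}}(\overline{X} \setminus U) \geq 2$, and concludes $D \subseteq U$ for every boundary divisor $D$ by homogeneity, whence $U = \overline{X}$; no graph closure, no contracted-divisor analysis, and no Zariski's Main Theorem are needed. Your version --- normalized graph closure $\overline{X}_3$ with two proper birational equivariant projections, exclusion of contracted divisors and of positive-dimensional boundary fibers via transitivity of $G(\overline{k})$ on the components, then ZMT --- is sound, and the key mechanism (a nonempty $G$-stable closed subset of the boundary must contain whole components of the boundary divisors, so ``small'' exceptional loci are empty) is exactly the paper's; you simply pay for extra machinery (lifting the action to the normalization, semicontinuity of fiber dimension, ZMT) where the paper gets away with the codimension-two bound for rational maps from a normal variety to a proper one. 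One slip in your reduction step: normality is \emph{not} stable under the base change $\Spec \overline{k} \to \Spec k$ when $k$ is imperfect (normal does not imply geometrically normal), so you cannot literally ``prove $(p_i)_{\overline{k}}$ is an isomorphism'' by running ZMT over $\overline{k}$; instead run ZMT over $k$, where the targets are normal, and transport only the transitivity and dimension statements to $\overline{k}$ --- harmless, since dimensions of images and fibers are insensitive to field extension.

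The regularity assertion is where your proposal has a genuine gap. You propose to identify $\overline{X}$ with ``the specific completion produced by Proposition~\ref{prop_assoc_fiber_bundle}'' and read off regularity from an associated fiber bundle over a regular base with smooth curve fibers, the boundary being a union of sections. But the proposition at hand makes no such structure available: $X$ is an arbitrary normal variety with a $G$-action, with no equivariant morphism to a homogeneous space $G/H$, no curve fibration, and no reason for the boundary divisors to be sections of anything --- that picture belongs to the later Albanese-codimension-one setting, not to this general statement. Moreover, the appeal to $\overline{X}$ being ``the canonical object built functorially'' yields no regularity at all: normalized graph closures of regular varieties can perfectly well be singular. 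The correct argument is the same one-line mechanism you already used for uniqueness, applied to the singular locus: the regular locus $\reg{\overline{X}}$ is a $G$-stable open subset (the paper isolates this as a lemma, using smoothness of $G$ and flatness of the action map), it contains $X$ by hypothesis, and its complement has codimension $\geq 2$ because $\overline{X}$ is normal (regularity in codimension one). Hence every homogeneous boundary divisor meets $\reg{\overline{X}}$ and is therefore entirely contained in it, so $\overline{X} = \reg{\overline{X}}$.
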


The proof of Proposition \ref{prop_uniqueness_completion} follows an idea of Ahiezer, given in \cite{AHI2} in the setting of complex algebraic varieties. We need the two following well-known lemmas, for which we found no suitable reference.

\begin{lemma}
Let $Y$ and $Y'$ be varieties, let $G$ be a smooth connected algebraic group acting on $Y$ and $Y'$, and let $f : Y \dashrightarrow Y'$ be an equivariant rational map. The domain of definition of $f$ is a $G$-stable open subscheme of $Y$.
\end{lemma}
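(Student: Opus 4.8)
The plan is to show that the indeterminacy locus of $f$ is preserved by every translation coming from $G$, so that its complement---the domain $U = \dom(f)$---is $G$-stable. The key geometric observation is that, for a fixed group element, translation is an isomorphism, and an isomorphism carries the domain of a rational map onto the domain of the composed map.

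First I would reduce to the case $k = \overline{k}$. The formation of the domain of definition of a rational map commutes with the flat base change $\overline{k}/k$, so that $\dom(f_{\overline{k}}) = U_{\overline{k}}$; and $G$-stability of the open subscheme $U$ is equivalent to $G_{\overline{k}}$-stability of $U_{\overline{k}}$, since stability amounts to the inclusion $G \times U \subseteq a^{-1}(U)$ of open subschemes of $G \times Y$ (where $a$ denotes the action morphism), and such an inclusion can be tested after the faithfully flat extension $\overline{k}/k$. Thus it suffices to prove the statement over $\overline{k}$.

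Next, working over $\overline{k}$, I would fix $g \in G(\overline{k})$ and let $t_g$ denote translation by $g$, both on $Y$ and on $Y'$; each is an automorphism. Equivariance of $f$ means precisely that $f \circ t_g = t_g \circ f$ as rational maps. Since $t_g$ is an isomorphism of the target, $\dom(t_g \circ f) = \dom(f) = U_{\overline{k}}$, whereas $\dom(f \circ t_g) = t_g^{-1}(U_{\overline{k}})$ because $t_g$ is an isomorphism of the source. Comparing the two, $t_g^{-1}(U_{\overline{k}}) = U_{\overline{k}}$, so $U_{\overline{k}}$ is stable under $t_g$ for every $g \in G(\overline{k})$. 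As $G$ is smooth, hence reduced, it is enough to verify the inclusion $G_{\overline{k}} \times U_{\overline{k}} \subseteq a_{\overline{k}}^{-1}(U_{\overline{k}})$ on $\overline{k}$-points, which is exactly what the pointwise stability gives; an open subscheme of a variety that agrees with the whole space on $\overline{k}$-points is everything. This yields $G_{\overline{k}}$-stability of $U_{\overline{k}}$, and by the descent step the $G$-stability of $U$.

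The main obstacle I anticipate is not the geometric core---that translations preserve the indeterminacy locus---but the bookkeeping needed to pass between the pointwise statement over $\overline{k}$ and the scheme-theoretic assertion over $k$: namely the compatibility of $\dom(f)$ with the base change $\overline{k}/k$ and the descent of $G$-stability along this extension. I would also take care to interpret \emph{equivariant rational map} so that restricting the equivariance relation on $G \times Y$ to each geometric slice $\{g\} \times Y$ is legitimate; this is harmless because each such slice meets the dense open domain of $f$, so the composed rational maps $f \circ t_g$ and $t_g \circ f$ are genuinely defined and equal.
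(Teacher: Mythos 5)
Your proof is correct, but it takes a genuinely different route from the paper's. The paper never reduces to $\overline{k}$ and never argues pointwise: writing $\alpha, \alpha'$ for the actions, $\iota$ for inversion and $U = \dom(f)$, it compares the rational map $\id_G \times f$ on $G \times Y$ with the composite $(g,y) \mapsto (g, g^{-1}f(gy))$, which agree by equivariance; it then computes the common domain in two ways --- via \cite[IV4, Prop.~20.3.11]{EGA} for the flat projection $\pr_2$, giving $\dom(\id_G \times f) = G \times U$, and via the automorphisms $(\id_G,\alpha)$ of $G \times Y$ and $\id_G \times (\alpha' \circ (\iota,\id_Y))$ of $G \times Y'$, giving $\alpha^{-1}(U)$ --- and concludes $\alpha^{-1}(U) = G \times U$ in one stroke, which is exactly scheme-theoretic stability. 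In effect the paper runs your translation argument \emph{universally}, with $g$ a $G$-valued point of $G$, so the slice-by-slice restriction and the passage from $\overline{k}$-points back to schemes never arise. Your version trades this for two pieces of bookkeeping, both of which you rightly flag and both of which do go through, though they deserve more than the justifications you give: (i) the identity $\dom(f_{\overline{k}}) = \dom(f)_{\overline{k}}$ is not a formal consequence of flatness alone; it follows from the same EGA lemma applied to the faithfully flat morphism $Y_{\overline{k}} \to Y$, combined with the geometric integrality of $Y$ and separatedness of $Y'_{\overline{k}}$ (so that a morphism representing $f \circ \pr$ over $k$ lifts to one representing $f_{\overline{k}}$, two morphisms from a reduced scheme to a separated one agreeing on a dense open being equal); (ii) your slice argument for $f \circ t_g = t_g \circ f$ works because $Y$ is irreducible, so $U \cap t_g^{-1}(U)$ is dense in $Y$ and the slice $\{g\} \times Y$ meets the locus where the two composites on $G \times Y$ are defined and agree in a dense open --- a dense open of $G \times Y$ can miss a whole slice, so this density check is genuinely needed. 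One difference in what is used: your argument leans on the smoothness of $G$ through the density of $G(\overline{k})$ (in your final step, reducedness is in fact superfluous, since an inclusion of open subschemes is purely topological and checkable on closed points by the Jacobson property), whereas the paper uses smoothness and connectedness only to make $G \times Y$ a variety so that the rational-map calculus applies. Your approach buys transparency --- the geometric core (translations are automorphisms, and automorphisms preserve domains) is laid bare --- while the paper's buys economy, avoiding all base-change and density considerations.
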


\begin{proof}
We set $U = \dom(f)$. We denote by $\alpha : G \times Y \to Y$ and $\alpha' : G \times Y' \to Y'$ the two actions, and by $\iota : G \to G$ the inverse map. We want to prove $\alpha^{-1}(U) = G \times U$. Since $f$ is equivariant, the two rational maps $\id_G \times f : G \times Y \dashrightarrow G \times Y'$ and 
\[\left\{\begin{array}[c]{ccccccc}G \times Y & \xrightarrow{(\id_G,\alpha)} & G \times Y & \xdashrightarrow{\id_G \times f} & G \times Y' & \xrightarrow{\id_G \times (\alpha' \circ (\iota, \id_Y))} & G \times Y' \\ (g,y) & \xmapsto{\hphantom{(\id_G,\alpha)}} & (g,gy) & \xdashmapsto{\hphantom{\id_G \times f}} & (g,f(gy)) & \xmapsto{\hphantom{\id_G \times (\alpha' \circ (\iota, \id_Y))}} & (g,g^{-1} f(gy)) \end{array}\right.\]
agree, so they have the same domain of definition. On the one hand, the composite map $\pr_1 \circ (\id_G \times f) : G \times Y \to G$ is defined everywhere and agrees with $\pr_1 : G \times Y \to G$, so the domain of definition of $\id_G \times f$ is the one of $\pr_2 \circ (\id_G \times f) : G \times Y \dashrightarrow Y'$. Moreover $\pr_2 \circ (\id_G \times f) = f \circ \pr_2$. Hence by \cite[IV4, Prop. 20.3.11 p242]{EGA} we have $\dom(\id_G \times f) = \dom(\pr_2 \circ f) = \pr_2^{-1}(\dom(f)) = G \times U$. On the other hand, the morphism $\id_G \times (\alpha' \circ (\iota, \id_Y))$ is an automorphism so $\dom(\id_G \times f) = \dom((\id_G \times f) \circ (\id_G,\alpha)) = (\id_G,\alpha)^{-1} (\dom(\id_G \times f)) = (\id_G,\alpha)^{-1} (G \times U) = \alpha^{-1}(U)$.
\end{proof}

\begin{lemma}
Let $Y$ be a variety and let $G$ be a smooth connected algebraic group acting on $Y$. The regular locus of $Y$ is a $G$-stable open subscheme of $Y$.
\end{lemma}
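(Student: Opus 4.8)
The plan is to exhibit the regular locus $\reg{Y}$ as the common preimage of itself under both the action morphism and the second projection. Write $\alpha : G \times Y \to Y$ for the action and $\pr_2 : G \times Y \to Y$ for the projection. The regular locus is open because $Y$ is of finite type over a field, hence excellent, so that its non-regular locus is closed (see \cite[IV$_2$, 6.12.6 and 7.8.3]{EGA}); the same applies to $G \times Y$. I would then prove the set-theoretic equality $\alpha^{-1}(\reg{Y}) = \pr_2^{-1}(\reg{Y})$. Since $\pr_2^{-1}(\reg{Y}) = G \times \reg{Y}$, this equality reads ``$gy$ is a regular point if and only if $y$ is'', which is exactly the $G$-stability of the open subscheme $\reg{Y}$ (in the same sense as in the preceding lemma, where $G$-stability of an open $U$ is the equality $\alpha^{-1}(U) = G \times U$).

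The key point is that \emph{both} $\pr_2$ and $\alpha$ are smooth morphisms. For $\pr_2$ this is clear, as it is the base change to $Y$ of the smooth structure morphism $G \to \Spec k$. For $\alpha$, I would introduce the morphism $\Phi : G \times Y \to G \times Y$, $(g,y) \mapsto (g, gy)$, whose inverse $(g,y) \mapsto (g, g^{-1}y)$ is built from the action and the inversion $\iota$ of $G$; the action axioms show the two are mutually inverse, so $\Phi$ is an automorphism. Since $\alpha = \pr_2 \circ \Phi$, the action morphism is smooth as the composite of an isomorphism with a smooth morphism.

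It then remains to recall that for any smooth morphism $f : Z \to Y$ one has $f^{-1}(\reg{Y}) = \reg{Z}$. Indeed $f$ is flat, so regularity of $\mathcal{O}_{Z,z}$ forces regularity of $\mathcal{O}_{Y,f(z)}$; conversely, the fibers of a smooth morphism are geometrically regular, hence regular, so regularity of the base at $f(z)$ together with regularity of the fiber yields regularity of $Z$ at $z$ (see \cite[IV$_2$, Prop. 6.5.1 and 6.5.2]{EGA}). Applying this to the two smooth morphisms $\alpha$ and $\pr_2$ gives
\[
\alpha^{-1}(\reg{Y}) = \reg{(G \times Y)} = \pr_2^{-1}(\reg{Y}) = G \times \reg{Y},
\]
which is the desired $G$-stability.

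The main subtlety to keep in mind — and the reason for routing the argument through smooth morphisms — is that over an imperfect field regularity and smoothness do \emph{not} coincide. One is therefore not allowed to argue by passing to $\overline{k}$ and invoking that each translation by a $\overline{k}$-point is an automorphism preserving singularities, since that would tacitly replace the regular locus by the (smaller) smooth locus. Working with the smooth morphisms $\alpha$ and $\pr_2$ directly over $k$ avoids this pitfall altogether: the compatibility $f^{-1}(\reg{Y}) = \reg{Z}$ holds for every smooth $f$ irrespective of the perfectness of the base, so no base change and no distinction between the two loci is ever needed.
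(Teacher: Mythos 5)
Your proof is correct. It is close in spirit to the paper's but takes a genuinely different route at the decisive step. The paper argues through the \emph{image}: with $U = \reg{Y}$, the action $\alpha$ is open and flat, so $V = \alpha(G \times U)$ is an open subscheme containing $U$; the product $G \times U$ is regular because $G$ is smooth and $U$ is regular, flat descent of regularity (\cite[IV2, Cor. 6.5.2]{EGA}) shows $V$ is regular, and the maximality of the regular locus forces $V = U$. You argue through the \emph{preimage}: after upgrading flatness to smoothness of $\alpha$ via the shearing automorphism $\Phi : (g,y) \mapsto (g,gy)$ and the factorization $\alpha = \pr_2 \circ \Phi$ (the same device the paper's previous lemma uses, and which implicitly underlies its claim that $\alpha$ is flat), you invoke the two-way compatibility $f^{-1}(\reg{Y}) = \reg{Z}$ for any smooth $f : Z \to Y$ --- descent of regularity along flat maps in one direction, ascent via regularity of smooth fibers in the other --- to get $\alpha^{-1}(\reg{Y}) = \reg{(G \times Y)} = \pr_2^{-1}(\reg{Y}) = G \times \reg{Y}$ directly. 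What each buys: your version needs neither the openness of $\alpha$ nor the maximality argument, and it delivers exactly the preimage identity $\alpha^{-1}(U) = G \times U$ that the paper's preceding lemma adopts as the meaning of $G$-stability, whereas the paper's version only concludes $\alpha(G \times U) = U$ and uses marginally weaker input on $\alpha$ (flatness rather than smoothness --- though for an action of a smooth group the two come for free together). You also make explicit two points the paper leaves implicit: the openness of $\reg{Y}$ (via excellence of finite-type schemes over a field) and the reason one must not pass to $\overline{k}$ and translate by points --- over an imperfect field that would conflate the regular locus with the strictly smaller smooth locus, which is precisely the trap both proofs are engineered to avoid.
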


\begin{proof}
We denote by $U = \reg{Y}$ the regular locus and by $\alpha : G \times Y \to Y$ the action. Since $\alpha$ is an open morphism, the image $V$ of $G \times U$ is a open subscheme of $Y$; it contains $U$ because the neutral element acts trivially. The restriction $\beta : G \times U \to V$ is flat since so is $\alpha$. The variety $G \times U$ is regular as $G$ is smooth and $U$ is regular (see \cite[IV2, Prop. 6.8.5 p152]{EGA}), ans so $V$ is regular too (see \cite[IV2, Cor. 6.5.2 p143]{EGA}). But $U$ is the largest regular open subscheme of $Y$, so $V = U$.
\end{proof}

\begin{proof}[Proof of Proposition \ref{prop_uniqueness_completion}] Assume that we have two such normal completions $\overline{X}$ and $\overline{X}'$. It suffices to show that the rational map $f : \overline{X} \dashrightarrow \overline{X}'$ induced by $\id : X \to X$ is defined everywhere (because so is the analogous map $f' : \overline{X}' \dashrightarrow \overline{X}$, hence $f' \circ f$ and $f \circ f'$ are morphisms defined everywhere which agree with $\id$ on $X$, so $f$ is an isomorphism). By assumption, $f$ is equivariant so its domain of definition $U$ is a $G$-stable open subscheme of $\overline{X}$ containing $X$. The variety $\overline{X}$ is normal and $\overline{X}'$ is proper, so $\codim_{\overline{X}} (\overline{X} \setminus U) \geq 2$. Then for every divisor $D$ in the boundary $\overline{X} \setminus X$ we have $U \cap D \neq \emptyset$, so $D \subseteq U$ because $D$ is homogeneous. Therefore $U = \overline{X}$.

Assume that $X$ is regular. Since the regular locus $\reg{\overline{X}}$ is a $G$-stable open subscheme of $\overline{X}$ containing $X$ and $\codim_{\overline{X}} (\overline{X} \setminus \reg{\overline{X}}) \geq 2$, the same argument shows that $\overline{X}$ is regular.
\end{proof}

\begin{proposition}\label{prop_equiv_divhom_Alb}
Assume that $k$ is algebraically closed. Let $X$ be a non-proper variety and let $G$ be a smooth connected algebraic group acting faithfully on $X$ such that $X$ is homogeneous. Then the Albanese codimension of $X$ is $1$ if and only if there exists a normal equivariant completion $\overline{X}$ such that the boundary $\overline{X} \setminus X$ is the union of finitely many $G$-homogeneous divisors which are abelian varieties.
\end{proposition}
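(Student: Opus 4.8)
The plan is to prove the two implications separately, working throughout over the algebraically closed field $k$. Since $X$ is homogeneous with a faithful $G$-action, I may fix a $k$-rational point $x$ and write $X \simeq G/G_x$, so that by Proposition \ref{prop_Alb_hom} we have $\Alb{X} = G/(\aff{G} \cdot G_x)$ and the Albanese morphism is the quotient $X \to \Alb{X}$. The Albanese codimension is then $\dim(\aff{G} \cdot G_x) - \dim G_x = \dim \aff{G} - \dim(\aff{G} \cap G_x)$, which measures the dimension of the fibers of $\alb{X}$.

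For the forward implication, I assume the Albanese codimension is $1$. Then $\alb{X} : X \to \Alb{X}$ is a $G$-equivariant morphism to the homogeneous space $\Alb{X} = G/H$ with $H = \aff{G} \cdot G_x$, whose fiber $C = \alb{X}^{-1}(0)$ has dimension $1$. By Proposition \ref{prop_assoc_fiber_bundle}, $X$ is the associated fiber bundle $G \stackrel{H}{\times} C$, where $C$ is a homogeneous curve under the affine group $H/G_x$. Since $C$ is a smooth (homogeneous hence regular) curve, it admits an equivariant completion $\overline{C}$ by adding finitely many boundary points, each of which is fixed by $H$ (the complement of a dense orbit in a homogeneous curve is a finite set of closed points, and the isotropy of each is all of $H$ up to the group structure). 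Forming $\overline{X} = G \stackrel{H}{\times} \overline{C}$ then gives a $G$-equivariant completion whose boundary is $G \stackrel{H}{\times} (\overline{C} \setminus C)$, a disjoint union of divisors each isomorphic to $G/H = \Alb{X}$, hence to an abelian variety. I would check normality of $\overline{X}$ by reducing to the smooth-$H$ case exactly as in the preceding proposition (replacing $G$ by $G/G_n$ to make the isotropy smooth, so that $\overline{C}$ is normal and the bundle projection $G \times \overline{C} \to \overline{X}$ is smooth, whence $\overline{X}$ inherits normality).

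For the converse, I assume such a completion $\overline{X}$ exists and must show the Albanese codimension is $1$. Since $X$ is not proper, the boundary is nonempty, so there is at least one homogeneous boundary divisor $D$ which is an abelian variety $A$. The inclusion $D \hookrightarrow \overline{X}$ together with the $G$-action gives a $G$-equivariant morphism $D = A \to \Alb{\overline{X}}$; by homogeneity $D$ is a single orbit $G/G_D$, and I would use this to compare $\Alb{X}$ with $A$. The cleanest route is to observe that by Proposition \ref{prop_Alb_qhom}(3), the open immersion $X \hookrightarrow \overline{X}$ induces an isomorphism on Albanese varieties, so $\dim \Alb{X} = \dim \Alb{\overline{X}}$; then the equivariant morphism $\alb{\overline{X}}$ restricted to the divisor $D$ gives an isogeny (or surjection with finite fibers) $A \to \Alb{\overline{X}}$, forcing $\dim \Alb{X} = \dim A = \dim D = \dim \overline{X} - 1 = \dim X - 1$, i.e.\ Albanese codimension $1$.

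The main obstacle I expect is the converse direction, specifically controlling the map from the boundary divisor to the Albanese variety and ensuring it has finite fibers rather than merely being surjective. A homogeneous abelian-variety divisor $D$ maps equivariantly to the homogeneous space $\Alb{\overline{X}}$, and I need that this map is a finite morphism of abelian varieties; this is where I would invoke the rigidity of morphisms between abelian varieties (every morphism of abelian varieties is, up to translation, a homomorphism) together with the finiteness results from \cite{BRI_anv} already used in the proof of Proposition \ref{prop_Alb_qhom}. A secondary technical point is verifying that the boundary points of the completed curve $\overline{C}$ are genuinely $H$-fixed and give abelian-variety divisors rather than possibly lower-dimensional or nonreduced pieces; handling the possible non-smoothness of $C$ (and hence of these divisors) is exactly what the Frobenius-kernel reduction to smooth $H$ is designed to circumvent.
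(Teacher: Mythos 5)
There are genuine gaps in both directions, and in each case the gap sits exactly at the point your sketch leaves to a cited tool that does not actually deliver the step. In the direction ``completion with abelian boundary divisors $\Rightarrow$ Albanese codimension $1$'' (your converse), the crux is finiteness of the fibers of $D \to \Alb{\overline{X}}$, and rigidity of morphisms of abelian varieties cannot supply it: rigidity only tells you the map is, up to translation, a homomorphism, whose kernel could a priori be a positive-dimensional abelian subvariety, so surjectivity alone does not force $\dim \Alb{X} = \dim D$. The missing idea --- and the only place the hypothesis that $D$ is an \emph{abelian variety} is actually used --- is that $\Autgp{D}^\circ = D$ contains no non-trivial connected affine subgroup, so the affine part $\aff{G}$ acts trivially on $D$; combined with $\dim(\aff{G} \cdot G_x) = \dim \aff{G}$ (the image of $G_x$ in the abelian variety $G/\aff{G}$ is finite because $G_x$ is affine), the stabilizer $\aff{G} \cdot G_x$ of $0 \in \Alb{X}$ acts on the set $D(k) \cap \overline{C}(k)$ transitively but through a finite group, forcing $D \cap \overline{C}$ to be finite. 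Since flatness of $G \times \overline{C} \to \overline{X}$ shows $D \cap \overline{C}$ has codimension $1$ in $\overline{C}$, this gives $\dim \overline{C} = 1$. This is the paper's argument; the finiteness result from \cite{BRI_anv} you invoke (finiteness of $\Alb{G} \to \Alb{X}$) reduces the fiber of $D \to \Alb{\overline{X}}$ to finitely many $\aff{G}$-orbits, but you would still need the triviality of the $\aff{G}$-action on $D$ to conclude, so that step cannot be skipped.

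In the direction ``codimension $1 \Rightarrow$ completion'', your plan to build $\overline{X} = G \stackrel{H}{\times} \overline{C}$ directly diverges from the paper, which never constructs the completion this way: it takes an \emph{arbitrary} normal equivariant completion (existence is already secured by the quasi-projectivity proposition plus Brion's theorem) and analyzes its boundary --- proving the fiber $\overline{C}$ of $\alb{\overline{X}}$ is irreducible via flatness of $\alpha : G \times \overline{C} \to \overline{X}$ (an open $G$-stable image would meet the open orbit), then showing each boundary point $y$ is fixed by $\aff{G}$, using the Rosenlicht decomposition $G = \ant{G} \cdot \aff{G}$ to put a group structure on the orbit $D = \ant{G} \cdot y$, and deducing that $D \to A$ is an isogeny from the chain $\aff{G} \subseteq G_y \subseteq \aff{G} \cdot G_x$. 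Your construction has a concrete defect in positive characteristic: $H = \aff{G} \cdot G_x$ need not be smooth (only $\aff{G}$ is, since $k$ is perfect), and the Frobenius-kernel reduction you propose replaces $X$ by $X/G_n$, so it produces a completion of $X/G_n$, not of $X$ --- the trick works in the quasi-projectivity proposition only because an ample linearized bundle can be \emph{pulled back} along the finite map $X \to X/G_n$, whereas a completion cannot be transported back without an extra argument (e.g. normalizing $\overline{X/G_n}$ in $k(X)$ and extending the action). Relatedly, set-theoretic fixedness of the boundary points of $\overline{C}$ under a non-smooth $H$ does not give scheme-theoretic fixedness, so your identification of the boundary divisors with $G/H \simeq \Alb{X}$ is not justified as stated. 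The paper's analyze-an-arbitrary-completion strategy sidesteps all of this, and moreover yields the stronger statement (every normal equivariant completion has homogeneous abelian boundary divisors) that is reused in the proof of Theorem \ref{theo_unique_compl}.
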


\begin{proof}
We first assume that such a $\overline{X}$ exists. In order to prove that the fibers of the Albanese morphism have dimension $1$, we will prove that the intersection of a fiber with a divisor in the boundary has codimension $1$ in the fiber, and that it consists of finitely many points.

We set $A = \Alb{X}$. If we choose $x \in X(k)$, by Propositions \ref{prop_Alb_hom} and \ref{prop_Alb_qhom} we have $\Alb{\overline{X}} = A = G / (\aff{G} \cdot G_x)$ and the Albanese morphism $\alb{X} : X \to A$ is the restriction of the $G$-equivariant morphism $\alb{\overline{X}} : \overline{X} \to A$. We consider the fiber $\overline{C} = \alb{\overline{X}}^{-1}(0)$ where $0$ is the neutral element of $A$. This is a closed subscheme of $\overline{X}$ which is stable under the action of $\aff{G} \cdot G_x$. The square
\begin{center}
\begin{tikzpicture}[baseline=(m.center)]
\matrix(m)[matrix of math nodes,
row sep=2.5em, column sep=2.5em,
text height=1.5ex, text depth=0.25ex, minimum width=3em, anchor=base, ampersand replacement=\&]
{G \times \overline{C} \&  G \\
\overline{X} \& A\\};
\path[->] (m-1-1) edge node[above] {$\pr_{1}$}(m-1-2);
\path[->] (m-1-1) edge (m-2-1);
\path[->] (m-2-1) edge node[below] {$\alb{\overline{X}}$}(m-2-2);
\path[->] (m-1-2) edge (m-2-2);
\end{tikzpicture}
\end{center}
is cartesian, where the left arrow is the restriction of the action $G \times \overline{X} \to \overline{X}$.

Let $D$ be a divisor in the boundary. Then $D \cap \overline{C}$ has codimension $1$ in $\overline{C}$: indeed the morphism $G \times \overline{C} \to \overline{X}$ is faithfully flat and $D$ has codimension $1$ in $\overline{X}$ so the scheme-theoretic preimage $G \times (D \cap \overline{C})$ has codimension $1$ in $G \times \overline{C}$ (see \cite[IV2, Cor. 6.1.4]{EGA}).

By assumption, the abstract group $(\aff{G} \cdot G_x)(k)$ acts transitively on the set $D(k) \cap \overline{C}(k)$. But $D$ is an abelian variety so $\Aut_D^\circ = D$ ($D$ is an algebraic group acting on itself by translation), so the affine group $\aff{G}$ acts trivially on $D$. Moreover $\dim \aff{G} \cdot G_x = \dim \aff{G}$: by \cite[Lemma 2.1]{BRI_nonaffine} the group $G_x$ is affine so $(G_x)_{\textrm{red}}^\circ$ is a smooth connected affine group, hence its image in the abelian variety $G / \aff{G}$ is trivial. Then the set $D(k) \cap \overline{C}(k)$ consists of finitely many points and therefore $\dim \overline{C} = 1$. 

\medskip
Conversely, assume that the Albanese codimension of $X$ is $1$. Let $\overline{X}$ be a normal equivariant completion. First, the fiber $\overline{C}$ is irreducible. Indeed, let $F$ be the closure of $C = \alb{X}^{-1}(0)$ in $\overline{C}$. If $V = \overline{C} \setminus F$ is not empty then it is an open subscheme of $\overline{C}$. As the restriction $\alpha : G \times \overline{C} \to \overline{X}$ of the action is flat, the image $\alpha(G \times V)$ is a $G$-stable open subscheme of $\overline{X}$. Then $\alpha(G \times V)$ contains the open orbit $X$, which is impossible. So, as expected, $\overline{C}$ is supported by $F$ and is irreducible.

Hence the complement of $C$ in $\overline{C}$ consists of finitely many $k$-rational points. The boundary $\overline{X} \setminus X$ is the union of the orbits of those points under $G$. Such a point $y$ is fixed by $\aff{G}$, because its orbit under $\aff{G}$ is smooth, connected and contained in $\overline{C} \setminus C$. Because of the Rosenlicht decomposition $G = \ant{G} \cdot \aff{G}$, the orbit $D$ of $y$ under $G$ is the orbit under $\ant{G}$. The group $\ant{G}$ is commutative so $D$ carries a structure of a smooth connected algebraic group with neutral element $y$. Then the restriction $D \to A$ of $\alb{\overline{X}}$ is a group morphism (see \cite[Prop. 4.1.4]{BRI_structure}). Moreover $A$ is $G$-homogeneous so the ($G$-equivariant) morphism $D \to A$ is surjective. Furthermore, the isotropy group $G_{y}$ contains $\aff{G}$ and is contained in $\aff{G} \cdot G_x$ but those two groups have the same dimension, so $\dim D = \dim G / G_{y} = \dim A$. Hence the morphism $D \to A$ is an isogeny and $D$ is an abelian variety. 
\end{proof}

We can now prove Theorem \ref{theo_unique_compl}, which we state again for the convenience of the reader.

\begin{theorem}
Let $X$ be a normal variety of Albanese codimension $1$ and let $G$ be a smooth connected algebraic group. If $X$ is almost homogeneous under a faithful action of $G$ then $X$ has a unique normal equivariant completion $\overline{X}$. Moreover $\overline{X}$ is projective and regular, and the boundary $\overline{X} \setminus X$ consists of zero, one or two (disjoint) $G$-homogeneous divisors.
\end{theorem}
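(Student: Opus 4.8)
The plan is to reduce the statement to the results already established, principally the uniqueness Proposition \ref{prop_uniqueness_completion} and the dictionary of Proposition \ref{prop_equiv_divhom_Alb}. First I would produce \emph{some} normal equivariant completion $\overline{X}$: by the preceding proposition $X$ is quasi-projective, hence by the existence result it admits a normal equivariant completion. The real work is to show that the boundary of a well-chosen completion consists of homogeneous divisors which are (geometrically) abelian varieties, so that uniqueness applies. I would work with the Albanese morphism $\alb{X} : X \to \Albu{X}$, which by the Remark after Proposition \ref{prop_Alb_qhom} is surjective, equivariant, and makes $\Albu{X}$ a homogeneous space; its fibers have dimension $1$ since the Albanese codimension is $1$. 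Using Proposition \ref{prop_assoc_fiber_bundle} I would realize $\overline{X}$ as an associated fiber bundle $G \stackrel{H}{\times} \overline{C}$ over $\Albu{X} = G/H$, where $\overline{C}$ is the (proper, one-dimensional) fiber of the extended Albanese morphism $\alb{\overline{X}} : \overline{X} \to \Albu{X}$ and $H = \aff{G}\cdot G_x$.

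Next I would analyze the boundary curve $\overline{C}$. The open orbit meets $\overline{C}$ in the one-dimensional fiber $C$ of $\alb{X}$, and the argument in the converse direction of Proposition \ref{prop_equiv_divhom_Alb} shows that over $\overline{k}$ the complement $\overline{C}_{\overline{k}} \setminus C_{\overline{k}}$ is a finite set of points, each fixed by $\aff{G}$ and each generating (under $\ant{G}$) a boundary divisor which is an abelian variety isogenous to $\Albz{X}$. Since $\overline{C}$ is a proper curve, there are zero, one or two such points at infinity; correspondingly the boundary $\overline{X}\setminus X$ consists of zero, one or two $G$-homogeneous divisors. To descend this from $\overline{k}$ to $k$ I would use that $\alb{\overline{X}}$, the $G$-action, and the boundary are all defined over $k$, so the boundary components and their count are Galois-stable; the number of geometric points at infinity of $\overline{C}$ being at most two is a statement insensitive to base change. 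The two divisors are disjoint because they lie in distinct fibers of $\overline{C} \to \Spec k$ over distinct boundary points, and the orbits under $G$ remain disjoint since $\alb{\overline{X}}$ separates them via the homogeneity of $\Albu{X}$.

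With the boundary understood, uniqueness and regularity follow formally: the boundary divisors are $G$-homogeneous, so Proposition \ref{prop_uniqueness_completion} gives uniqueness of the normal completion up to isomorphism. For regularity I would invoke the same proposition's last clause, which requires $X$ regular; but a normal almost homogeneous variety is regular on a dense open (the smooth locus of the open orbit is dense and $G$-stable), and since the singular locus is $G$-stable of codimension $\geq 2$ while the open orbit is smooth, $X$ itself is regular—so $\overline{X}$ is regular. Projectivity then follows because $\overline{X}$ is proper, normal, and carries a relatively ample $G$-linearized line bundle (Proposition \ref{prop_assoc_fiber_bundle} furnishes an $f$-ample bundle from an ample bundle on the proper curve $\overline{C}$) pulled back against an ample bundle on the projective Albanese torsor, yielding an ample line bundle on $\overline{X}$.

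The main obstacle I anticipate is the passage between $k$ and $\overline{k}$: Proposition \ref{prop_equiv_divhom_Alb} is stated only over an algebraically closed field, so the heart of the matter is checking that the geometric description (homogeneous boundary divisors that are abelian varieties, in number at most two) descends to an arbitrary $k$. This requires care because $\Albz{X}$ need not equal $\Albz{X_{\overline{k}}}$ over an imperfect field—only a purely inseparable isogeny relates them by Proposition \ref{prop_Albcodim_field_ext}—and the fiber $\overline{C}$ may be non-smooth. I would handle this by first establishing existence of a normal completion over $k$, base-changing to $\overline{k}$ to count and identify the boundary via Proposition \ref{prop_equiv_divhom_Alb}, and then arguing that the homogeneity hypothesis of Proposition \ref{prop_uniqueness_completion}—which is stated over $k$ and only asks that the boundary be a union of $G$-homogeneous divisors in the sense of the geometric transitivity defined in the conventions—is exactly what the geometric analysis provides.
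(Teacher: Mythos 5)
Your route coincides with the paper's in outline (quasi-projectivity gives a normal projective equivariant completion; the fiber analysis from Proposition \ref{prop_equiv_divhom_Alb} identifies the boundary as orbits of the finitely many points of $\overline{C}\setminus C$; Proposition \ref{prop_uniqueness_completion} then gives uniqueness and regularity), but your descent step has a genuine gap. Over an imperfect field, normality is \emph{not} preserved by the base change $k \to \overline{k}$: neither $X_{\overline{k}}$ nor $\overline{X}_{\overline{k}}$ need be normal, so Proposition \ref{prop_equiv_divhom_Alb} --- whose hypothesis is a \emph{normal} equivariant completion over an algebraically closed field --- cannot be applied to $\overline{X}_{\overline{k}}$ as you propose, and Galois-stability of the boundary sees only the separable part of the extension, so it cannot repair this. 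The paper inserts exactly the missing device: it passes to the normalization $\widetilde{\overline{X}_{\overline{k}}}$ of $\overline{X}_{\overline{k}}$, lifts the $G_{\overline{k}}$-action to it, notes that it still contains the open orbit $(X_0)_{\overline{k}}$ (which is smooth, hence untouched by normalization) as a dense open, identifies $\widetilde{\overline{X}_{\overline{k}}}$ via the algebraically closed case as \emph{the} normal equivariant completion of $(X_0)_{\overline{k}}$ --- here Proposition \ref{prop_Albcodim_field_ext} is needed to know the Albanese codimension is still $1$ after base change --- and only then pushes the boundary description down along the finite equivariant morphism $\widetilde{\overline{X}_{\overline{k}}} \to \overline{X}_{\overline{k}}$ and descends to $k$. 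As written, your argument establishes the theorem only over perfect fields.

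Two smaller defects. First, ``since $\overline{C}$ is a proper curve, there are zero, one or two such points at infinity'' is a non sequitur: properness together with the irreducibility of $\overline{C}$ (proved in Proposition \ref{prop_equiv_divhom_Alb}) gives only \emph{finiteness} of $\overline{C}\setminus C$. The bound by two requires the classification of homogeneous curves: $C$ is homogeneous under the smooth connected affine group $\aff{G}$, hence (over $\overline{k}$) is $\A^1$, $\A^1\setminus\{0\}$ or proper, and its regular completion adds at most two points; the paper cites \cite[Th.~1.1]{LAU} for precisely this, and without it the ``zero, one or two'' part of the statement is unproved. Second, your regularity argument (``the singular locus is $G$-stable of codimension $\geq 2$ while the open orbit is smooth, so $X$ is regular'') is incomplete as stated, since a normal almost homogeneous variety can perfectly well be singular; what makes it work is that the singular locus is contained in the union of the $G$-homogeneous boundary divisors of the open orbit $X_0$, and a nonempty $G$-stable closed subset of such a divisor is the whole divisor, of codimension $1$ --- that is, one must apply the last clause of Proposition \ref{prop_uniqueness_completion} to the smooth orbit $X_0$ rather than to $X$ itself (note that taking $X$ in that clause would be circular, since $X$ is only assumed normal). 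These two points are fixable with the references at hand; the imperfect-field descent in the previous paragraph is the essential omission.
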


\begin{proof}
We may assume that the open orbit $X_0$ is not projective. Let $\overline{X}$ be a normal projective equivariant completion of $X$. We first assume that $k$ is algebraically closed. We saw in the proof of Proposition \ref{prop_equiv_divhom_Alb} that the boundary $\overline{X} \setminus X_0$ is the union of $G$-homogeneous divisors, which are the $G$-orbits of the points in $\overline{C} \setminus C_0$ (with the obvious notations). In particular $\overline{X}$ is unique and regular. Moreover the curve $C_0$ is homogeneous under the action of $\aff{G}$, so $\overline{C} \setminus C_0$ consists of at most two points (see \cite[Th. 1.1]{LAU}).

We now come back to an arbitrary field $k$. We set $Y = \overline{X}$ and denote by $\widetilde{Y_{\overline{k}}}$ the normalization of $Y_{\overline{k}}$. There exists a unique action of $G_{\overline{k}}$ on $\widetilde{Y_{\overline{k}}}$ such that the morphism  $\widetilde{Y_{\overline{k}}} \to Y_{\overline{k}}$ is $G_{\overline{k}}$-equivariant. Then $\widetilde{Y_{\overline{k}}}$ is the unique normal equivariant completion of $X_{\overline{k}}$, so $\widetilde{Y_{\overline{k}}} \setminus (X_0)_{\overline{k}}$ consists of one or two $G_{\overline{k}}$-homogeneous divisors. Therefore $Y_{\overline{k}} \setminus (X_0)_{\overline{k}}$ also consists of one or two $G_{\overline{k}}$-homogeneous divisors, and $Y \setminus X_0$ consists of one or two $G$-homogeneous divisors.
\end{proof}

\section{Classification results}\label{sect_class}

In this section, we classify the almost homogeneous normal varieties of Albanese codimension $1$. We first prove the Theorem \ref{theo_description} of the introduction.

\begin{theorem}\label{theo_orbit_torsor}
Let $X$ be a normal variety of Albanese codimension $1$ and let $G$ be a smooth connected algebraic group acting faithfully on $X$. Then $X$ is almost homogeneous if and only if there exist a smooth connected subgroup $G' \leq G$ of Albanese codimension $1$ and a $G'$-stable open subscheme $X' \subseteq X$ which is a $G'$-torsor.
\end{theorem}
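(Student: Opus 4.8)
The plan is to prove both implications, the reverse one being quick. If such $G'$ and $X'$ exist, then $X'$ is a single $G'$-orbit, hence is contained in a single orbit of the larger group $G$; since $X'$ is open and dense in the integral scheme $X$, this $G$-orbit is dense, so $X$ is almost homogeneous.

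For the forward implication, I would first replace $X$ by its open orbit $X_0$: by Proposition \ref{prop_Alb_qhom} this leaves the Albanese variety unchanged, so $X_0$ is again normal, homogeneous, and of Albanese codimension $1$, and a $G'$-torsor open in $X_0$ is automatically open in $X$. After a separable field extension I may assume $X_0$ has a $k$-rational point $x$, so that $X_0 \simeq G/G_x$ and, writing $A = \Alb{X}$, the Albanese morphism is the $G$-equivariant quotient $X_0 \to A = G/(\aff{G} \cdot G_x)$ with one-dimensional fibres (Propositions \ref{prop_Alb_hom} and \ref{prop_Alb_qhom}). Passing to the completion $\overline{X}$ of Theorem \ref{theo_unique_compl} and using the associated fibre bundle structure of Proposition \ref{prop_assoc_fiber_bundle}, the fibre $\overline{C}$ of $\alb{\overline{X}}$ over the base point is a complete curve which geometrically is $\PP^1$ with at most two boundary points, and its open $\aff{G}$-orbit $C_0 = \overline{C} \cap X_0$ is geometrically isomorphic to $\G_m$, $\A^1$ or $\PP^1$ according to whether the boundary consists of two, one or zero divisors.

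The core of the argument is then to produce, inside $\aff{G}$, a one-dimensional smooth connected subgroup $L$ acting simply transitively on a suitable open subcurve $C_0' \subseteq \overline{C}$: a subtorus when $C_0 \simeq \G_m$, or when $\overline{C} \simeq \PP^1$ after removing the two fixed points of $L$; and a form of $\G_a$ acting by translations when $C_0 \simeq \A^1$. Setting $G' := \ant{G} \cdot L$ and letting $X'$ be the open subscheme swept out by $C_0'$, I would check that $G'$ acts simply transitively on $X'$, using that $\ant{G}$ surjects onto $A$ (Rosenlicht's decomposition $G = \ant{G} \cdot \aff{G}$) and that $L$ is transitive on the fibres, so that $X'$ is a $G'$-torsor. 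Finally, $G'$ has Albanese codimension $1$: since $\ant{G}$ is central and anti-affine, its affine part acts trivially on the base $A$ and hence only along the one-dimensional fibres, so faithfulness of the action forces the affine part of $\ant{G}$ to be at most one-dimensional; choosing $L$ compatibly then yields a group with one-dimensional affine part.

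I expect the main obstacle to be the construction of $L$ together with the verification that $G'$ acts \emph{freely}, that is, that $X'$ is genuinely a torsor and not merely an orbit with finite or non-reduced isotropy. Over an imperfect field the fibre $C_0$ and the group $L$ may be nontrivial forms of $\G_a$ (this is precisely where the pseudo-abelian phenomena and the possible non-smoothness of stabilizers enter), so the transitivity and freeness on $C_0$ must be analysed scheme-theoretically rather than only on $\overline{k}$-points. A secondary difficulty is descent: one must ensure that $L$, and hence $G'$ and $X'$, can be taken over the original field $k$, which I would handle by making the constructions canonical enough (the formation of $\ant{G}$ commutes with arbitrary field extensions) and combining Galois descent with the uniqueness of the completion in Theorem \ref{theo_unique_compl}.
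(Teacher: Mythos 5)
Your reverse implication and the reduction to the open orbit are fine, but the core of your forward argument has a genuine gap: the subgroup $L$ you propose to construct need not exist. Over an imperfect field $\aff{G}$ need not be smooth, and in the pseudo-abelian case --- which genuinely occurs, see case 3 of Theorem \ref{theo_class_section} --- the group $G$ is commutative, so every subgroup is normal, and by the very definition of a pseudo-abelian variety $G$ then contains no non-trivial smooth connected affine subgroup at all. Hence there is no one-dimensional smooth connected $L \leq \aff{G}$ and no candidate $G' = \ant{G} \cdot L$. In that situation the only possible witness is $G' = G$ itself (a torsor forces $\dim G' = \dim X' = \dim X$, which equals $\dim G$ when $X$ is, say, the completion of a pseudo-abelian $G$), a fallback your plan never provides. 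You do flag ``pseudo-abelian phenomena'' as a place where transitivity and freeness must be analysed scheme-theoretically, but the problem is more basic: the object you intend to construct does not exist there.

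The paper's proof takes a different and lighter route, which also dissolves your worry about freeness. It first observes that it suffices to produce a \emph{commutative} smooth connected subgroup $G' \leq G$ under which $X$ is almost homogeneous: for such a $G'$, the stabilizer of a point of the open orbit is normal (by commutativity), hence fixes the dense orbit pointwise, hence is trivial by faithfulness, so the open orbit is automatically a $G'$-torsor; the Albanese codimension of $G'$ is then $1$ by Proposition \ref{prop_Alb_qhom}. If $G$ is commutative one takes $G' = G$ (this covers the pseudo-abelian case); otherwise $D(G)$ is non-trivial, and working over $\overline{k}$ --- legitimate because $\ant{G}$, $D(G)$ and the derived series are defined over $k$ and commute with base change --- one shows that the curve $C = \alb{X_0}^{-1}(0)$ is homogeneous under $D(G)$, and then takes $G' = \ant{G} \cdot T$ for a non-trivial maximal torus $T$ of $D(G)$, or $G' = \ant{G} \cdot D^i(G)$ with $D^i(G)$ the last non-vanishing derived subgroup when $D(G)$ is unipotent. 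No completion $\overline{X}$, no case analysis on $C_0$, and no descent issues arise, since all subgroups used are canonically defined over $k$. To salvage your approach you would need (i) the commutative fallback $G' = G$, and (ii) to replace ``choose $L \leq \aff{G}$'' by a construction inside the smooth subgroups $D(G)$, $D^i(G)$ or a maximal torus --- which is precisely the paper's move.
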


\begin{proof}
It suffices to prove that there exists a commutative smooth connected subgroup $G' \leq G$ under which $X$ is almost homogeneous. Indeed, in this case the open $G'$-orbit must be a $G'$-torsor (as the action is faithful) and hence the Albanese codimension of $G'$ is $1$. We can assume that $G$ is not commutative. Equivalently, the derived subgroup $D(G)$ (which is smooth and connected) is not trivial.

Let us first prove that $X$ is almost homogeneous under $\ant{G} \cdot D(G)$. It suffices to prove that $X_{\overline{k}}$ is almost homogeneous under $(\ant{G} \cdot D(G))_{\overline{k}}$ and, since the formation of $\ant{G}$ and $D(G)$ commutes with field extensions, we can assume that $k$ is algebraically closed. Let $X_0$ be the open $G$-orbit in $X$, let $x \in X_0(k)$ and let $A = \Alb{X} = \Alb{X_0}$. The fiber $C = \alb{X_0}^{-1}(0)$ is isomorphic to $(\aff{G} \cdot G_x) / G_x \simeq \aff{G} / (\aff{G} \cap G_x)$ so it is a smooth curve. We have the Rosenlicht decomposition $G = \ant{G} \cdot \aff{G}$. Since $\Autgp{A}^\circ = A$ has no non-trivial smooth connected affine subgroup, the action of $\aff{G}$ on $A$ is trivial. Therefore $A$ is homogeneous under the action of $\ant{G}$ and $C$ is stable under the action of $\aff{G}$ (and in particular under the action of $D(G)$). If $D(G)$ fixes a point $y \in C(k)$ then it is contained in the isotropy group $G_y$, which is impossible since $D(G)$ is normal in $G$ and $G$ acts faithfully on $X_0 \simeq G / G_y$. So $D(G)$ has no fixed point in $C$, hence $C$ is homogeneous under the action of $D(G)$. Therefore $X_0$ is homogeneous under the action of $\ant{G} \cdot D(G)$

If $T$ is a non-trivial torus in $D(G)$ then we claim that $X$ is almost homogeneous under the commutative smooth connected subgroup $\ant{G} \cdot T$. Indeed, we can again assume that $k$ is algebraically closed. The action of $T$ on $C$ cannot be trivial, otherwise $T$ would act trivially on $X_0$, in contradiction with the faithfulness of the action of $G$ on $X_0$. As $C$ is a reduced curve, this implies that $T$ has an open orbit $U$ in $C$. By Proposition \ref{prop_assoc_fiber_bundle} the restriction $\ant{G} \times C \to X_0$ of the action is an open morphism, so that the image of $\ant{G} \times U$ is an open $(\ant{G} \cdot T)$-orbit.

If $D(G)$ contains no non-trivial torus then it is a unipotent group. There exists an integer $i \geq 1$ such that the iterated derived subgroup $D^i(G)$ is non-trivial while $D^{i+1}(G) = 0$. Then $X$ is almost homogeneous under the commutative smooth connected subgroup $\ant{G} \cdot D^i(G)$.
\end{proof}

Now we know what are the possible varieties, but we still have to determine all the acting groups $G$. For this, is suffices to focus on homogeneous varieties (by considering the open orbit).

\begin{lemma}\label{lemma_torsor_or_proj}
Let $X$ be a variety of Albanese codimension $1$ and let $G$ be a smooth connected algebraic group acting faithfully on $X$. If $X$ is homogeneous and $G$ is minimal for that property (that is, $G$ contains no smooth connected strict subgroup under which $X$ is homogeneous) then either $G$ is commutative and $X$ is a $G$-torsor, or $X$ is projective.
\end{lemma}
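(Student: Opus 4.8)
The plan is to study the Albanese fibration $\alb{X} \colon X \to A := \Alb{X}$, whose fibres are curves, and to split the argument according to whether $G$ is commutative. If $G$ is commutative there is nothing fibrewise to do: since $X$ is homogeneous we have $X \cong G/G_x$, all point stabilisers are conjugate hence equal (as $G$ is commutative), so $G_x$ is the scheme-theoretic kernel of the action and is therefore trivial by faithfulness. Thus $X \cong G$ is a $G$-torsor, which is the first alternative. The whole content of the lemma is therefore the case where $G$ is \emph{not} commutative, in which I must show that $X$ is projective.

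So assume $G$ is not commutative. After a finite separable extension (which affects neither the hypotheses nor the conclusion, by Proposition \ref{prop_Albcodim_field_ext} and Galois descent) I may assume $X$ has a rational point $x$; then $A = G/(\aff{G}\cdot G_x)$ by Proposition \ref{prop_Alb_hom}, and the fibre $C = \alb{X}^{-1}(0) \cong \aff{G}/(\aff{G}\cap G_x)$ is a smooth curve homogeneous under the affine group $\aff{G}$ (in particular $\aff{G}\ne 1$). By the classification of homogeneous curves \cite{LAU}, over $\overline{k}$ the curve $C$ is either $\PP^1$ or an affine form of $\G_m$ or $\G_a$, and this alternative is geometric. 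In the proper case $C_{\overline{k}}\cong\PP^1$, Proposition \ref{prop_assoc_fiber_bundle} presents $X = G \stackrel{\aff{G}\cdot G_x}{\times} C$, so $\alb{X}$ is proper; as $A$ is proper, $X$ is proper, and being quasi-projective (by the relative ampleness established above for normal almost homogeneous varieties of Albanese codimension $1$) it is projective. This is the desired conclusion, so it remains to rule out the affine case.

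To exclude the affine case I would produce a commutative smooth connected subgroup of $G$ acting transitively on $X$, contradicting that $G$ is non-commutative via minimality. Using the Rosenlicht decomposition $G = \ant{G}\cdot\aff{G}$ with $\ant{G}$ central (Proposition \ref{prop_Rosenlicht_decomp}), and that $\aff{G}$ acts trivially on the abelian variety $A$ so that $\ant{G}$ already acts transitively on $A$, one has $X = \ant{G}\cdot C$. Hence \emph{any} commutative smooth connected subgroup $S \le \aff{G}$ acting transitively on $C$ yields a commutative $k$-subgroup $\ant{G}\cdot S$ acting transitively on $X$; minimality of $G$ then forces $G = \ant{G}\cdot S$, which is commutative, a contradiction. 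When $C_{\overline{k}}\cong\G_m$ this is clean: the image of $\aff{G}$ in $\Autgp{C}^\circ = \G_m$ is all of $\G_m$, a maximal torus of $\aff{G}$ (which exists over $k$) maps onto it, and a one-dimensional subtorus $S$ acts transitively on $\G_m$.

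The remaining case $C_{\overline{k}}\cong\G_a$, in positive characteristic, is where I expect the main obstacle. Here transitivity forces the image of $S$ in $\Autgp{C}^\circ = \G_a \rtimes \G_m$ to contain the translation subgroup $\G_a$, so one must find a \emph{commutative} subgroup of $\aff{G}$ surjecting onto these translations; equivalently, one must lift the quotient map $\aff{G}\twoheadrightarrow\G_a$ (onto translations, which factors through the unipotent radical) through a commutative subgroup. The subtlety is that a \emph{one-parameter} lift need not exist in characteristic $p$: the Witt group $W_2$ surjects onto $\G_a$ but contains no copy of $\G_a$ mapping onto it, and on $\PP^1$ such a commutative group would only be almost transitive (which is exactly why the proper case is not excluded). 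The resolution I would pursue is to allow $S$ of higher dimension—$W_2$ is itself commutative—and to extract a commutative transitive subgroup from the unipotent radical of $\aff{G}$ directly, invoking the structure of $\Autgp{C}^\circ$ for affine $C$ from \cite{LAU}; producing this commutative subgroup (and checking the rationality and descent bookkeeping over an arbitrary $k$) is the heart of the matter.
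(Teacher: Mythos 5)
Your commutative case, your proper-fibre case, and your $\G_m$-fibre case are all fine (the maximal-torus argument producing the commutative transitive subgroup $\ant{G}\cdot T$ is correct, and the reduction steps are sound). But the proof has a genuine gap exactly where you flag it: the case $C_{\overline{k}} \simeq \G_a$ is not resolved, only a plan is sketched, and the plan is aimed at an intermediate claim that is stronger than what is available. You want a commutative smooth connected subgroup of $G$ acting \emph{transitively} on $X$; the paper's own proof of Theorem \ref{theo_orbit_torsor} indicates this is too much to ask in the unipotent situation --- there, when $D(G)$ contains no nontrivial torus, one only obtains a commutative subgroup $\ant{G}\cdot D^i(G)$ (with $D^i(G)$ the last nonvanishing iterated derived group) acting with a \emph{dense orbit}, not transitively. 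This is precisely the Witt-vector obstruction you identify: a commutative subgroup surjecting onto the translation group $\G_a \subseteq \Autgp{C}^\circ$ need not exist inside $G$, so "extract a commutative transitive subgroup from the unipotent radical" is not a step you can expect to carry out.

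The paper sidesteps the construction entirely with a different use of minimality. Since $G$ is not commutative, $D(G) \neq 1$ and (as in the proof of Theorem \ref{theo_orbit_torsor}) $X$ is homogeneous under $\ant{G}\cdot D(G)$; minimality then forces $G = \ant{G}\cdot D(G)$, and since $\ant{G}$ is central this gives $D(G) = D(D(G))$, i.e.\ $D(G)$ is \emph{perfect}. Next, $D(G)$ acts faithfully on the fibre curve $C$: the kernel of this action is normal in $D(G)$, hence normal in $G = \ant{G}\cdot D(G)$, hence (being contained in point stabilizers) it acts trivially on all of $X$ and is trivial by faithfulness. Now \cite[Th.~1.1]{LAU} rules out all the affine possibilities for $C$ at one stroke, because the connected automorphism groups of the affine homogeneous curves (forms of $\G_a$, $\G_m$) are solvable and admit no faithful action of a perfect group; hence $C \simeq \PP^1$, and then properness of $\alb{X}$ plus quasi-projectivity of $X \simeq G/G_x$ gives projectivity, as in your proper case. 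In short: replace your case-by-case search for a commutative transitive subgroup by a single application of minimality to $\ant{G}\cdot D(G)$, and use perfectness of $D(G)$ against the solvability of the automorphism groups of affine curves; this closes the $\G_a$ case where your argument stops.
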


\begin{proof}
We can assume that $G$ is not commutative. We saw in the proof of Theorem \ref{theo_orbit_torsor} that $X$ is then homogeneous under the action of $\ant{G} \cdot D(G)$. By minimality $G = \ant{G} \cdot D(G)$ and $D(D(G)) = D(G)$. In order to prove that $X$ is projective, we can assume that $k$ is algebraically closed. With the above notations, $C$ is a smooth curve homogeneous under $D(G)$. Moreover the action of $D(G)$ on $C$ is faithful because its kernel is normal in $D(G)$, but $G = \ant{G} \cdot D(G)$ and $\ant{G}$ is central in $G$, so this kernel is normal in $G$. Therefore $C \simeq \PP^1$ (see \cite[Th. 1.1]{LAU}). We have the cartesian square
\begin{center}
\begin{tikzpicture}[baseline=(m.center)]
\matrix(m)[matrix of math nodes,
row sep=2.5em, column sep=2.5em,
text height=1.5ex, text depth=0.25ex, minimum width=3em, anchor=base, ampersand replacement=\&]
{G \times C \&  G \\
X \& A\\};
\path[->] (m-1-1) edge node[above] {$\pr_{1}$}(m-1-2);
\path[->] (m-1-1) edge (m-2-1);
\path[->] (m-2-1) edge node[below] {$\alb{X}$}(m-2-2);
\path[->] (m-1-2) edge (m-2-2);
\end{tikzpicture}
\end{center}
Then $\alb{X}$ is proper because so is $\pr_1 : G \times C \to G$. Hence $X$ is a proper variety. Since it is quasi-projective (as we have $X \simeq G  / G_x$) the variety $X$ is even projective.
\end{proof}

A theorem of Armand Borel and Reinhold Remmert in the setting of K\"ahler manifolds, and of Carlos Sancho de Salas in the setting of algebraic varieties, states that over an algebraically closed field a projective variety is homogeneous under its automorphism group if and only if it is isomorphic to the product of an abelian variety and a (rational) variety which is homogeneous under a semisimple group of adjoint type (see \cite[th. 5.2]{SAN}). In our situation, we have the following analogous result.

\begin{proposition}\label{prop_case_proj}
Let $X$ be a projective variety of Albanese codimension $1$ and let $G$ be a smooth connected algebraic group acting faithfully on $X$. Then $X$ is homogeneous under the action of $G$ if and only if there exist an abelian variety $A$, an $A$-torsor $A_1$ and a smooth projective conic $C$ such that $X \simeq A_1 \times C$ and $G = \Autgp{X}^\circ \simeq A \times \Autgp{C}^\circ$. We have $A_1 = \Albu{X}$ and $A = \Albz{X}$, and $\Autgp{C}$ is a form of $\PGL_2$.
\end{proposition}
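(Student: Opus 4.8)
The plan is to prove both directions of the equivalence, treating the forward (harder) direction by reducing to the algebraically closed case and then using the associated-fiber-bundle structure together with the theorem of Borel--Remmert--Sancho de Salas quoted just before the statement.

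For the easy converse, suppose $X \simeq A_1 \times C$ with $C$ a smooth projective conic and $G = A \times \Autgp{C}^\circ$. Then $A$ acts transitively on the $A$-torsor $A_1$ and $\Autgp{C}^\circ$ acts transitively on $C$ (a form of $\PP^1$ is homogeneous under its automorphism group), so the product action is transitive and $X$ is homogeneous. The Albanese identifications $A_1 = \Albu{X}$ and $A = \Albz{X}$ follow from the Example computing the Albanese of a projective bundle (a conic being locally $\PP^1$, every map to an abelian variety kills the fiber), and $\Autgp{C}$ is a form of $\PGL_2$ by the classification of forms of $\PP^1$.

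For the forward direction I would first reduce to $k = \overline{k}$: homogeneity, the Albanese codimension, the group $\aff{G}$ and the Rosenlicht decomposition all behave well under the extension $\overline{k}/k$ (Propositions \ref{prop_Rosenlicht_decomp} and \ref{prop_Albcodim_field_ext}), and the claimed product decomposition and the identification $G = \Autgp{X}^\circ$ can be checked after base change. Over $\overline{k}$, I set $A = \Alb{X}$ and look at the Albanese morphism $\alb{X} : X \to A$, whose fiber $C = \alb{X}^{-1}(0)$ is, as in the proof of Lemma \ref{lemma_torsor_or_proj}, a smooth projective curve homogeneous under $\aff{G}$; since $X$ is projective this forces $C \simeq \PP^1$, and the associated-fiber-bundle description $X = G \stackrel{H}{\times} C$ of Proposition \ref{prop_assoc_fiber_bundle} presents $X$ as a $\PP^1$-bundle over $A$. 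The key structural input is that $X$, being projective and homogeneous under the smooth connected group $G$, is homogeneous under $\Autgp{X}^\circ$, so the theorem of Sancho de Salas applies: $X \simeq A' \times Z$ where $A'$ is an abelian variety and $Z$ is homogeneous under a semisimple group of adjoint type. Matching dimensions and the Albanese (the abelian factor must be $A$ and $Z$ must be a one-dimensional rational homogeneous variety, hence $\PP^1$) yields $X \simeq A \times \PP^1$ with $\Autgp{X}^\circ \simeq A \times \PGL_2$, and minimality/faithfulness gives $G = \Autgp{X}^\circ$.

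The main obstacle is the descent from $\overline{k}$ to the arbitrary field $k$: over $\overline{k}$ the conic $C$ is $\PP^1$ and the abelian part is $\Albz{X}$, but over $k$ one only gets a form of $\PP^1$ (a smooth conic, possibly without a rational point) and an $\Albz{X}$-torsor rather than an abelian variety, which is exactly why the statement is phrased with $A_1 = \Albu{X}$ and a conic $C$. I would handle this by showing that the product decomposition $X_{\overline k} \simeq (\Albz{X})_{\overline k} \times \PP^1$ and the splitting $G_{\overline k} \simeq (\Albz X)_{\overline k} \times \PGL_2$ are canonical --- the first factor being the Albanese and the second being the image of the fiber --- so that they are Galois-equivariant and descend to an isomorphism $X \simeq \Albu{X} \times C$ over $k$, where $C$ is the twist of $\PP^1$ (a smooth conic) and $\Autgp{C}$ is the corresponding form of $\PGL_2$. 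The functoriality of the Albanese morphism and of $\Autgp{X}^\circ$ under base change is what makes these descent data canonical, and that is the point requiring care.
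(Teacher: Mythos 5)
Your converse direction and your outline over $\overline{k}$ are essentially sound, and replacing the paper's direct computation by the theorem of Sancho de Salas is a legitimate alternative route: the paper instead argues via the Rosenlicht decomposition, showing that $D(G)$ acts faithfully and transitively on the Albanese fiber $C \simeq \PP^1$, so that $D(G) = \aff{G} = \PGL_2$ and then $G = \ant{G} \times \PGL_2$ with $\ant{G} = \Alb{X}$. But note that even over $\overline{k}$ your final step has a hole: there is no minimality hypothesis in this proposition, so ``minimality/faithfulness gives $G = \Autgp{X}^\circ$'' is not an argument. Sancho de Salas's theorem concerns homogeneity under the full automorphism group and tells you nothing about which subgroups $G \leq A \times \PGL_2$ act transitively; you must prove separately that any smooth connected such $G$ is the whole group (for instance: $N = G \cap (\{0\} \times \PGL_2)$ is normal in $G$ with image normal in $\PGL_2$; if $N$ is trivial then $G$ embeds into $A$, hence is commutative, contradicting that transitivity on the $\PP^1$-direction forces $G$ to surject onto $\PGL_2$; so $G \supseteq \{0\} \times \PGL_2$ and $G = A \times \PGL_2$ by dimension count). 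The paper's route gets this identification for free, since $\aff{G} \cdot G_x = \aff{G} = \PGL_2$ falls out of the same fiber analysis.

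The genuine gap is the descent step, which you correctly identify as the crux but whose proposed mechanism does not work as stated. Over $k$ the torsor $\Albu{X}$ need in general have no $k$-rational point, so ``the fiber'' $\alb{X}^{-1}(0)$ and hence ``the image of the fiber'' are simply not defined over $k$; and the isomorphism $X_{\overline{k}} \simeq A_{\overline{k}} \times \PP^1$ is not canonical (it can be altered by any automorphism of either factor), so Galois-equivariance of the second projection is precisely what has to be constructed, not a consequence of functoriality of $\alb{X}$ or of $\Autgp{X}$. Moreover, in positive characteristic $\overline{k}/k$ is not Galois, so any descent should be run along a finite separable extension. The paper's device is to produce the second projection $k$-rationally from the start: after base change to a finite separable $K/k$ with $x \in X(K)$ one has $\omega_{X_K}^{-1} \simeq \pr_2^* \omega_{\PP^1_K}^{-1}$, whence $\omega_X^{-1}$ is globally generated over $k$ and defines a morphism $X \to \PP^2_k$ whose scheme-theoretic image is a smooth conic $C$ (anticanonically embedded); then $(\alb{X},p) : X \to \Albu{X} \times C$ is an isomorphism because it becomes one over $K$. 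If you want to stay closer to your plan, the one honestly canonical candidate for the second fibration is the fibration by $\ant{G}$-orbits, since $\ant{G}$ is defined over $k$ and its formation commutes with arbitrary field extensions; but then you must establish the existence of that quotient over $k$, and the anticanonical map is the cleaner way to sidestep all descent theory.
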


\begin{proof}
We use again the above notations. Let us first assume that $k$ is algebraically closed. We will prove $G = A \times D(G)$, $A = \ant{G}$ and $D(G) = \aff{G} = \PGL_2$. The action of $D(G)$ on $C$ is faithful and $C = \PP^1$ is homogeneous, so $D(G) = \PGL_2$. Similarly $\aff{G}$ acts faithfully on $C$ so $\aff{G}$ is a subgroup of $\Aut_C = \PGL_2$ containing $D(G)$, and then $D(G) = \aff{G}$. The subgroup $\ant{G} \cap D(G)$ is central in $D(G)$, but the (scheme-theoretic) center of $\PGL_2$ is trivial. Therefore $G = \ant{G} \times D(G)$. Let us take $x \in C(k)$. As above, the group scheme $\aff{G} \cdot G_x$ acts on $C$ and this action is faithful, so $\aff{G} \cdot G_x = \aff{G}$. Hence $A = G / (\aff{G} \cdot G_x) = \ant{G}$,  $X = A \times C$ and $G = A \times \PGL_2 = \Aut_X^\circ$.

We now come back to an arbitrary field $k$. We still have $G = \ant{G} \times D(G)$ because the formation of $\ant{G}$ and $D(G)$ commutes with field extensions. The subgroup $\ant{G}$ is an abelian variety and we denote it again by $A$. Let $K/k$ be a finite separable extension such that there exists $x \in X(K)$. Then the isotropy group $G_{K,x}$ is a subgroup of $D(G)_K$ so $X_K = A_K \times (D(G)_K / G_{K,x})$, but the quotient $(D(G)_K / G_{K,x})$ is a form of $\PP^1$ with a $K$-rational point $x$, so we have $X_K = A_K \times \PP^1_K$. The formation of the Albanese morphism $\alb{X} : X \to \Albu{X}$ commutes with separable extensions so we get the first projection $\alb{X,K} : X_K \to A_K$. We also have the second projection $\pr_2 : X_K \to \PP^1_K$. The anticanonical bundle satisfies $\omega_{X_K}^{-1} = \left( \alb{X,K}^*\omega_{A_K}^{-1}\right) \otimes \left(\pr_2^* \omega_{\PP^1_K}^{-1} \right) = \pr_2^* \omega_{\PP^1_K}^{-1}$ so it is generated by global sections and defines a morphism $X_K \to \PP^2_K$, which is the composite $X_K \xrightarrow{\pr_2} \PP^1_K \xrightarrow{i_K} \PP^2_K$ where $i_K$ is the closed immersion defined by the bundle $\omega_{\PP^1_K}^{-1} = \mathcal{O}_{\PP^1_K}(2)$. Thus $\omega_X^{-1}$ is generated by global sections and defines a morphism $X \to \PP^2_k$. Denoting by $C$ its scheme-theoretic image, this morphism factorizes as $X \xrightarrow{p} C \xrightarrow{i} \PP^2_k$, and $\pr_2$ and $i_K$ are deduced from $p$ and $i$ after scalar extension. The curve $C$ is a smooth projective conic embedded in $\PP^2_k$ by $\omega_C^{-1}$. Since $(\alb{X,K},\pr_2) : X_K \to A_K \times \PP^1_K$ is an isomorphism, so is $(\alb{X},p) : X \to \Albu{X} \times C$.
\end{proof}

\begin{theorem}\label{theo_class_section}
Let $X$ be a variety of Albanese codimension $1$ and let $G$ be a smooth connected algebraic group. Then $X$ is homogeneous under a faithful action of $G$ if and only if one of the following cases holds:
\begin{enumerate}
\item The group $G$ is a semi-abelian variety (that is, it is given by an extension $0 \to T \to G \to A \to 0$ where $T$ is a form of the multiplicative group $\G_m$ and $A$ is an abelian variety) and $X$ is a $G$-torsor.
\item There exist a form $U$ of the additive group $\G_a$ and an abelian variety $A$ such that $G$ is given by an extension $0 \to U \to G \to A \to 0$ and $X$ is a $G$-torsor.
\item\label{case_pseudo_abelian} The group $G$ is a pseudo-abelian variety such that we have an exact sequence $0 \to \G_{a,\overline{k}} \to G_{\overline{k}} \to B \to 0$ where $B$ is an abelian variety, and $X$ is a $G$-torsor.
\item There exist an abelian variety $A$ and an $A$-torsor $A_1$ such that $X = A_1 \times \A^1$ and $G = A \times (\G_a \rtimes \G_m)$.
\item There exist an abelian variety $A$, an $A$-torsor $A_1$ and a form $C$ of $\PP^1$ such that $X = A_1 \times C$ and $G = \Aut_X^\circ = A \times \Aut_C$.
\end{enumerate}
\end{theorem}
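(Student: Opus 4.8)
The plan is to prove the two implications separately, with the forward direction being the substantial one. For the "if" direction, I would verify that each of the five listed groups $G$ does act homogeneously and faithfully on the stated $X$ of Albanese codimension $1$: in cases (1)--(3) a $G$-torsor is obviously homogeneous, and its Albanese codimension equals that of $G$, which is $1$ by the $\propintro$ describing such groups; in cases (4) and (5) one checks directly that $A \times (\G_a \rtimes \G_m)$ acts transitively on $A_1 \times \A^1$ and $A \times \Aut_C$ acts transitively on $A_1 \times C$, faithfulness being clear, and the Albanese codimension being $1$ since $\Albu{X} = A_1$.

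For the "only if" direction, the strategy is to reduce to a minimal acting group using Lemma \ref{lemma_torsor_or_proj}. First I would choose a smooth connected subgroup $G_{\min} \leq G$ minimal for the property that $X$ is homogeneous under it; Lemma \ref{lemma_torsor_or_proj} then splits into two cases. If $G_{\min}$ is commutative and $X$ is a $G_{\min}$-torsor, then $G_{\min}$ has Albanese codimension $1$, so by the $\propintro$ it falls into one of its three types, giving cases (1)--(3) provided $G = G_{\min}$; the remaining work is to rule out any strictly larger $G$ acting faithfully, which follows because a $G_{\min}$-torsor is homogeneous under $\Autgp{X}^\circ$ only through translations by commutative groups, forcing $G = G_{\min}$. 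If instead $X$ is projective, I would invoke Proposition \ref{prop_case_proj}: this gives $X \simeq A_1 \times C$ with $C$ a conic and $G = \Autgp{X}^\circ = A \times \Autgp{C}^\circ$, which is exactly case (5).

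The main obstacle is case (4), which is not proper and not a torsor, hence does not arise from either branch of Lemma \ref{lemma_torsor_or_proj} applied to a minimal group. This case occurs precisely when $X$ is the complement of one boundary divisor in the projective completion, i.e.\ when $C = \A^1$ sits inside $C = \PP^1$ with $\Autgp{\A^1} = \G_a \rtimes \G_m$ acting. The delicate point is to show that when the fiber $C$ over $\Albu{X}$ is isomorphic to $\A^1$ and $G$ acts faithfully and homogeneously, the whole group must be $A \times (\G_a \rtimes \G_m)$: the factor $A = \ant{G}$ acts by translations on $A_1$, while $\aff{G}$ must act faithfully on $\A^1$ through its full automorphism group $\G_a \rtimes \G_m$, and the Rosenlicht decomposition $G = \ant{G} \cdot \aff{G}$ together with the triviality of $\ant{G} \cap \aff{G}$ (since $\G_a \rtimes \G_m$ has trivial center meeting $\ant{G}$) yields the direct product. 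I would establish this by the same global-sections and base-change argument used in Proposition \ref{prop_case_proj}, working on the completion $\overline{X}$ from Theorem \ref{theo_unique_compl}, whose boundary is a single $G$-homogeneous divisor, and then removing it.
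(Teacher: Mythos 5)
There is a genuine gap, and it sits at the heart of your torsor branch. You claim that when the minimal group $G_{\min}$ is commutative and $X$ is a $G_{\min}$-torsor, one can ``rule out any strictly larger $G$ acting faithfully'' because a torsor is homogeneous ``only through translations by commutative groups.'' This is false, and case (4) of the theorem is itself the counterexample: $X = A_1 \times \A^1$ is a torsor under the commutative group $A \times \G_a$, yet the strictly larger, non-commutative group $A \times (\G_a \rtimes \G_m)$ acts faithfully and transitively (already $\A^1$ is a $\G_a$-torsor homogeneous under $\G_a \rtimes \G_m$). Your subsequent framing of case (4) as ``not arising from either branch of Lemma \ref{lemma_torsor_or_proj}'' compounds the confusion: it arises exactly from the torsor branch, with $G \supsetneq G_{\min}$. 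What is actually needed — and what the paper does over $\overline{k}$ — is to take a commutative $H \leq G$ with $X$ an $H$-torsor and analyze the faithful action of $\aff{G}$ on the fiber $C = \alb{X}^{-1}(0)$, which is an $\aff{H}$-torsor: if $\aff{H} = \G_m$ then $C \simeq \A^1_*$, whose largest smooth connected group of automorphisms is $\G_m$, forcing $\aff{G} = \G_m$ and hence $G = H$ commutative; if $\aff{H} = \G_a$ then $C \simeq \A^1$ and $\aff{G}$ is $\G_a$ or $\G_a \rtimes \G_m$, the latter producing case (4). Your proposal contains no mechanism that distinguishes these two subcases, and with the false ``forcing $G = G_{\min}$'' claim in place it would wrongly exclude case (4) altogether.

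The second, smaller gap is descent from $\overline{k}$ to $k$, which your proposal glosses over but which occupies the paper's entire Step 2. Over an imperfect field $\aff{G}$ need not be smooth, so your appeal to the Rosenlicht decomposition with $\aff{G}$ acting on $\A^1$ cannot be run over $k$ directly; the paper instead works with the largest affine smooth connected normal subgroup $N$, and the $\G_a$-case splits into case (2) and the pseudo-abelian case (3) according to whether $N$ is trivial — a trichotomy absent from your argument. Likewise, case (4) over $k$ requires Lemma \ref{lemma_forms_GaGm} (that $\G_a \rtimes \G_m$ has no non-trivial forms), Totaro's structure theorem for pseudo-abelian varieties to show $G/N$ is an abelian variety, and — to split $X$ itself — the affinization morphism $p : X \to \Spec \mathcal{O}(X)$, which commutes with arbitrary field extensions and gives $(\alb{X}, p) : X \xrightarrow{\sim} \Albu{X} \times \A^1_k$. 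Your proposed substitute, rerunning the anticanonical global-sections argument of Proposition \ref{prop_case_proj} on the completion $\overline{X}$ and removing a boundary divisor, is plausible in spirit but is not carried out, and in any event it would not by itself yield the $k$-form statement $G = A \times (\G_a \rtimes \G_m)$ without an input like Lemma \ref{lemma_forms_GaGm}. The ``if'' direction and the projective branch via Proposition \ref{prop_case_proj} are fine as you state them.
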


\begin{proof}
The above varieties are indeed homogeneous. Conversely, let us assume that $X$ is homogeneous under a faithful action $\alpha : G \times X \to X$. If $X$ is projective then the classification is given by Proposition \ref{prop_case_proj}. We assume from now on that $X$ is not projective, so that $G$ contains a commutative smooth connected subgroup $H$ such that $X$ is a $H$-torsor.

\begin{mystep}
\item We first assume that $k$ is algebraically closed. We use the same notations as in the proof of Proposition \ref{prop_equiv_divhom_Alb}. By the same arguments as above, the smooth curve $C = \alb{X}^{-1}(0)$ is $\aff{G}$-stable, the actions of $\aff{G}$ and $\aff{H}$ are faithful, and $C$ is a $\aff{H}$-torsor. Since $\aff{H}$ is a commutative smooth connected algebraic group, we have $\aff{H} = \G_m$ or $\G_a$. Moreover we have an exact sequence $0 \to \aff{H} \to H \to A \to 0$. If $\aff{H} = \G_m$ then the largest smooth connected algebraic group acting faithfully on $C \simeq \A^1_*$ is $\G_m$ so $\aff{G} = \G_m$, hence $G = \ant{G} \cdot \aff{G}$ is commutative (because $\ant{G}$ is a central subgroup of $G$) and $G = H$. We now assume that $\aff{H} = \G_a$. Similarly, we have $\aff{G} = \G_a$ or $\G_a \rtimes \G_m$. If $\aff{G} = \G_a$ then $G = H$. If $\aff{G} = \G_a \rtimes \G_m$ then $\ant{G} \cap \aff{G}$ is a central subgroup of $\aff{G}$, so it is trivial. Hence $G = \ant{G} \times \aff{G}$, which implies $H = A \times \aff{H}$ and $G = A \times \aff{G}$.

\item We now come back to an arbitrary field $k$. If we have an exact sequence $0 \to \G_{m,\overline{k}} \to G_{\overline{k}} \to B \to 0$ where $B$ is an abelian variety then $G_{\overline{k}}$ is a semi-abelian variety, so by \cite[lemma 5.4.3]{BRI_structure}, $G$ is already a semi-abelian variety and we have an exact sequence $0 \to T \to G \to A \to 0$ where $T$ is a form of $\G_m$ and $A$ is an abelian variety such that $A_{\overline{k}} = B$.

\medskip
Assume that we have an exact sequence $0 \to \G_{a,\overline{k}} \to G_{\overline{k}} \to B \to 0$ where $B$ is an abelian variety. Let $N$ be the largest affine smooth connected normal subgroup of $G$. If $N$ is trivial then $G$ is a pseudo-abelian variety. If $N$ is not trivial then $N_{\overline{k}}$ is an affine smooth connected normal subgroup of $G_{\overline{k}}$ so for dimension reasons we have $N_{\overline{k}} = \G_{a,\overline{k}}$. The quotient $A = G/N$ is an abelian variety because it is so over $\overline{k}$.

\medskip
We finally assume $G_{\overline{k}} = B \times (\G_{a,\overline{k}} \rtimes \G_{m,\overline{k}})$ where $B$ is an abelian variety. Let $N$ be as above. The quotient $Q = G / N$ is a pseudo-abelian variety and hence sits in a unique exact sequence $0 \to A' \to Q \to U \to 0$ where $A'$ is an abelian variety and $U$ is a commutative unipotent group (see \cite[th. 2.1]{TOT}). Let $p$ be the composite morphism $G \to Q \to U$. Then $p_{\overline{k}}(A_{\overline{k}})$ is a smooth connected subgroup of $U_{\overline{k}}$ and it is a proper algebraic group (as a quotient of $A_{\overline{k}}$) so it is trivial. Moreover $p_{\overline{k}}(\G_{a,\overline{k}} \rtimes \G_{m,\overline{k}})$ is trivial because $\G_{a,\overline{k}} \rtimes \G_{m,\overline{k}}$ has no non-trivial unipotent quotient. Hence $U$ is trivial and $Q = A'$ is an abelian variety. Because of the exact sequence $1 \to N_{\overline{k}} \to G_{\overline{k}} \to A_{\overline{k}}' \to 0$ we have $N_{\overline{k}} = \G_{a,\overline{k}} \rtimes \G_{m,\overline{k}}$. The subgroup $N \cap \ant{G}$ is trivial (since it is so after extension to $\overline{k}$) so for dimension reasons we have $G = \ant{G} \cdot N = \ant{G} \times N$. The group $\ant{G}$ is an abelian variety and, by Lemma \ref{lemma_forms_GaGm}, we already have $N = \G_a \rtimes \G_m$.

Let $K/k$ be a finite separable extension such that $X$ admits a $K$-rational point $x$. The isotropy group of $x$ is of the form $\{1\} \times M$ where $M$ is a subgroup of $N_K$. Then $X_K = A_K \times (N_K / M)$ and $N_K/M$ is a form of $\A^1_K$ which is a torsor under the subgroup $\G_{a,K}$ of $N_K$ (because it is so after extension to $\overline{k}$), so we already have $N_K / M = \A^1_K$.

The formation of the Albanese morphism $\alb{X} : X \to \Albu{X}$ commutes with separable extensions and the formation of the affinization morphism $p : X \to \Spec \mathcal{O}(X)$ commutes with arbitrary extensions. The morphism deduced from $p$ after extension of scalars is the projection $\pr_2 : X_K \to \A^1_K$, so $\Spec \mathcal{O}(X) = \A^1_k$ because $K/k$ is separable. Since $(\alb{X,K},\pr_2) : X_K \to A_K \times \A^1_K$ is an isomorphism, so is $(\alb{X},p) : X \to \Albu{X} \times \A^1_k$. \qedhere
\end{mystep}
\end{proof}

\begin{remark}
\begin{enumerate}[wide, labelwidth=!, labelindent=0pt]
\item The abelian variety $A$ appearing in the group $G$ is the Albanese variety of $X$.
\item If $G$ is a semi-abelian variety given by an exact sequence $1 \to T \to G \to A \to 0$ where $T$ is a form of $\G_m$ then, by definition of $\aff{G}$, we have $\aff{G} = T$. Since $G = \ant{G} \cdot \aff{G}$ and $\ant{G}$ is central, the group $G$ is commutative. We set $\Gamma = \Gal(k_s / k)$ and denote by $\Lambda \simeq \Z$ the $\Gamma$-module of characters of $T$. The Barsotti-Weil formula gives $\Ext^1(A,T) \simeq \Hom_{gp}^\Gamma(\Lambda,\widehat{A}(k_s))$ where $\widehat{A} = \Pic^\circ(A)$ is the dual abelian variety (see \cite[prop. 5.4.10]{BRI_structure}). If $T = \G_m$ then $\Ext^1(A,\G_m) \simeq \widehat{A}(k)$. If $T$ is a non-trivial form of $\G_m$ then there exists a Galois extension $K/k$ of degree $2$ such that $T_K \simeq \G_{m,K}$; writing $\Gamma = \{\id,\sigma\}$, we have $\Ext^1(A,T) \simeq \Hom_{gp}^{\{\id,\sigma\}}(\Z,\widehat{A}(K)) \simeq \{x \in \widehat{A}(K) \mid \sigma(x) = -x \}$.
\item If we have an exact sequence $0 \to \G_a \to G \to A \to 0$ then, as above, $G$ is commutative. Such exact sequences are classified by $\Ext^1(A,\G_a) \sim H^1(A,\mathcal{O}_A)$ (see \cite[ch. III, 17.6, rem.]{OORT}). In case $A$ is an elliptic curve $E$, we have $\omega_E = \mathcal{O}_E$ so, by Serre duality, $\Ext^1(E,\G_a) \simeq k$; therefore there exists, up to isomorphism, a unique group $G$ given by such an exact sequence and not isomorphic to the direct product $E \times \G_a$ (it is the universal vector extension of $E$).
\item The case \ref{case_pseudo_abelian} of the theorem can only occur over an imperfect field. If $G$ is a pseudo-abelian variety then the Albanese map is not so well-behaved since its fibers may be non-reduced. We recall the example given in \cite[Rem. 3.4]{BRI_anv}. Let $U$ be a form of $\G_a$ over an imperfect field. There exists an extension $0 \to \alpha_p \to H \to U \to 0$ such that $H$ contains no non-trivial smooth connected subgroup. If $E$ is a supersingular elliptic curve then the kernel of the Frobenius morphism $F : E \to E^{(p)}$ is $\alpha_p$. The group $G = E \stackrel{\alpha_p}{\times} H$ is a pseudo-abelian variety. By construction, we have the exact sequence $0 \to H \to G \to E^{(p)} \to 0$. The quotient $G \to E^{(p)}$ is the Albanese morphism.
\end{enumerate}
\end{remark}

\begin{proposition}
Let $X$ be a variety of Albanese codimension $1$ and let $G$ be a smooth connected algebraic group. If $X$ is almost homogeneous under a faithful action of $G$, if $X$ has a $k$-rational point and if $G$ is not a pseudo-abelian variety (in particular if $k$ is perfect) then the group $\aff{G}$ is smooth, $X$ is the associated fiber bundle $G \stackrel{\aff{G}}{\times} C$ where $C$ is the fiber at the neutral element of the Albanese morphism of $X$, and the unique normal equivariant completion of $X$ is the associated fiber bundle $G \stackrel{\aff{G}}{\times} \overline{C}$ where $\overline{C}$ is the regular completion of the curve $C$.
\end{proposition}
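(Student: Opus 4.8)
The plan is to read off the structure of $X$ from the associated‑fiber‑bundle description of the Albanese morphism (Proposition \ref{prop_assoc_fiber_bundle}); the only genuinely delicate point is the smoothness of $\aff{G}$, and once that is settled everything else is formal.

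\emph{Smoothness of $\aff{G}$ and the identification $\Alb{X}=G/\aff{G}$.} Since smoothness is a geometric property and the formation of $\aff{G}$ commutes with separable algebraic extensions (Proposition \ref{prop_Rosenlicht_decomp}), I would first pass to the separable closure $k_s$, where $(\aff{G})_{k_s}=\aff{(G_{k_s})}$. Over $k_s$ the open orbit acquires a rational point, and the Albanese codimension is preserved (Proposition \ref{prop_Albcodim_field_ext}), so the open orbit of $X_{k_s}$ is a homogeneous variety of Albanese codimension $1$ under the faithful action of $G_{k_s}$. Theorem \ref{theo_class_section} then places $G_{k_s}$ in one of the five listed cases. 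As $G$ is not pseudo-abelian it admits a nontrivial smooth connected normal affine subgroup, whose base change shows that $G_{k_s}$ is not pseudo-abelian either; thus case (3) is excluded, and in each remaining case $\aff{(G_{k_s})}$ is a form of $\G_m$, of $\G_a$, of $\G_a\rtimes\G_m$, or of $\PGL_2$, all of which are smooth. Hence $\aff{G}$ is smooth. The same list shows that the stabilizer of a point of the open orbit lies in $\aff{(G_{k_s})}$, so the canonical surjection $\Alb{G}=G/\aff{G}\to\Alb{X}$ (which is an isogeny by Propositions \ref{prop_Alb_hom} and \ref{prop_Alb_qhom}, both sides having dimension $\dim X-1$) has trivial kernel over $k_s$, hence trivial kernel over $k$ by faithfully flat descent. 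In other words $\Alb{X}=G/\aff{G}$.

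\emph{The bundle $X=G\stackrel{\aff{G}}{\times}C$.} With this identification, $\alb{X}\colon X\to G/\aff{G}$ is a $G$-equivariant morphism to a homogeneous space whose fiber over the base point is $C$, and Proposition \ref{prop_assoc_fiber_bundle} yields the cartesian square exhibiting $X=G\stackrel{\aff{G}}{\times}C$. Because $\aff{G}$ is smooth, the quotient $G\to G/\aff{G}$ is smooth (\cite[Prop. 2.6.5]{BRI_structure}), so the base-changed projection $G\times C\to X$ is smooth; as $X$ is normal, $G\times C$ and hence $C$ are normal. Thus $C$ is a regular curve.

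\emph{The completion $\overline{X}=G\stackrel{\aff{G}}{\times}\overline{C}$.} Let $\overline{C}$ be the regular completion of $C$. The $\aff{G}$-action on $C$ extends uniquely to $\overline{C}$, since automorphisms of a regular curve extend to its regular completion (Weil's extension theorem, cf. \cite[8.4]{BLR}); this makes $\overline{C}$ a smooth projective $\aff{G}$-curve. As $\aff{G}$ is affine, a power of an ample line bundle on $\overline{C}$ is $\aff{G}$-linearizable, so the associated bundle $\overline{X}'=G\stackrel{\aff{G}}{\times}\overline{C}$ exists (Proposition \ref{prop_assoc_fiber_bundle}). It is proper, being a bundle with proper fiber $\overline{C}$ over the abelian variety $G/\aff{G}$; it is normal, by the same smooth-descent argument as above; and it contains $X$ as a dense $G$-stable open subscheme with boundary $G\stackrel{\aff{G}}{\times}(\overline{C}\setminus C)$, a finite union of $G$-homogeneous divisors. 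Hence $\overline{X}'$ is a normal equivariant completion of $X$ whose boundary is a union of homogeneous divisors, and by the uniqueness established in Proposition \ref{prop_uniqueness_completion} and Theorem \ref{theo_unique_compl} it must coincide with $\overline{X}$.

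\emph{Main obstacle.} The crux is the smoothness of $\aff{G}$: over an imperfect field $\aff{G}$ can fail to be smooth and the fibers of the Albanese morphism can be non-reduced, which is exactly the pathology exhibited by pseudo-abelian varieties. The hypothesis is designed to exclude this, and the most economical way I see to use it is to descend to $k_s$ and invoke Theorem \ref{theo_class_section}, in which the pseudo-abelian case is the sole source of non-smoothness. A more self-contained route would analyse directly the faithful action of $\aff{G}$ on the curve $C$, using that the smooth connected normal subgroup supplied by the non-pseudo-abelian hypothesis already acts on $C$ with a dense orbit; but the reduction to the classification is shorter.
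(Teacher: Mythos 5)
Your proof is correct, and on the main point it runs in the opposite direction from the paper's. The paper dismisses the first two assertions as ``clear'' from Theorem \ref{theo_class_section} (in every non-pseudo-abelian case $\aff{G}$ is a form of $\G_m$, $\G_a$, $\G_a \rtimes \G_m$ or $\PGL_2$, hence smooth, and $\aff{G} \cdot G_x = \aff{G}$) -- exactly the content you spell out via descent from $k_s$ -- and then argues top-down for the last one: it starts from the completion $\overline{X}$ already furnished by Theorem \ref{theo_unique_compl}, forms the cartesian square for $\alb{\overline{X}} : \overline{X} \to A = G/\aff{G}$, and deduces from the smoothness of $G \to A$ (here is where smoothness of $\aff{G}$ is used) and the normality of $\overline{X}$ that the fiber $\overline{C}$ of $\alb{\overline{X}}$ is normal, hence is the regular completion of $C$. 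You instead build the completion bottom-up as $G \stackrel{\aff{G}}{\times} \overline{C}$ and invoke uniqueness. Your route yields an explicit construction, but at the price of three ingredients the paper never needs: extending the $\aff{G}$-action from $C$ to $\overline{C}$, linearizing a power of an ample bundle on $\overline{C}$, and forming the associated bundle; in the paper's direction $\overline{C}$ sits inside $\overline{X}$, is automatically $\aff{G}$-stable, and carries the action for free. Note also that once you have produced a normal equivariant completion, Theorem \ref{theo_unique_compl} alone identifies it with $\overline{X}$, so your verification that the boundary is a union of $G$-homogeneous divisors (needed only if one wants to quote Proposition \ref{prop_uniqueness_completion} directly) is dispensable -- it is true, by the orbit argument in the converse direction of the proof of Proposition \ref{prop_equiv_divhom_Alb}, but you assert rather than prove it.

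One citation should be repaired: Weil's extension theorem in \cite[8.4]{BLR} concerns rational maps from smooth varieties \emph{into group schemes} and does not directly give what you use, namely that an action of the (smooth, affine) group $\aff{G}$ on the regular curve $C$ extends to an action on its regular completion $\overline{C}$. That statement is standard -- functoriality of the regular completion handles abstract automorphisms, and the scheme-theoretic family version follows from the valuative criterion together with the group law, as in \cite{LAU} -- but as written your one-line justification points to the wrong theorem.
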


\begin{proof}
Everything is clear, except the last statement. In every case we have $\aff{G} \cdot G_x = \aff{G}$, so that we have the cartesian square 
\begin{center}
\begin{tikzpicture}[baseline=(m.center)]
\matrix(m)[matrix of math nodes,
row sep=2.5em, column sep=2.5em,
text height=1.5ex, text depth=0.25ex, minimum width=3em, anchor=base, ampersand replacement=\&]
{G \times \overline{C} \&  G \\
\overline{X} \& A = G / \aff{G}\\};
\path[->] (m-1-1) edge node[above] {$\pr_{1}$}(m-1-2);
\path[->] (m-1-1) edge (m-2-1);
\path[->] (m-2-1) edge node[below] {$\alb{\overline{X}}$}(m-2-2);
\path[->] (m-1-2) edge (m-2-2);
\end{tikzpicture}
\end{center}
where $\overline{C}$ is the fiber of $\alb{\overline{X}}$ at the base point of $A$. The quotient morphism $G \to A$ is smooth because the subgroup $\aff{G}$ is smooth. So the morphism $G \times \overline{C} \to \overline{X}$ is smooth. Moreover $\overline{X}$ is normal, so $G \times \overline{C}$ is normal too, and hence $\overline{C}$ is normal (see \cite[IV2, Cor. 6.5.4 p143]{EGA}). Therefore $\overline{C}$ is the regular completion of $C$.
\end{proof}

\begin{remark}
It follows from the classification that if $G$ is neither a pseudo-abelian variety nor given by an extension $0 \to U \to G \to A \to 0$ where $U$ is a non-trivial form of $\G_a$ (these assumptions are satisfied if $k$ is perfect), then the regular completion of $C$ is $\PP^1$. Moreover, the morphism $\alb{\overline{X}} : \overline{X} \to A$ has sections. If $D$ is the divisor given by the image of such a section and $\mathcal{O}(D)$ is the associated invertible sheaf on $\overline{X}$, then $\mathcal{E} = (\alb{\overline{X}})_* \mathcal{O}(D)$ is a locally free sheaf of rank $2$ on $A$. The natural morphism $(\alb{\overline{X}})^* \mathcal{E} = (\alb{\overline{X}})^* (\alb{\overline{X}})_* \mathcal{O}(D) \to \mathcal{O}(D)$ is surjective so it corresponds to a morphism $\overline{X} \to \PP(\mathcal{E})$, which is an isomorphism (see \cite[Prop. V.2.2]{HART} for more details, where the arguments are given for surfaces but remain valid in any dimension).
\end{remark}

\begin{theorem}\label{theo_class_PE}
We assume that the field $k$ is perfect. Let $\overline{X}$ be a normal proper variety of Albanese codimension $1$ and having a $k$-rational point. Set $A = \Alb{\overline{X}}$. Then $\overline{X}$ is almost homogeneous under a faithful action of a smooth connected algebraic group $G$ if and only if, up to isomorphism, we have $\overline{X} = \PP(\mathcal{E})$ where one of the following cases holds:
\begin{enumerate}
\item We have $\mathcal{E} = \mathcal{O}_A \oplus \mathcal{O}_A$. In this case $\overline{X} = A \times \PP^1$, and $G$ is isomorphic to $A \times \G_m$ or $A \times T$ (where $T$ is the form of $\G_m$ given as the centralizer of a separable point of degree $2$ on $\PP^1$) or $A \times \G_a$ or $A \times (\G_a \rtimes \G_m)$ or $A \times \PGL_2$ with the natural action on $\overline{X}$.
\item There exists $\mathcal{L} \in \Pic^\circ(A) \setminus\{\mathcal{O}_A\}$ such that $\mathcal{E} = \mathcal{L} \oplus \mathcal{O}_A$. In this case, the group $G$ is given by a non-split exact sequence $0 \to \G_m \to G \to A \to 0$ and $\overline{X}$ is isomorphic to the (unique) normal equivariant completion of $G$.
\item The sheaf $\mathcal{E}$ is indecomposable and given by an exact sequence $0 \to \mathcal{O}_A \to \mathcal{E} \to \mathcal{O}_A \to 0$. In this case, the group $G$ is given by a non-split exact sequence $0 \to \G_a \to G \to A \to 0$ and $\overline{X}$ is isomorphic to the (unique) normal equivariant completion of $G$.
\end{enumerate}
\end{theorem}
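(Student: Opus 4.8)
The plan is to prove both implications, handling the ``if'' direction by direct construction and spending the bulk of the effort on the ``only if'' direction, whose strategy is to reduce everything to the $\PP^1$-bundle $\overline{X} = \PP(\mathcal{E}) \to A$ and then to pin down $\mathcal{E}$, up to the twist ambiguity $\PP(\mathcal{E}) = \PP(\mathcal{E} \otimes \mathcal{M})$, from the structure of the acting group. For the ``if'' direction I would exhibit the action in each case: $A \times \PP^1$ is visibly almost homogeneous under each of the listed groups, the second factor acting on $\PP^1$ with a dense orbit; while for the two nontrivial bundles I would recover $G$ as the open orbit, namely the complement of the boundary section(s), endowed with its structure of a $\G_m$- or $\G_a$-extension of $A$, and check that this $G$ acts faithfully with a dense orbit.

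For the ``only if'' direction, I would first pass to the open orbit $X_0$, which is homogeneous of Albanese codimension $1$ with $\Alb{X_0} = A$, and apply Theorem \ref{theo_class_section}. Since $k$ is perfect, the pseudo-abelian case does not occur, so either $G$ is a product (such as $A \times \PGL_2$, $A \times (\G_a \rtimes \G_m)$, or a split extension) or $G$ is a commutative extension of $A$ by a one-dimensional linear group with $X_0$ a torsor under it. The preceding Proposition gives $\overline{X} = G \stackrel{\aff{G}}{\times} \overline{C}$ with $\aff{G}$ smooth, and the Remark gives $\overline{C} = \PP^1$ together with a section of $\alb{\overline{X}}$, whence $\overline{X} = \PP(\mathcal{E})$ for a locally free sheaf $\mathcal{E}$ of rank $2$ on $A$. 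In the product cases the fibration splits and $\overline{X} = A \times \PP^1 = \PP(\mathcal{O}_A \oplus \mathcal{O}_A)$, which is case $1$; it then remains to treat the torsor cases.

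In the torsor cases I would read off $\mathcal{E}$ from the boundary. When $\aff{G}$ is a form of $\G_m$, the open orbit $X_0 = G$ is the $\G_m$-torsor attached to a line bundle, and by Barsotti--Weil $\Ext^1(A, \G_m) \simeq \Pic^\circ(A)$ the extension is classified by some $\mathcal{L} \in \Pic^\circ(A)$; the completion of this torsor is $\PP(\mathcal{L} \oplus \mathcal{O}_A)$, the two boundary sections being the zero and infinity sections. If $\mathcal{L} = \mathcal{O}_A$ the extension splits and we are back in case $1$; otherwise $\mathcal{E} = \mathcal{L} \oplus \mathcal{O}_A$ with $\mathcal{L} \in \Pic^\circ(A) \setminus \{\mathcal{O}_A\}$, which is case $2$, and $G$ is the corresponding non-split semi-abelian variety. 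When $\aff{G} = \G_a$, the analogous computation uses $\Ext^1(A, \G_a) \simeq H^1(A, \mathcal{O}_A) \simeq \Ext^1(\mathcal{O}_A, \mathcal{O}_A)$: the completion of the $\G_a$-torsor is $\PP(\mathcal{E})$ with $\mathcal{E}$ a self-extension $0 \to \mathcal{O}_A \to \mathcal{E} \to \mathcal{O}_A \to 0$ whose class matches the group extension class, yielding case $1$ when split and the indecomposable case $3$ otherwise. In each instance $\overline{X}$ is the unique normal equivariant completion of $G$ by Theorem \ref{theo_unique_compl}.

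The main obstacle is the interplay between forms and Galois descent in the multiplicative case. Over $\overline{k}$ everything splits, but when $\aff{G}$ is a non-split torus the two geometric boundary sections are exchanged by $\Gal(k_s/k)$, so it is not a priori clear that $\mathcal{E}$ can be normalized to $\mathcal{L} \oplus \mathcal{O}_A$ with $\mathcal{L}$ defined over $k$. Reconciling this --- showing that the hypothesis that $\overline{X}$ carries a $k$-rational point, together with the section furnished by the Remark, forces the relevant sub-line-bundle (hence $\mathcal{L} \in \Pic^\circ(A)$) to descend to $k$, and conversely organizing the several groups acting on $A \times \PP^1$ in case $1$ by classifying the smooth connected subgroups of $\PGL_2$ with a dense orbit --- is where the care is needed; the additive case is comparatively rigid, since over a perfect field $\G_a$ has no non-trivial forms.
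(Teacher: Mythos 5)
Your overall architecture matches the paper's: in the forward direction you specialize Theorem \ref{theo_class_section} to the open orbit (correctly discarding the pseudo-abelian case since $k$ is perfect) and identify $\overline{X}$ via uniqueness of the normal equivariant completion; in the converse direction you use Serre's theorem in the multiplicative case. The paper proceeds the same way, except that it does not pass through the section of $\alb{\overline{X}}$ and the pushforward $\mathcal{E} = (\alb{\overline{X}})_*\mathcal{O}(D)$ from the Remark: in each torsor case it exhibits an explicit equivariant open immersion $G \hookrightarrow \PP(\mathcal{E})$ (into $\PP(\mathcal{L} \oplus \mathcal{O}_A)$ in the $\G_m$ case, into $G \stackrel{\G_a}{\times} \PP(V)$ with $V$ the standard $2$-dimensional $\G_a$-representation in the $\G_a$ case) and then invokes uniqueness. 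However, there is a genuine gap at exactly the point you flagged, and your proposed resolution is the wrong one. When $\aff{G} = T$ is a nontrivial form of $\G_m$, split by a quadratic Galois extension $K/k$, the boundary of $\overline{X}$ is a \emph{single irreducible divisor} (the $G$-orbit of the degree-two point of $\overline{C} \setminus C$), whose two geometric components --- the zero and infinity sections of $\PP(\mathcal{L} \oplus \mathcal{O}_{A_K})$ --- are exchanged by $\Gal(K/k)$. The constraint this swap imposes is ${}^\sigma\mathcal{L} \simeq \mathcal{L}^{-1}$, not ${}^\sigma\mathcal{L} \simeq \mathcal{L}$, so the sub-line-bundle does \emph{not} descend to $k$ (except in degenerate cases), and your plan of ``forcing $\mathcal{L}$ to descend'' cannot be carried out. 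More decisively, even if it could, it would misfile this case into case $2$ of the theorem, which is impossible: $\aff{G}$ is intrinsic to $G$, and in case $2$ the kernel of $G \to A$ is the split $\G_m$, while here it is the nontrivial form $T$; also in case $2$ the boundary consists of two disjoint divisors defined over $k$, incompatible with the irreducible boundary. What the paper argues instead is that the Galois exchange of the two sections forces $\mathcal{L}$ to be \emph{trivial}, whence $\overline{X}_K \simeq A_K \times \PP^1$, then $\overline{X} \simeq A \times \PP^1$, and the extension $0 \to T \to G \to A \to 0$ splits (triviality of the class over $K$ implies triviality over $k$, as $\Hom(A_K,\G_{m,K}) = 0$), so the nontrivial-form case collapses into case $1$ with $G \simeq A \times T$. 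Your proposal as written leaves extensions by nontrivial forms of $\G_m$ unaccounted for in the trichotomy.

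A second, smaller gap lies in your converse for case $3$. There you are \emph{handed} an indecomposable self-extension $\mathcal{E}$ of $\mathcal{O}_A$ and must construct a group acting on $\PP(\mathcal{E})$ with dense orbit; asserting that ``the class matches the group extension class'' presupposes that the isomorphism $\Ext^1(A,\G_a) \simeq H^1(A,\mathcal{O}_A) \simeq \Ext^1(\mathcal{O}_A,\mathcal{O}_A)$ is realized geometrically by $G \mapsto G \stackrel{\G_a}{\times} V$ and, crucially, that the translation action on the open torsor extends to $\PP(\mathcal{E})$, i.e.\ that $\mathcal{E}$ carries a $G$-linearization. That is the actual content to be proved, and the paper obtains it by a different route: it invokes the structure theory of unipotent vector bundles from \cite[Sect.\ 6.4]{BSU}, writing $E \simeq \mathcal{G} \stackrel{H}{\times} V$ with $\mathcal{G} = \ant{(\Autgp{E}^{\G_m})}$ and $H$ a finite unipotent kernel, and then enlarging by a commuting $\G_a \subset \GL(V)$ to get a group $\mathcal{G} \cdot \G_a$ acting on $\PP(\mathcal{E})$ with open orbit. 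Your extension-class bookkeeping can be made to work, but the compatibility it relies on must be established rather than assumed; as it stands, the construction of the action is missing.
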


\begin{proof}
Assume first that $\overline{X}$ is almost homogeneous. The possibilities for the open orbit $X$ and the group $G$ are given by Theorem \ref{theo_class_section}. The fiber $\overline{C}$ is isomorphic to $\PP^1$ and $C$ is a homogeneous curve which is open in $\overline{C}$. In particular $C$ contains at least one $k$-rational point.

Assume that $G$ is given by an exact sequence $0 \to \G_m \to G \to A \to 0$ and $X = G$. The quotient $G \to A$ is a $\G_m$-torsor so there exists an invertible sheaf $\mathcal{L}$ on $A$ such that $G$ is the complement of the zero section in the line bundle corresponding to $\mathcal{L}$. Then $\mathcal{E} = \mathcal{L} \oplus \mathcal{O}_A$ has a natural $G$-linearization so the open immersion $G \hookrightarrow \PP(\mathcal{E})$ is equivariant. By uniqueness of the normal equivariant completion, we have $\overline{X} = \PP(\mathcal{E})$. It follows from \cite[VII.16, th.6]{SerreAGCF} that the line bundle corresponding to $\mathcal{L}$ is algebraically trivial, that is to say $\mathcal{L} \in \Pic^\circ(A)$. 

Assume that $G$ is given by an exact sequence $0 \to T \to G \to A \to 0$ and $X = G$, where $T$ is a non-trivial form of $\G_m$. There exists a quadratic Galois extension $K/k$ such that $T_K \simeq \G_m$. We have $C = T$ and the complement of $C$ in $\overline{C}$ is a point with residue field $K$, so that the complement of $X$ in $\overline{X}$ is a unique irreducible divisor. After extending the scalars to $K$, we recover the situation of the former paragraph. The complement of $X_K$ in $\overline{X}_K$ is the union of the zero section and the section at infinity, and the Galois group $\Gal(K/k)$ exchanges those two sections, which forces $\mathcal{L}$ to be trivial. Hence $\overline{X}_K \simeq A_K \times \PP^1$, and we already have $\overline{X} \simeq A \times \PP^1$.

Assume that $G$ is given by an exact sequence $0 \to \G_a \to G \to A \to 0$ and $X = G$. We have a two-dimensional representation $V$ of $\G_a$ given by $\fonction{\G_a}{\GL_2}{t}{\begin{pmatrix} 1 & t \\ 0 & 1 \end{pmatrix}}$. The line spanned by the second vector of the canonical basis is the trivial representation $V_0$ of $\G_a$. The quotient $V / V_0$ is also the trivial representation, so that $V$ is an extension $0 \to V_0 \to V \to V_0 \to 0$. Then $G \stackrel{\G_a}{\times} V \to A$ is a vector bundle of rank $2$, corresponding to a $G$-linearized sheaf $\mathcal{E}$ given by an exact sequence $0 \to \mathcal{O}_A \to \mathcal{E} \to \mathcal{O}_A \to 0$. The action on $G \stackrel{\G_a}{\times} V$ yields an action on $G \stackrel{\G_a}{\times} \PP(V) = \PP(\mathcal{E})$ such that the open immersion $G = G \stackrel{\G_a}{\times} \G_a \hookrightarrow G \stackrel{\G_a}{\times} \PP(V)$ is equivariant. By uniqueness of the normal equivariant completion, we have $\overline{X} = \PP(\mathcal{E})$.

In the remaining cases we readily have $\overline{X} \simeq A \times \PP^1$.

\medskip
Conversely, let us show that is every case, the variety $\overline{X}$ is almost homogeneous. If $\mathcal{E} = \mathcal{O}_A \oplus \mathcal{O}_A$ then there is nothing to do.

If $\mathcal{E} = \mathcal{L} \oplus \mathcal{O}_A$ where $\mathcal{L} \in \Pic^\circ(A) \setminus\{\mathcal{O}_A\}$, then by \cite[VII.16, th.6]{SerreAGCF} again, the complement of the union of the two sections is an algebraic group $G$ which sits in an exact sequence $1 \to \G_m \to G \to A \to 0$ and the action of $G$ on itself extends to an action on $\overline{X}$.

Assume that the sheaf $\mathcal{E}$ is given by an exact sequence $0 \to \mathcal{O}_A \to \mathcal{E} \to \mathcal{O}_A \to 0$. Then the vector bundle $E$ over $A$ corresponding to $\mathcal{E}$ is unipotent so we can use the results of \cite[Sect. 6.4]{BSU}, which we recall. The functor of $\G_m$-equivariant automorphisms of $E$ is represented by a group scheme $\Autgp{E}^{\G_m}$ locally of finite type. The smooth connected group $\mathcal{G} = \ant{(\Autgp{E}^{\G_m})}$ acts transitively on $A$. Let $V$ be the fiber at $0$ of $E$, which is a two-dimensional vector space. The kernel $H$ of the morphism $\mathcal{G} \to A$ is unipotent (not necessarily reduced) and the action of $H$ on $V$ is a faithful linear representation. Then the vector bundle $E$ is identified with $\mathcal{G} \stackrel{H}{\times} V$. We can choose a basis of $V$ such that the image of $H$ is $\GL(V) \simeq \GL_2$ lies in $\begin{pmatrix} 1 & * \\ 0 & 1 \end{pmatrix} \simeq \G_a$. Hence there is an action of $\G_a$ on $V$ commuting with the action of $H$, so that we get a faithful action of $\G_a$ on $E$. By construction, the action of $\mathcal{G} \cdot \G_a \leq \Autgp{E}$ on $E$ induces an action on $\PP(\mathcal{E}) = \mathcal{G} \stackrel{H}{\times} \PP(V) $ with an open orbit.
\end{proof}

\section{Groups of automorphisms}\label{sect_aut}

In Theorem \ref{theo_class_PE}, we have considered almost homogeneous varieties of the form $\pi : \overline{X} = \PP(\mathcal{E}) \to A$ and we already know the largest smooth connected group of $\Autgp{\overline{X}}$. We now determine whether $\Autgp{\overline{X}}$ is smooth or not, by computing the dimension of the Lie algebra $\Lie(\Autgp{\overline{X}})$.

\begin{example}
If $\mathcal{E}$ is an indecomposable sheaf given by an exact sequence $0 \to \mathcal{O}_A \to \mathcal{E} \to \mathcal{O}_A \to 0$ then the largest smooth connected group $G$ of $\Autgp{\overline{X}}$ sits in an exact sequence $0 \to \G_a \to G \to A \to 0$. Here we construct an example where $\dim \Lie(\Autgp{\overline{X}}) = \dim A + 2$ (and not $\dim A + 1)$, so that $\Autgp{\overline{X}}$ is not smooth. We use the fact, which will be proved in Theorem \ref{theo_aut}, that $\dim \Lie(\Autgp{\overline{X}}) = \dim A + \dim H^0(A, \Sym^2 \mathcal{E})$.

We assume that the characteristic of $k$ is $p=2$. Assume moreover that there exists an abelian variety $B$ which sits in an exact sequence $0 \to H \to B \to A \to 0$ where $H$ is isomorphic to $\alpha_p$ or $\Z/p\Z$. Let $\phi : H \to \G_a$ be an injective group morphism, so that $H$ is isomorphic to the subgroup of $\G_a$ given by the equation $x^p = \lambda x$ for some $\lambda \in k$. Let $V$ be the affine plane with a basis $(e_1,e_2)$. We consider the linear action of $H$ given by $h \mapsto \begin{pmatrix} 1 & \phi(h) \\ 0 & 1 \end{pmatrix}$. Then the associated fiber bundle $E = B \stackrel{H}{\times} V$ is a unipotent vector bundle of rank $2$ over $B/H = A$, corresponding to an indecomposable sheaf given by an exact sequence $0 \to \mathcal{O}_A \to \mathcal{E} \to \mathcal{O}_A \to 0$. In this situation, $B$ is the anti-affine radical $\ant{(\Autgp{E}^{\G_m})}$ of the group of $\G_m$-equivariant automorphisms of $E$. The vector bundle corresponding to $\Sym^2 \mathcal{E}$ is $\Sym^2 E = B \stackrel{H}{\times} \Sym^2 V$, hence we have $H^0(A, \Sym^2 \mathcal{E}) \simeq (\Sym^2 V)^H$. In the basis $(e_1 e_1, e_1 e_2, e_2 e_2)$, the action of $H$ on $\Sym^2 V$ is given by $h \mapsto \begin{pmatrix} 1 & \phi(h) & \phi(h)^2 \\ 0 & 1 & 0 \\ 0 & 0 & 1 \end{pmatrix}$. Thus $(\Sym^2 V)^H = \Vect(e_1 e_1, \lambda e_1 e_2 - e_2 e_2)$ has dimension $2$.
\end{example}

\begin{theorem}\label{theo_aut}
Let $A$ be an abelian variety and let $\mathcal{E}$ be a locally free sheaf of rank $2$ on $A$ such that $\overline{X} = \PP(\mathcal{E})$ is almost homogeneous under the action of a smooth connected group.
\begin{enumerate}
\item The group $\Autgp{\overline{X}}$ is smooth, except if $p=2$, the sheaf $\mathcal{E}$ is indecomposable and given by an exact sequence $0 \to \mathcal{O}_A \to \mathcal{E} \to \mathcal{O}_A \to 0$, and the kernel of $\ant{(\Autgp{E}^{\G_m})} \to A$ is isomorphic to $\alpha_p$ or $\Z/p\Z$.
\item \label{expected_group}If $\Autgp{\overline{X}}$ is smooth then $\Autgp{\overline{X}}^\circ$ is the expected group:
\begin{enumerate}
\item If $\mathcal{E} = \mathcal{O}_A \oplus \mathcal{O}_A$ then $\overline{X} = A \times \PP^1$ and $\Autgp{\overline{X}}^\circ = A \times \PGL_2$.
\item If there exists $\mathcal{L} \in \Pic^\circ(A) \setminus\{\mathcal{O}_A\}$ such that $\mathcal{E} = \mathcal{L} \oplus \mathcal{O}_A$ then we have an exact sequence $0 \to \G_m \to \Autgp{\overline{X}}^\circ \to A \to 0$.
\item If $\mathcal{E}$ is indecomposable and given by an exact sequence $0 \to \mathcal{O}_A \to \mathcal{E} \to \mathcal{O}_A \to 0$ then we have an exact sequence $0 \to \G_a \to \Autgp{\overline{X}}^\circ \to A \to 0$.
\end{enumerate}
\item If $\Autgp{\overline{X}}$ is not smooth then $\dim \Lie(\Autgp{\overline{X}}) = \dim A + 2$.
\end{enumerate}
\end{theorem}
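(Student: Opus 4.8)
The key computation is $\dim\Lie(\Autgp{\overline{X}})=\dim A+\dim H^0(A,\Sym^2\mathcal E)$, which the preceding example already invokes and which I expect to be the technical heart. Once this formula is established, all three parts follow by computing $H^0(A,\Sym^2\mathcal E)$ in each of the three cases of Theorem \ref{theo_class_PE}, and by comparing with the Lie algebra of the expected group $G$.

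\medskip
\textbf{Step 1: the Lie algebra formula.} The plan is to identify $\Lie(\Autgp{\overline{X}})$ with the space of global vector fields $H^0(\overline{X},\mathcal{T}_{\overline{X}})$, since $\Autgp{\overline{X}}$ represents the automorphism functor and its tangent space at the identity is exactly the global sections of the tangent sheaf. I would then use the relative Euler sequence for $\pi:\overline{X}=\PP(\mathcal E)\to A$,
\begin{equation*}
0\to\mathcal O_{\overline{X}}\to\pi^*\mathcal E^{\vee}\otimes\mathcal O_{\PP(\mathcal E)}(1)\to\mathcal T_{\overline{X}/A}\to 0,
\end{equation*}
together with the exact sequence $0\to\mathcal T_{\overline{X}/A}\to\mathcal T_{\overline{X}}\to\pi^*\mathcal T_A\to 0$ relating the relative and absolute tangent sheaves. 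Pushing forward to $A$ and using $\pi_*\mathcal O_{\PP(\mathcal E)}(1)=\mathcal E^{\vee}$ (for the chosen Grothendieck convention, one gets $\pi_*\mathcal O_{\overline X}(1)=\mathcal E$, so that $\pi_*(\pi^*\mathcal E^{\vee}\otimes\mathcal O(1))=\mathcal E^{\vee}\otimes\mathcal E$), the relative tangent sheaf pushes forward to give $\pi_*\mathcal T_{\overline{X}/A}=\mathcal{E}nd(\mathcal E)/\mathcal O_A=\mathcal{E}nd_0(\mathcal E)$, the trace-free (projective) endomorphisms, and $\pi_*(\pi^*\mathcal T_A)=\mathcal T_A=\mathcal O_A^{\dim A}$. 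The contribution $H^0(A,\mathcal T_A)=\dim A$ accounts for the translations by $A$. The remaining term $H^0(A,\mathcal{E}nd_0(\mathcal E))$ is what I must compute; the claim is that in every case it equals $\dim H^0(A,\Sym^2\mathcal E)$. This is a small linear-algebra identity at the level of a rank-$2$ bundle: for rank $2$ one has a canonical isomorphism $\mathcal{E}nd_0(\mathcal E)\simeq\Sym^2\mathcal E\otimes(\det\mathcal E)^{-1}$, and in each of our three cases $\det\mathcal E$ is trivial (it is $\mathcal O_A$, $\mathcal L$, or $\mathcal O_A$ respectively, the first and third obviously trivial, the second algebraically trivial hence with $h^0(\det\mathcal E)$ behaving as needed), so the two cohomology groups have the same dimension. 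I would also need to check that the higher direct images do not obstruct the count, i.e.\ that the relevant $R^1\pi_*$ terms and the boundary maps $H^0\to H^1$ behave correctly; this is where care with the Euler sequence is required.

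\medskip
\textbf{Step 2: the three case computations.} With the formula in hand, I compute $H^0(A,\Sym^2\mathcal E)$ directly.
\emph{Case (a)}, $\mathcal E=\mathcal O_A\oplus\mathcal O_A$: then $\Sym^2\mathcal E=\mathcal O_A^{\oplus 3}$, so $h^0=3$ and $\dim\Lie=\dim A+3=\dim(A\times\PGL_2)$, matching the smooth group; smoothness holds.
\emph{Case (b)}, $\mathcal E=\mathcal L\oplus\mathcal O_A$ with $\mathcal L\in\Pic^\circ(A)\setminus\{\mathcal O_A\}$: then $\Sym^2\mathcal E=\mathcal L^{\otimes 2}\oplus\mathcal L\oplus\mathcal O_A$, and since $\mathcal L$ and $\mathcal L^{\otimes 2}$ are nontrivial in $\Pic^\circ(A)$ they have no global sections, so $h^0=1$ and $\dim\Lie=\dim A+1$, again matching $\dim G$; smoothness holds.
\emph{Case (c)}, $\mathcal E$ indecomposable, $0\to\mathcal O_A\to\mathcal E\to\mathcal O_A\to 0$: here one analyzes $\Sym^2\mathcal E$ via its induced filtration with graded pieces $\mathcal O_A,\mathcal O_A,\mathcal O_A$. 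The dimension of $H^0(A,\Sym^2\mathcal E)$ now depends on the extension class and on the characteristic, through the behaviour of the Frobenius/squaring map on the relevant cohomology. This is precisely the locus where the extra section can appear: generically $h^0=1$ (matching $\dim A+1$ and the group $G$ with $0\to\G_a\to G\to A\to 0$, so $\Autgp{\overline{X}}$ is smooth), but when $p=2$ and the kernel of $\ant{(\Autgp{E}^{\G_m})}\to A$ is $\alpha_p$ or $\Z/p\Z$, the squaring entry $\phi(h)^2$ in the $\Sym^2$-representation (as exhibited in the preceding example) produces an extra invariant, giving $h^0=2$ and hence $\dim\Lie=\dim A+2$, proving part (3) and the exceptional clause of part (1).

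\medskip
\textbf{The main obstacle.} The hardest and most delicate part is Case (c), and within it the characteristic-$2$ phenomenon. The issue is that $\Sym^2$ of a nonsplit self-extension does \emph{not} split, and its space of invariants (equivalently, $H^0(A,\Sym^2\mathcal E)$) is controlled by how the squaring map interacts with the extension class. The clean way to organize this is to realize $\mathcal E$ as $B\stackrel{H}{\times}V$ for the abelian variety $B=\ant{(\Autgp{E}^{\G_m})}$ and the finite group scheme $H=\ker(B\to A)$, reducing $H^0(A,\Sym^2\mathcal E)$ to the invariants $(\Sym^2 V)^H$ as in the example; then the representation of $H$ on $\Sym^2 V$ has the upper-triangular form with top-right entry $\phi(h)^2$, and the dimension of invariants jumps from $1$ to $2$ exactly when $\phi(h)^2$ is $H$-invariantly expressible in terms of $\phi(h)$, which in characteristic $2$ happens precisely for $H\in\{\alpha_p,\Z/p\Z\}$. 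I would need to verify that these are the only possibilities for the (necessarily height-one or \'etale of order $p$) kernel $H$ compatible with $E$ being a rank-$2$ unipotent bundle of the required type, so that the exceptional list in part (1) is complete. Finally, for part (2), once smoothness is known I identify $\Autgp{\overline{X}}^\circ$ with the smooth connected group $G$ produced in Theorem \ref{theo_class_section}, using that $\dim\Lie(\Autgp{\overline{X}})=\dim G$ forces equality of neutral components and that $G$ is already the largest smooth connected subgroup acting.
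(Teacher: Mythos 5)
Your overall route is the paper's route (global vector fields, the sequence $0 \to T_{\overline{X}/A} \to T_{\overline{X}} \to \pi^* T_A \to 0$, pushforward to $A$, then a case analysis with the unipotent case handled via $E=\mathcal{G}\stackrel{H}{\times}V$ and $(\Sym^2 V)^H$), but Step 1 contains a genuine error: you drop the twist by $\det\mathcal{E}^\vee$. The correct identification is $\pi_*T_{\overline{X}/A}\simeq \det\mathcal{E}^\vee\otimes\Sym^2\mathcal{E}$ (equivalently $\End(\mathcal{E})/\mathcal{O}_A$, which is what your Euler-sequence pushforward actually produces, using $R^1\pi_*\mathcal{O}_{\overline{X}}=0$; see \cite[Ex.~III.8.4]{HART}), so the formula is $\dim\Lie(\Autgp{\overline{X}})=\dim A+\dim H^0(A,\det\mathcal{E}^\vee\otimes\Sym^2\mathcal{E})$; the paper's example could use $\Sym^2\mathcal{E}$ alone only because there $\det\mathcal{E}=\mathcal{O}_A$. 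Your claim that ``in every case the two cohomology groups have the same dimension'' is false, and your case (b) computation relies on the unjustified assertion that $\mathcal{L}^{\otimes 2}$ is nontrivial: take $\mathcal{L}$ a nontrivial $2$-torsion element of $\Pic^\circ(A)$. Then $h^0(\Sym^2\mathcal{E})=h^0(\mathcal{O}_A\oplus\mathcal{L}\oplus\mathcal{O}_A)=2$, and your untwisted formula would yield $\dim\Lie(\Autgp{\overline{X}})=\dim A+2$, i.e.\ non-smoothness, contradicting part (2b) of the theorem. With the correct twist one gets $\det\mathcal{E}^\vee\otimes\Sym^2\mathcal{E}\simeq\mathcal{L}\oplus\mathcal{O}_A\oplus\mathcal{L}^\vee$, whose $h^0$ equals $1$ for \emph{every} nontrivial $\mathcal{L}\in\Pic^\circ(A)$, torsion or not; this is exactly how the paper argues. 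A related wobble: $\End(\mathcal{E})/\mathcal{O}_A$ is \emph{not} the trace-free part $\End_0(\mathcal{E})$ in characteristic $2$ (scalars are trace-free there); the endpoints of your chain of isomorphisms happen to agree, but the middle step is wrong precisely in the characteristic where the theorem is delicate.

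Two smaller points of comparison. For the surjectivity of $H^0(\overline{X},T_{\overline{X}})\to H^0(A,T_A)$ you invoke ``translations by $A$'', but in cases (b) and (c) the abelian variety $A$ does not act on $\overline{X}$; the paper instead identifies this map with the differential of $\Autgp{\overline{X}}^\circ\to A$ and observes that its restriction to the largest smooth connected subgroup $G$ is surjective because $G\to A$ is surjective with \emph{smooth} kernel ($\G_m$ or $\G_a$). In case (c), the verifications you flag as needed are precisely what the paper supplies: by \cite[Sect.~6.4]{BSU} one has $E=\mathcal{G}\stackrel{H}{\times}V$ with $\mathcal{G}=\ant{(\Autgp{E}^{\G_m})}$, the kernel $H$ cannot be $\G_a$ by \cite[Prop.~5.5.1]{BRI_structure}, so $H$ is a nontrivial finite subgroup scheme of $\G_a$ and $\mathcal{G}$ is an abelian variety; the invariant count then reduces to whether there exist $\lambda,\mu$ with $\mu\neq 0$ and $\lambda\phi(h)+\mu\phi(h)^2=0$ for all $h$, i.e.\ whether $\phi(H)$ is killed by a degree-$2$ additive polynomial, which (since $\phi$ is injective) happens exactly when $H\simeq\alpha_p$ or $\Z/p\Z$, completing the exceptional list in part (1). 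Once you repair the $\det\mathcal{E}^\vee$ twist, the rest of your plan goes through and coincides with the paper's proof.
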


\begin{proof}
We can assume that $\mathcal{E}$ is not isomorphic to $\mathcal{O}_A \oplus \mathcal{O}_A$ (up to tensoring by some invertible sheaf), otherwise there is nothing to do.

We want to compute $\dim \Lie(\Autgp{\overline{X}})$. As we saw in Theorem \ref{theo_class_PE}, even when $k$ is not perfect, the largest smooth connected subgroup $G$ of $\Autgp{\overline{X}}$ has dimension $\dim A + 1$ (and the kernel of the natural morphism $G \to A$ is isomorphic to $\G_m$ or $\G_a$). In particular $\dim \Lie(\Autgp{\overline{X}}) \geq \dim A + 1$, and $\Autgp{\overline{X}}$ is smooth if and only if the inequality is an equality.

We have $\dim \Lie(\Autgp{\overline{X}}) = \dim H^0(\overline{X},T_{\overline{X}})$ where $T_{\overline{X}}$ is the tangent bundle. By definition, the relative tangent bundle $T_{\overline{X}/A}$ is given by the exact sequence $0 \to T_{\overline{X}/A} \to T_{\overline{X}} \to \pi^* T_A \to 0$. This gives an exact sequence \[0 \to H^0(\overline{X},T_{\overline{X}/A}) \to H^0(\overline{X},T_{\overline{X}}) \to H^0(\overline{X},\pi^* T_A).\]

The tangent bundle of an algebraic group is trivial so $H^0(\overline{X},\pi^* T_A) = H^0(A,T_A) \simeq \Lie(A)$. The morphism $\pi$ induces a group morphism $\Autgp{\overline{X}}^\circ \to \Autgp{A}^\circ = A$ and the arrow $H^0(\overline{X},T_{\overline{X}}) \to H^0(A,T_A)$ is identified with the differential of $\Autgp{\overline{X}}^\circ \to A$ (see \cite[p. 54]{BSU}). Since the restriction $G \to A$ is surjective with smooth kernel, this differential is surjective, thus $\dim \Lie(\Autgp{\overline{X}}) = \dim A + \dim H^0(\overline{X},T_{\overline{X}/A})$. Moreover $H^0(\overline{X},T_{\overline{X}/A}) = H^0(A, \pi_*T_{\overline{X}/A})$, $T_{\overline{X}/A} \simeq \pi^*(\det \mathcal{E}^\vee) \otimes \mathcal{O}_{\overline{X}}(2)$ and $\pi_*T_{\overline{X}/A} \simeq \det \mathcal{E}^\vee \otimes \Sym^2 \mathcal{E}$ (see \cite[ex. III.8.4]{HART}).

Assume $\mathcal{E} = \mathcal{L} \oplus \mathcal{O}_A$ where $\mathcal{L} \in \Pic^\circ(A) \setminus\{\mathcal{O}_A\}$. We have $\pi_*T_{\overline{X}/A} \simeq \det \mathcal{E}^\vee \otimes \Sym^2 \mathcal{E} \simeq \mathcal{L}^\vee \otimes (\Sym^2 \mathcal{L} \oplus \mathcal{L} \oplus \mathcal{O}_A) \simeq \left(\mathcal{L}^\vee \otimes \Sym^2 \mathcal{L}\right) \oplus \mathcal{O}_A \oplus \mathcal{L}^\vee$. The canonical isomorphisms $\left(\mathcal{L}^\vee\right)^{\otimes 2} \otimes \Sym^2 \mathcal{L} \simeq \Sym^2 \left(\mathcal{L}^\vee \otimes \mathcal{L}\right) \simeq \mathcal{O}_A$ yield $\pi_* T_{\overline{X} / A} \simeq \mathcal{L} \oplus \mathcal{O}_A \oplus \mathcal{L}^\vee$. Finally, $\mathcal{L}$ and $\mathcal{L}^\vee$ are non-trivial elements of $\Pic^0(A)$ so the cohomology spaces $H^0(A,\mathcal{L})$ and $H^0(A,\mathcal{L}^\vee)$ vanish (see \cite[II.8.vii]{MUMab}). So as expected $\dim H^0(A,\pi_*T_{\overline{X}/A}) = 1$ and $\Autgp{\overline{X}}^\circ = G$.

Assume that the sheaf $\mathcal{E}$ is indecomposable and given by an exact sequence $0 \to \mathcal{O}_A \to \mathcal{E} \to \mathcal{O}_A \to 0$. We have $\pi_*T_{\overline{X}/A} \simeq \Sym^2 \mathcal{E}$. As in the proof of Theorem \ref{theo_class_PE}, we can use the results of \cite[Sect. 6.4]{BSU}. Let $E \to A$ be the vector bundle corresponding to $\mathcal{E}$ and let $V$ be the fiber of $E$ at $0$. The kernel $H$ of the surjective group morphism $\mathcal{G} = \ant{(\Autgp{E}^{\G_m})} \to A$ is unipotent and the action of $H$ on $V$ is a faithful linear representation. Then the vector bundle $E$ is identified with $\mathcal{G} \stackrel{H}{\times} V$. The subgroup $H$ is not trivial, otherwise $E$ would be the trivial vector bundle $A \times V$. Let $(e_1,e_2)$ be a basis of $V$ and let $\phi : H \to \G_a$ be an injective group morphism such that the action of $H$ on $V$ is given by $h \mapsto \begin{pmatrix} 1 & \phi(h) \\ 0 & 1 \end{pmatrix}$. It follows from \cite[Prop. 5.5.1]{BRI_structure} that $H$ cannot be isomorphic to $\G_a$; in particular $H$ is finite and $\mathcal{G}$ is an abelian variety. The vector bundle corresponding to $\Sym^2 \mathcal{E}$ is $\Sym^2 E = \mathcal{G} \stackrel{H}{\times} \Sym^2 V$, hence we have $H^0(A, \Sym^2 \mathcal{E}) \simeq (\Sym^2 V)^H$. In the basis $(e_1 e_1, e_1 e_2, e_2 e_2)$, the action of $H$ on $\Sym^2 V$ is given by $h \mapsto \begin{pmatrix} 1 & \phi(h) & \phi(h)^2 \\ 0 & 1 & 2 \phi(h) \\ 0 & 0 & 1 \end{pmatrix}$. If $p \neq 2$ then we readily have $(\Sym^2 V)^H = \Vect(e_1 e_1)$, so $\dim H^0(A,\pi_*T_{\overline{X}/A}) = 1$ and $\Autgp{\overline{X}}^\circ = G$. If $p = 2$ then the matrix becomes $\begin{pmatrix} 1 & \phi(h) & \phi(h)^2 \\ 0 & 1 & 0 \\ 0 & 0 & 1 \end{pmatrix}$. Then $\dim (\Sym^2 V)^H = 1$ or $2$, depending on the existence of $\lambda$ and $\mu$ such that $\lambda \phi(h) + \mu \phi(h)^2 = 0$ for all $h$ (and we must have $\mu \neq 0$).
\end{proof}

\begin{remark}
\begin{enumerate}[wide, labelwidth=!, labelindent=0pt]
\item Assume that the field $k$ is perfect. Then $(\Autgp{X}^\circ)_\mathrm{red}$ is the largest smooth connected subgroup of $\Autgp{X}$. Hence the point \ref{expected_group} of Theorem \ref{theo_aut} is always true if we replace $\Autgp{X}^\circ$ with $(\Autgp{X}^\circ)_\mathrm{red}$.
\item If $\mathcal{E}$ is indecomposable and given by an exact sequence $0 \to \mathcal{O}_A \to \mathcal{E} \to \mathcal{O}_A \to 0$ then $\mathcal{G} = \ant{(\Autgp{E}^{\G_m})}$ is an abelian variety isogenous to $A$ and the kernel $H$ of $\mathcal{G} \to A$ is contained in the $p$-torsion subgroup $\mathcal{G}[p]$ (as it is isomorphic to a subgroup of $\G_a$). Hence the smoothness of $\Autgp{\overline{X}}$ depends on the structure of $\mathcal{G}[p]$.

In particular, if $A$ is an elliptic curve then, over the separable closure $k_s$, the $p$-torsion subgroup is isomorphic to $\Z/p\Z \times \mu_p$ (ordinary case) or is a non-trivial extension of $\alpha_p$ by $\alpha_p$ (supersingular case). Since $\mathcal{E}$ is indecomposable, $H$ is not trivial so it is isomorphic to $\Z/p\Z$ or $\alpha_p$. Therefore, if $p=2$ we get $\dim \Lie(\Autgp{\overline{X}}) = 3$ and $\Autgp{\overline{X}}$ is not smooth, thus recovering the result of \cite[Lemma 10]{MAR} for ruled surfaces over an elliptic curve.

\end{enumerate}
\end{remark}

\appendix

\section{Appendix}

For $d \geq 1$, we denote by $\G_a \stackrel{d}{\rtimes} \G_m$ the semidirect product where the multiplicative group $\G_m$ acts on the additive group $\G_a$ with weight $d$.

\begin{lemma}\label{lemma_forms_GaGm}
If $d \geq 1$ then the group $\G_a \stackrel{d}{\rtimes} \G_m$ has no non-trivial forms.
\end{lemma}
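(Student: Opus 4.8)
The plan is to classify the $k$-forms of $G = \G_a \stackrel{d}{\rtimes} \G_m$ by means of the automorphism group scheme: writing $\underline{\Aut}(G)$ for the functor of group-scheme automorphisms, the forms of $G$ split by $k_s$ are in bijection with the Galois cohomology set $H^1(k,\underline{\Aut}(G))$, so it suffices to identify $\underline{\Aut}(G)$, to check that it is smooth (so that $\overline{k}$-forms and $k_s$-forms coincide and the classification above really applies to every form), and finally to prove that this pointed set is trivial. First I would determine $\underline{\Aut}(G)$; then I would compute its $H^1$.

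To compute $\underline{\Aut}(G)$, the key point is that $\G_a$ is the derived subgroup $D(G)$: a direct commutator computation gives $[(0,t),(x,1)] = ((t^d-1)x,1)$, which together with $d \geq 1$ shows $D(G) = \G_a$. Hence $\G_a$ is a characteristic subgroup, preserved by every automorphism $\phi$ over any base, and $\phi$ induces $\phi|_{\G_a} \in \underline{\Aut}(\G_a)$ together with an automorphism $\overline{\phi}$ of the quotient $G/\G_a = \G_m$. Writing $\overline{\phi}(t) = t^{\epsilon}$ with $\epsilon \in \{\pm 1\}$ and $\phi|_{\G_a} = \sum_i a_i x^{p^i}$ as an additive polynomial (in characteristic zero simply $a_0 x$), compatibility of $\phi$ with the conjugation action forces the identity $\phi|_{\G_a}(t^d x) = t^{\epsilon d}\phi|_{\G_a}(x)$. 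Comparing coefficients in $t$ yields $a_i(t^{dp^i} - t^{\epsilon d}) = 0$ for all $i$; since $d \geq 1$ one has $dp^i \neq \epsilon d$ unless $(i,\epsilon)=(0,1)$, so necessarily $\epsilon = 1$ and $\phi|_{\G_a}(x) = a_0 x$ with $a_0$ a unit. This is the crucial rigidity step: the weight-$d$ action of $\G_m$ annihilates all the Frobenius-twisted automorphisms of $\G_a$ that appear over non-reduced bases in characteristic $p$, which is exactly what makes $\underline{\Aut}(G)$ smooth and forces $\overline{\phi}=\id$.

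It then remains to record the lifting freedom. An automorphism with $\overline{\phi}=\id$ and $\phi|_{\G_a} = a_0\cdot$ is determined by $\phi(0,t) = (f(t),t)$, where $f \colon \G_m \to \G_a$ satisfies $f(tt') = f(t) + t^d f(t')$. Because $\G_m$ is linearly reductive, $H^1$ of $\G_m$ with values in $\G_a$ for the nonzero weight $d$ vanishes, so every such $f$ is a coboundary $\lambda(1-t^d)$ (equivalently, all sections of $G \to \G_m$ are conjugate). A short computation of the composition law then gives $\underline{\Aut}(G) \cong \G_a \rtimes \G_m$ with the standard weight-one action; in fact every automorphism is inner, as $\underline{\Aut}(G) = G/Z(G)$ with $Z(G) = \mu_d$. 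In particular $\underline{\Aut}(G)$ is smooth and connected.

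Finally I would prove $H^1(k,\G_a \rtimes \G_m) = 0$ from the split exact sequence $1 \to \G_a \to \G_a \rtimes \G_m \to \G_m \to 1$: the associated sequence of pointed sets $H^1(k,\G_a) \to H^1(k,\G_a \rtimes \G_m) \to H^1(k,\G_m)$ is exact, and both ends vanish, the left by the additive form of Hilbert~90 and the right by Hilbert~90, so the middle set is a single point (alternatively, $\G_a \rtimes \G_m$ is a special group). Hence $G$ has no non-trivial forms. The main obstacle is the rigidity step of the second paragraph: over an imperfect field both $\G_a$ and $\G_m$ admit non-trivial forms, so the argument must genuinely exploit the weight-$d$ action with $d \geq 1$ to rule out inseparable and infinitesimal contributions and to guarantee the smoothness of $\underline{\Aut}(G)$ on which the whole cohomological reduction rests.
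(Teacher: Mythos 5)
Your proof is correct, but it takes a genuinely different route from the paper. The paper never touches the automorphism functor or nonabelian cohomology: it argues in two steps according to whether the form is trivialized by a finite separable or a finite purely inseparable extension. In the separable case it uses that the formation of the unipotent radical commutes with separable extensions to get $\Ru{G} \simeq \G_{a,k}$, takes a maximal torus $T$, and shows the conjugation action gives a non-trivial character of $T$ over $k$ (using the explicit description of $\End(\G_{a,A})$ as $A[F]$), forcing $T \simeq \G_{m,k}$. In the purely inseparable case it observes that forms of $\G_m$ are split by separable extensions, so $T \simeq \G_{m,k}$ at once, and then shows the form of $\A^1$ given by $G/T$ is trivial via a L\"uroth argument on the regular completion; the action on $\PP^1$ fixing $\infty$ then yields a morphism to $\G_a \rtimes \G_m$ from which the splitting is extracted. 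Your approach instead computes $\Aut(G)$ as a functor on all $k$-algebras, and your rigidity step is exactly the right crucial point: the weight-$d$ conjugation identity $\phi|_{\G_a}(t^d x) = t^{\epsilon d}\phi|_{\G_a}(x)$ kills the Frobenius-twisted terms $a_i x^{p^i}$, $i \geq 1$, with nilpotent coefficients that make the bare automorphism functor of $\G_a$ non-representable, and it forces $\epsilon = 1$; together with the vanishing of the weight-$d$ Hochschild $H^1$ of $\G_m$ this gives $\underline{\smash{\Aut}}(G) \simeq G/Z(G) \simeq \G_a \stackrel{1}{\rtimes} \G_m$, smooth and connected, after which descent plus Hilbert 90 (multiplicative and additive) in the exact sequence of pointed sets finishes the argument. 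What each approach buys: yours is more conceptual and yields strictly more — the full automorphism group scheme and the fact that all automorphisms are inner — while the paper's is more elementary, needing only structure theory of groups over fields and avoiding the representability/descent formalities your reduction rests on. Two such formalities deserve a line in a complete write-up: (i) that $\G_{a,A}$ is characteristic in $G_A$ over an arbitrary (possibly non-reduced) base requires identifying the fppf derived subsheaf with $\G_{a,A}$, which uses that fppf-locally there is a unit $t$ with $t^d - 1$ invertible (over a field alone this would not preclude automorphisms moving $\G_a$, since $\Hom(\G_{a,A},\G_{m,A})$ is non-trivial over non-reduced $A$); and (ii) the reduction of forms trivialized by an arbitrary field extension to $k_s$-forms, via the Isom-scheme being a torsor under the smooth group $\underline{\smash{\Aut}}(G)$. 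Both are standard, and you flagged the essential one, so I consider the proposal sound.
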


\begin{proof}
Let $G$ be a form of $\G_a \stackrel{d}{\rtimes} \G_m$. It suffices to prove that if this form is trivialized by a finite field extension which is separable or purely inseparable then we already have $G \simeq \G_{a,k} \stackrel{d}{\rtimes} \G_{m,k}$.

\begin{mystep}
\item Assume that there exists a finite separable extension $K/k$ such that $G_K \simeq \G_{a,K} \stackrel{d}{\rtimes} \G_{m,K}$. The formation of the unipotent radical $\Ru{G}$ commutes with separable extensions (see \cite[prop. 1.1.9 p8]{CGP}) and $\G_a$ has no non-trivial forms which are trivialized by a finite separable extension, so $\Ru{G} \simeq \G_{a,k}$. Let $T$ be a maximal torus of $G$. Then $T_K$ is a maximal torus of $G_K$ (see \cite[cor. A.2.6 p.475]{CGP}), so $T$ is a form of $\G_m$. For dimension reasons, we have $G = \G_{a,k} \cdot T$. The intersection $\G_{a,k} \cap T$ is trivial because it is so after extension to $K$, so that $G$ is a semi-direct product $\G_{a,k} \rtimes T$.

The action of $T$ on $\G_{a,k}$ by conjugation gives, for every $k$-algebra $A$, a group morphism $T(A) \to \Aut_{A-gp}(\G_{a,A})$, functorially in $A$. If $k$ has characteristic zero then $\End_{A-gp}(\G_{a,A})$ is identified with $A$ where an element $\lambda \in A$ corresponds to the multiplication by $\lambda$; if $k$ has characteristic $p > 0$ then $\End_{A-gp}(\G_{a,A})$ is identified with the non-commutative ring $A[F]$ (with multiplication determined by the rule $F \lambda = \lambda^p F$) where $F : \fonction{\G_{a,A}}{\G_{a,A}}{x}{x^p}$ is the Frobenius endomorphism (see \cite[II, \textsection 3, prop. 4.4 p.196]{DEM}). Moreover the invertible elements of $A[F]$ are the invertible elements of $A$. Therefore, since $T$ acts non-trivially on $\G_{a,k}$ (otherwise $G$ would be commutative), the action gives a non-trivial morphism $T \to \G_{m,k}$. Hence $T$ has a non-trivial character over $k$, so $T = \G_{m,k}$. The weight of the action is invariant under field extensions, so we have $G \simeq \G_{a,k} \stackrel{d}{\rtimes} \G_{m,k}$.

\item Assume that there exists a finite purely inseparable extension $K/k$ such that $G_K \simeq \G_{a,K} \stackrel{d}{\rtimes} \G_{m,K}$. As above, a maximal torus $T$ of $G$ is a form of $\G_m$ , which is trivialized by $K/k$. Since the extension is purely inseparable, we have $T \simeq \G_{m,k}$. The quotient variety $G/T$ is a form of $\A^1$ and the kernel of the action of $G$ on this quotient is the subgroup $\mu_d$ of $T$.

Let us show that $G/T \simeq \A^1_k$. Every form of $\A^1$ which is trivialized by a separable extension is trivial, so we may assume that $k$ is separably closed. If for every $k$-rational point $x$ of $G/T$ the orbit morphism $T \to G/T$ were constant then, for $t \in T(k)$, the scheme $Z$ of the fixed points of $t$ (that is, the closed subscheme of $G/T$ representing the functor which associates with every $k$-scheme $S$ the set $\{x \in (G/T)(S) \mid tx=x\}$) would satisfy $Z(k) = (G/T)(k)$. But $G/T$ is reduced and hence $(G/T)(k)$ is dense in $G/T$, so we would have $Z = G/T$, in contradiction with the non-triviality of the action of $T$ on $G/T$. Thus, we can choose $x$ such that the orbit morphism is non-constant. This morphism extends to a non-constant morphism $\PP^1_k \to \widehat{G/T}$ where $\widehat{G/T}$ is the regular completion of the curve $G/T$. L\"uroth's theorem implies $\widehat{G/T} \simeq \PP^1_k$. Therefore we have $\A^1_k \setminus \{0\} = Tx \subsetneq G/T \subsetneq \PP^1_k$, so $G/T \simeq \A^1_k$.

The action of $G$ on $G/T \simeq \A^1_k$ extends to an action on the regular completion $\PP^1_k$ fixing the complement $\infty$ of $\A^1_k$. It corresponds to a morphism $f : G \to \Autgp{\PP^1_k,\infty} \simeq \G_{a,k} \rtimes \G_{m,k}$ whose kernel is equal to $\mu_d$. After extension to $K$, it becomes $f_K : \fonction{\G_{a,K} \stackrel{d}{\rtimes} \G_{m,K}}{\G_{a,K} \rtimes \G_{m,K}}{(a,t)}{(a,t^d)}$. Let $U = f^{-1}(\G_{a,k})$. Then $U_K = f_K^{-1}(\G_{a,K})$ is the subgroup $\G_{a,K}$ of $\G_{a,K} \stackrel{d}{\rtimes} \G_{m,K}$. So $U$ is a normal subgroup of $G$, the subgroup $U \cap T$ is trivial and we have $G = U \cdot T$. Moreover the kernel of the morphism $U \to \G_{a,k}$ induced by $f$ is $U \cap \mu_d$, which is trivial. So $U \simeq \G_{a,k}$ and, as above, $G \simeq \G_{a,k} \stackrel{d}{\rtimes} \G_{m,k}$.\qedhere
\end{mystep}
\end{proof}


\end{document}